\theoremstyle{plain}
\newtheorem{thm}{\protect\theoremname}
\theoremstyle{plain}
\theoremstyle{plain}
\theoremstyle{remark}
\theoremstyle{plain}
\newtheorem{lem}[thm]{\protect\lemmaname}
\theoremstyle{plain}
\newtheorem{prop}[thm]{\protect\propositionname}
\theoremstyle{remark}
\theoremstyle{remark}
\newtheorem{rem}[thm]{\protect\remarkname}
\theoremstyle{definition}
\newtheorem{defn}[thm]{\protect\definitionname}
\theoremstyle{definition}
\newtheorem{example}[thm]{\protect\examplename}
\theoremstyle{plain}
\newtheorem{cor}[thm]{\protect\corollaryname}
\theoremstyle{plain}
\newcommand{\cellsize}{20}
\newlength{\cellsz} \setlength{\cellsz}{\cellsize\unitlength}
\newsavebox{\cell}
\sbox{\cell}{\begin{picture}(\cellsize,\cellsize)
	\put(0,0){\line(1,0){\cellsize}}
	\put(0,0){\line(0,1){\cellsize}}
	\put(\cellsize,0){\line(0,1){\cellsize}}
	\put(0,\cellsize){\line(1,0){\cellsize}}
	\end{picture}}
\newcommand\cellify[1]{\def\thearg{#1}\def\nothing{}%
	\ifx\thearg\nothing
	\vrule width0pt height\cellsz depth0pt\else
	\hbox to 0pt{\usebox{\cell} \hss}\fi%
	\vbox to \cellsz{
		\vss
		\hbox to \cellsz{\hss$#1$\hss}
		\vss}}
\newcommand\tableau[1]{\vtop{\let\\\cr
		\baselineskip -16000pt \lineskiplimit 16000pt \lineskip 0pt
		\ialign{&\cellify{##}\cr#1\crcr}}}
\newcommand{\kellsize}{20}
\newlength{\kellsz} \setlength{\kellsz}{\kellsize\unitlength}
\newsavebox{\kell}
\sbox{\kell}{\begin{picture}(\kellsize,\kellsize)
	\put(0,0){\line(1,0){\kellsize}}
	\put(0,0){\line(0,1){\kellsize}}
	\put(\kellsize,0){\line(0,1){\kellsize}}
	\put(0,\kellsize){\line(1,0){\kellsize}}
	\end{picture}}
\newcommand\kellify[1]{\def\thearg{#1}\def\nothing{}%
	\ifx\thearg\nothing
	\vrule width0pt height\kellsz depth0pt\else
	\hbox to 0pt{\usebox{\kell} \hss}\fi%
	\vbox to \kellsz{
		\vss
		\hbox to \kellsz{\hss$#1$\hss}
		\vss}}
\newcommand\ktableau[1]{\vtop{\let\\\cr
		\baselineskip -16000pt \lineskiplimit 16000pt \lineskip 0pt
		\ialign{&\kellify{##}\cr#1\crcr}}}
\newcommand{\sellsize}{10}
\newlength{\sellsz} \setlength{\sellsz}{\sellsize\unitlength}
\newsavebox{\sell}
\sbox{\sell}{\begin{picture}(\sellsize,8)
	\put(0,0){\line(1,0){\sellsize}}
	\put(0,0){\line(0,1){\sellsize}}
	\put(\sellsize,0){\line(0,1){\sellsize}}
	\put(0,\sellsize){\line(1,0){\sellsize}}
	\end{picture}}
\newcommand\sellify[1]{\def\thearg{#1}\def\nothing{}%
	\ifx\thearg\nothing
	\vrule width0pt height\sellsz depth0pt\else
	\hbox to 0pt{\usebox{\sell} \hss}\fi%
	\vbox to \sellsz{
		\vss
		\hbox to \sellsz{\hss$#1$\hss}
		\vss}}
\newcommand\stableau[1]{\vtop{\let\\\cr
		\baselineskip -16000pt \lineskiplimit 16000pt \lineskip 0pt
		\ialign{&\sellify{##}\cr#1\crcr}}}
\newcommand{\ssellsize}{5}
\newlength{\ssellsz} \setlength{\ssellsz}{\ssellsize\unitlength}
\newsavebox{\ssell}
\sbox{\ssell}{\begin{picture}(\ssellsize,4)
	\put(0,0){\line(1,0){\ssellsize}}
	\put(0,0){\line(0,1){\ssellsize}}
	\put(\ssellsize,0){\line(0,1){\ssellsize}}
	\put(0,\ssellsize){\line(1,0){\ssellsize}}
	\end{picture}}
\newcommand\ssellify[1]{\def\thearg{#1}\def\nothing{}%
	\ifx\thearg\nothing
	\vrule width0pt height\sellsz depth0pt\else
	\hbox to 0pt{\usebox{\ssell} \hss}\fi%
	\vbox to \ssellsz{
		\vss
		\hbox to \ssellsz{\hss$#1$\hss}
		\vss}}
\newcommand\sstableau[1]{\vtop{\let\\\cr
		\baselineskip -16000pt \lineskiplimit 16000pt \lineskip 0pt
		\ialign{&\ssellify{##}\cr#1\crcr}}}
\providecommand{\claimname}{\inputencoding{latin9}Claim}
\providecommand{\conjecturename}{\inputencoding{latin9}Conjecture}
\providecommand{\corollaryname}{\inputencoding{latin9}Corollary}
\providecommand{\definitionname}{\inputencoding{latin9}Definition}
\providecommand{\examplename}{\inputencoding{latin9}Example}
\providecommand{\lemmaname}{\inputencoding{latin9}Lemma}
\providecommand{\notename}{\inputencoding{latin9}Note}
\providecommand{\propositionname}{\inputencoding{latin9}Proposition}
\providecommand{\questionname}{\inputencoding{latin9}Question}
\providecommand{\remarkname}{\inputencoding{latin9}Remark}
\providecommand{\theoremname}{\inputencoding{latin9}Theorem}
\providecommand{\problemname}{\inputencoding{latin9}Problem}
\newenvironment{nouppercase}{%
	\renewcommand{\uppercasenonmath}[1]{}}{}
\newcommand\twoheaduparrow{\mathrel{\rotatebox{90}{$\twoheaduparrow$}}}
\newcommand\twoheaddownarrow{\mathrel{\rotatebox{270}{$\twoheaddownarrow$}}}
\begin{document}
	\author{Nick Early}
	\thanks{Perimeter Institute for Theoretical Physics \\
		email: \href{mailto:earlnick@gmail.com}{earlnick@gmail.com}}

	\title[Weakly separated collections and matroid subdivisions]{From weakly separated collections to matroid subdivisions}

\begin{nouppercase}
	\maketitle
\end{nouppercase}
	\begin{abstract}
		We study arrangements of slightly skewed tropical hyperplanes, called blades by A. Ocneanu, on the vertices of a hypersimplex $\Delta_{k,n}$, and we investigate the resulting induced polytopal subdivisions.  We show that placing a blade on a vertex $e_J$ induces an $\ell$-split matroid subdivision of $\Delta_{k,n}$, where $\ell$ is the number of cyclic intervals in the $k$-element subset $J$.  We prove that a given collection of $k$-element subsets is weakly separated, in the sense of the work of Leclerc and Zelevinsky on quasicommuting families of quantum minors, if and only if the arrangement of the blade $((1,2,\ldots, n))$ on the corresponding vertices of $\Delta_{k,n}$ induces a matroid (in fact, a positroid) subdivision.  In this way we obtain a compatibility criterion for (planar) multi-splits of a hypersimplex, generalizing the rule known for 2-splits.  We study in an extended example a matroidal arrangement of six blades on the vertices $\Delta_{3,7}$.

	\end{abstract}
	
	\begingroup
	\let\cleardoublepage\relax
	\let\clearpage\relax
	\tableofcontents
	\endgroup

	\section{Introduction}
	In this paper, we introduce a refinement of the notion of a tropical hyperplane arrangement: the \textit{matroidal blade arrangement}.  The prototypical example of a blade, denoted $((1,2,\ldots, n))$, is the polyhedral complex which is obtained by gluing together the $\binom{n}{2}$ simplicial codimension 1 cones with generating edges a cyclic system of roots $e_i-e_{i+1}$, which partition $\mathbb{R}^{n-1}$ into $n$ maximal cells.  More compactly, it is the union of the facets of the normal fan to the fundamental Weyl alcove.  A precise definition is given in Section \ref{sec:Matroid subdivisions and matroidal blade arrangements}; see also Figure \ref{fig:blade3coordinates000}.  In this paper, by a \textit{blade arrangement} we mean a superposition of a number of copies of $((1,2,\ldots, n))$ on the vertices of a given hypersimplex $\Delta_{k,n}$, where $2\le k\le n-2$.
	
	We will see that any blade arrangement induces an (in general not matroidal, possibly trivial) subdivision.  Sometimes this subdivision is matroidal; in fact, we establish an equivalence between weakly separated collections and a particularly well-behaved subclass of blade arrangements, where the maximal cells are, borrowing terminology from physics, \textit{planar} in nature: they are always positroid polytopes with respect to the given cyclic order.  In this work, our use of blades and their arrangements has motivation coming from several areas, including tropical geometry, moduli spaces and matroid subdivisions \cite{ArdilaBilley,BandeltDress,TropicalConvexity,FraserLe,Horn,Lafforgue,OlarteLocalDressians,Schroter,Speyer2008,Speyer2009} and convex geometry and combinatorics (specifically plabic graphs and alcoved polytopes) \cite{GalashinPostnikov,GalashinPostnikovWilliams,KarpmanKodama,AlcovedPolytopes,LamPostnikov}.
		
		It is also motivated by recent work \cite{TropGrassmannianScattering} of Cachazo, Guevara and Mizera and the author, to generalize the Cachazo-He-Yuan (CHY) formulation \cite
		{CHY1,Cachazo Scattering 2 CHY} of Quantum Field Theory from $\mathbb{CP}^1$ to $\mathbb{CP}^{k-1}$, establishing close contact with the tropical Grassmannian, and in particular the positive tropical Grassmannian.  See also \cite{CachazoRojas,TropicalGrassmannianCluster Drummond,SoftTheoremtropGrassmannian}.  An expanded discussion is included in Section \ref{sec: physical motivation}.

	Blades were first defined by Ocneanu in \cite{OcneanuVideo}; see Definition \ref{defn:blade} and Proposition \ref{prop: blades Minkowski sum} for precise statements, and our previous work \cite{EarlyBlades}, where their study was initiated.  In particular, there it was shown that blades satisfy a Minkowski sum decomposition akin to that of polyhedral cones; additionally a filtered \textit{basis} was introduced which has some intriguing enumerative properties involving a conjecturally symmetric unimodal generating function.  The maximal cells of the subdivision induced by a blade coincide with the tangent cones to faces of the permutohedron \cite{PostnikovPermutohedra}; and they are closely related to the family of polytopes studied by Pitman-Stanley \cite{PitmanStanley}. 
	
	For any $n$-cycle $(\sigma_1,\ldots, \sigma_n)$, the blade $((\sigma_1,\ldots, \sigma_n))$ is isomorphic to the tropical hyperplane defined by $\min\{x_1,\ldots, x_n\}$ (see Proposition \ref{prop: cyclically nonplanar finest refinement}), lifted to some affine hyperplane section where $x_1+\cdots x_n=r$, over the tropical torus $\mathbb{R}^n\slash (1,1,\ldots, 1)\mathbb{R}$, such that all of its edges are ``twisted'' in the direction of roots $e_i-e_j$, rather than in the coordinate directions $e_i$.  This twist will turn out to be exactly what is necessary to guarantee compatibility with constructions involving matroid subdivisions and matroids more generally.  In this paper we study certain restricted arrangements of the blade $((1,2,\ldots, n))$ on the vertices $e_{I_1},\ldots, e_{I_m}$ of hypersimplices 
	$$\Delta_{k,n} = \left\{x\in \lbrack 0,1\rbrack^n : \sum_{i=1}^n x_i=k\right\},$$
	for $k=2,\ldots, n-2$.

	Any such an arrangement induces a (possibly trivial) subdivision of the hypersimplex $\Delta_{k,n}$ into polytopes $\Pi_1,\ldots,\Pi_t$ whose facet inequalities are of the form $x_{i+1}+\cdots +x_j = r_{ij}$; such a polytope is isomorphic to an alcoved polytope, in the sense of \cite{AlcovedPolytopes}.
	
	We shall require all polytopes $\Pi_i$ to be matroid (in particular positroid) polytopes; in this case we call the arrangement of blades \textit{matroidal}.  In fact, in Theorem \ref{thm: weakly separated matroid subdivision blade} we prove that this condition is equivalent to asking that the vertices $e_{I_1},\ldots, e_{I_m}$ define a weakly separated collection of $k$-element subsets $\{I_1,\ldots, I_m\}$.

	The basic example of a blade arrangement that is not matroidal is given in Example \ref{example: octahedral nonmatroidal blade arrangement}, where the octahedron $\Delta_{2,4}$ is fully triangulated by inducing a subdivision from an arrangement of two blades that induce two incompatible 2-splits; these two blades are arranged on the vertices respectively $e_{1}+e_3$ and $e_{2}+e_4$, of $\Delta_{2,4}$.  The key feature which we point out is that, as a pair, the vertices fail to be \textit{weakly separated.}  In the usual geometric interpretation for $k$-element subsets, c.f. \cite{WeakSeparationPostnikov}, two $k$ element subsets $I$ and $J$ are weakly separated if there exists a chord separating the sets $I\setminus J$ and $J\setminus I$ when drawn on a circle.  
	In Section \ref{sec:Permutohedral blades and alcoved polytopes} we recall basic properties of blades from \cite{EarlyBlades}, and show that they are isomorphic to tropical hyperplanes; their explicit presentation as tropical hypersurfaces is given.  Then we introduce planar matroid subdivisions and show that they are refined by the alcove triangulation from \cite{AlcovedPolytopes}.  Section \ref{sec:Matroid subdivisions and matroidal blade arrangements} contains the first main result, Theorem \ref{thm: multisplit blade}.  
	
	This paper originated as an outgrowth of efforts to develop a systematic understanding of the $\mathbb{CP}^{k-1}$ generalization \cite{TropGrassmannianScattering} of the scattering equations \cite{CHY1} building on the existing explicit results for $k=3$ and $n=6,7,8$, see \cite{CachazoRojas,TropicalGrassmannianCluster Drummond,SoftTheoremtropGrassmannian}.
	
	To that end, we point out that Lemma \ref{lem: to the boundary hypersimplex} could be used to compute, for each maximal weakly separated collection of vertices in $\Delta_{3,n}$, a tree arrangement in the sense of \cite{Hermann How to draw}.  We illustrate how this could work in Section \ref{sec: tree arrangements}.  

	\subsection{Physical Motivation}\label{sec: physical motivation}

	Let us explain more carefully how this work came about, as motivated by recent developments in theoretical particle physics.  In this context, the starting point is the Cachazo-He-Yuan (CHY) formalism \cite{CHY1}, which has at its heart the \textit{scattering equations},
	$$\sum_{b\not=a}\frac{s_{a,b}}{\sigma_a - \sigma_b} = 0,$$
	for each $a=1,\ldots, n$, having fixed values of the kinematic data, with \textit{Mandelstam invariants} $(s_{a,b})$, satisfying $s_{a,a}=0$ and $\sum_{a=1}^n s_{a,b}=0$.  These form a system of equations on the configuration space of $n$ punctures $\sigma_1,\ldots, \sigma_n \in \mathbb{CP}^1$ modulo projective equivalence, that is, on the moduli space of $n$-pointed stable curves $\mathcal{M}_{0,n}$.  The scattering equations lead to compact formulas for the calculation of scattering amplitudes for a large number of quantum field theories, including the biadjoint scalar, the nonlinear sigma model, special Galileon, and Yang-Mills \cite{CHYQFTs}.  In \cite{TropGrassmannianScattering}, Cachazo, Guevara, Mizera and the author (CEGM) completed the CHY formalism, with a generalization of the scattering equations, replacing $\mathbb{CP}^1$ with $\mathbb{CP}^{k-1}$ for all $k\ge 2$.  Additionally, CEGM introduced the so-called generalized biadjoint scalar partial amplitudes, certain $\binom{n}{k}-n$ parameter rational functions, denoted $m^{(k)}_n(\alpha,\beta)$ for all $(k,n)$ satisfying $2\le k\le n-2$, for pairs of cyclic orders $\alpha,\beta$ on $\{1,\ldots, n\}$.  The first cases of $m^{(k)}_n(\alpha,\alpha)$ (or for short $m^{(k)}_n$) with $\alpha = (12\cdots n)$ had been evaluated in within the span of about eight months, including for $$(k,n) \in \{(3,6),(3,7),(3,8),(3,9),(4,8),(4,9)\},$$
	using a second formulation of $m^{(k)}_n$ in terms of certain Generalized Feynman Diagrams \cite{CachazoBorges,CGUZ}, as implemented using arrays of metric trees.  See \cite{SongAmplitudes} for a different approach, and more data.  Here $m^{(k)}_n$ is a rational function of homogeneous degree $-(k-1)(n-k-1)$ in a certain collection of $\binom{n}{k}-n$ Mandelstam invariants $s_J$, for $k$-element subsets $J$ of $\{1,\ldots, n\}$.  The systematic evaluation of $m^{(k)}_n$ in general for any $k\ge 3$ is a seemingly fantastically difficult problem due to the inherent complexity of the locus of singularities, which are known to be intimately connected to the rays of the positive tropical Grassmannian $\text{Trop}^+G(k,n)$.  There is a pressing need to develop a systematic approach to calculate $m^{(k)}_n$ for $k\ge 3$ and $n$ large, but it is quite a formidable problem using existing techniques.

	The aim of this work is to introduce some combinatorial techniques that will help investigate the properties of $m^{(k)}_n$.  Let us sketch a brief outline of what such a systematic approach might look like.
	
	By translating the height functions $\mathfrak{h}_{((1,2,\ldots, n))}$ introduced in Section \ref{sec: bladesInTropicalGeometry} around the vertex set of $\Delta_{k,n}$, one constructs the \textit{planar basis} \cite{EarlyPlanarBasis} 
	$$\left\{\eta_{J}(s): J = \{j_1,\ldots, j_k\} \text{ is nonfrozen}\right\}$$
	of linear functions $\eta_J(s)$ on the kinematic space (here $m^{(k)}_n$ is a homogeneous rational function on the kinematic space).  The kinematic space is a codimension n subspace of isomorphic to $\mathbb{R}^{\binom{n}{k}}$; it carries information about the scattering process and that $m^{(k)}_n$ is a function on this space.  The functions $\eta_J$ help to organize the poles of $m^{(k)}_n$ more systematically; in fact, on any hyperplane of the form $\eta_J(s) = 0$ one encounters a pole.  Now $\eta_J$ is generically dual to the height function over the vertices of $\Delta_{k,n}$ which is linear except on the blade $((1,2,\ldots, n))$, translated to the vertex $e_{j_1} + \cdots + e_{j_k}$ of $\Delta_{k,n}$.  For example, one of the coarsest positroidal subdivisions of $\Delta_{3,6}$, having exactly three maximal cells, is induced by the translation of the height function $h_{((1,2,3,4,5,6))}$ to the vertex $e_2+e_4+e_6$ of $\Delta_{3,6}$, so that the height of a given vertex $e_i+e_j+e_k$ maps onto the coefficient of $s_{ijk}$ in 
	$$\eta_{246}(s) = \frac{1}{6} \left(6 s_{123}+5 s_{124}+4 s_{125}+\cdots + s_{245}+5 s_{256}+6 s_{345}+5 s_{346}+4 s_{356}+3 s_{456}\right),$$
	and one can check that modulo the relations induced by lineality (known in physics as generalized momentum conservation),
	$$\sum_{\{i,j,k\}\ni a} s_{ijk} = 0$$
	for each $a=1,\ldots, n$, then one has the more familiar expression discovered in \cite{TropGrassmannianScattering},
	$$\eta_{246} =s_{156}+s_{256}+s_{345}+s_{346}+s_{356}+s_{456}.$$
	Here the $s_{ijk}$'s are parameters, called Mandelstam invariants, as $\{i,j,k\}$ runs over all 3-element subsets of $\{1,\ldots, n\}$.
	
	This work had an empirical origin, going back to numerical computations in \cite{TropGrassmannianScattering}, for $m^{(3)}_6$.  First, when the kinematic data $(s)$ approaches the hyperplanes $\eta_{ijk}(s) = 0$ in the kinematic space, then invoking the CEGM formula one finds that the generalized amplitude $m^{(3)}_6$ develops a singularity.  Among the first examples that one encounters is the residue
	\begin{eqnarray}\label{eq:3n3split}
	\text{Res}\lbrack (m^{(3)}_6(\mathbb{I},\mathbb{I}))\rbrack_{\eta_{246} =0} & = &  \left(\frac{1}{\eta _{236}}+\frac{1}{\eta _{124}}\right) \left(\frac{1}{\eta _{256}}+\frac{1}{\eta _{146}}\right) \left(\frac{1}{\eta _{346}}+\frac{1}{\eta _{245}}\right).
	\end{eqnarray}
	That the residue is a product of not two, but \textit{three} terms, is a novel and intriguing behavior from a physical point of view; moreover, a close examination of the eight terms in the expansion of the product in Equation \eqref{eq:3n3split}, leads to identification of eight weakly separated collections, each of size three, all of which form weakly separated collections with $\{2,4,6\}$.  In particular, one derives that if $\{2,4,6\}$ and some triple $\{a,b,c\}$ are not weakly separated, that the residue $\text{Res}\lbrack (m^{(3)}_6(\mathbb{I},\mathbb{I}))\rbrack_{(\eta_{246},\eta_{abc}) = (0,0)}$ is identically zero.  Could this be explained?  Is there a more general statement?  Such questions were the starting point for this work. 
	
	From a physical point of view, the motivation for the present work can be summarized in the following prediction: if CEGM amplitudes $m^{(k)}_n$ have a physical meaning then weak separation becomes the correct generalization of the constraints that unitarity and locality impose on standard QFT regarding compatible poles.

	\subsection{Summary of Results}

	This paper has two main results.  In Section \ref{sec:Matroid subdivisions and matroidal blade arrangements}, Theorem \ref{thm: multisplit blade} describes the matroid subdivision on $\Delta_{k,n}$ which is induced when a single blade is placed on a vertex $e_J$ of $\Delta_{k,n}$.  Here it is shown that the subdivision is trivial precisely for vertices which correspond to frozen subsets, those which are cyclic intervals $J=\{i,i+1,\ldots, i+(k-1)\}$.  Our second main result is in Section 4, where Theorem \ref{thm: weakly separated matroid subdivision blade} shows that an arrangement of blades on the vertices of $\Delta_{k,n}$ induces a matroid subdivision if and only if the vertices define a weakly separated collection.  In the Appendix we list the numbers of maximal weakly separated collections through n=12, though some of the middle values of $k$ were impossible to achieve using our methods.

	\section{Blades and alcoved polytopes}\label{sec:Permutohedral blades and alcoved polytopes}
	Let $\mathcal{H}_{k,n}$ be the affine hyperplane in $\mathbb{R}^n$ where $\sum_{i=1}^n x_i=k$.  For integers $1\le k\le n-1$, denote by $\Delta_{k,n} = \left\{x\in \lbrack 0,1\rbrack^n:\sum_{j=1}^n x_j=k \right\}$ the $k^\text{th}$ hypersimplex of dimension $n-1$.  For a subset $J\subseteq \{1,\ldots, n\}$, denote $x_J = \sum_{j\in J} x_j$, and similarly for basis vectors, $e_J = \sum_{j\in J} e_j$.

	In \cite{OcneanuVideo}, A. Ocneanu introduced plates and blades, as follows.
	\begin{defn}[\cite{OcneanuVideo}]\label{defn:blade}
		A decorated ordered set partition $((S_1)_{s_1},\ldots, (S_\ell)_{s_\ell})$ of $(\{1,\ldots, n\},k)$ is an ordered set partition $(S_1,\ldots, S_\ell)$ of $\{1,\ldots, n\}$ together with an ordered list of integers $(s_1,\ldots, s_\ell)$ with $\sum_{j=1}^\ell s_j=k$.  It is said to be of type $\Delta_{k,n}$ if we have additionally $1\le s_j\le\vert S_j\vert-1 $, for each $j=1,\ldots, \ell$.  In this case we write $((S_1)_{s_1},\ldots, (S_\ell)_{s_\ell}) \in \text{OSP}(\Delta_{k,n})$, and we denote by $\lbrack (S_1)_{s_1},\ldots, (S_\ell)_{s_\ell}\rbrack$ the convex polyhedral cone in $\mathcal{H}_{k,n}$, that is cut out by the facet inequalities
		\begin{eqnarray}\label{eq:hypersimplexPlate}
		x_{S_1} & \ge & s_1 \nonumber\\
		x_{S_1\cup S_2} & \ge & s_1+s_2\nonumber\\
		& \vdots & \\
		x_{S_1\cup\cdots \cup S_{\ell-1}} & \ge & s_1+\cdots +s_{\ell-1}.\nonumber
		\end{eqnarray}
		These cones were studied as \textit{plates} by Ocneanu.
		Finally, the \textit{blade} $(((S_1)_{s_1},\ldots, (S_\ell)_{s_\ell}))$ is the union of the codimension one faces of the complete simplicial fan formed by the $\ell$ cyclic block rotations of $\lbrack (S_1)_{s_1},\ldots(S_\ell)_{s_\ell},\rbrack$, that is
		\begin{eqnarray}\label{eq: defn blade}
		(((S_1)_{s_1},\ldots, (S_\ell)_{s_\ell})) = \bigcup_{j=1}^\ell \partial\left(\lbrack (S_j)_{s_j},(S_{j+1})_{s_{j+1}},\ldots, (S_{j-1})_{s_{j-1}}\rbrack\right).
		\end{eqnarray}
	\end{defn}
\begin{rem}
	When there is no risk of confusion, depending on the context we shall use the notation $\lbrack (S_1)_{s_1},\ldots, (S_\ell)_{s_\ell}\rbrack$ for the cone in $\mathcal{H}_{k,n}$ or for the matroid polytope obtained by intersecting it with the hypersimplex $\Delta_{k,n}$.
\end{rem}

For an interesting use of decorated ordered set partitions that links combinatorics and convex geometry, see \cite{DonghyunKim}, where a proof was given for a conjecture of the author \cite{EarlyHStar} for a combinatorial interpretation of the $h^\star$-vector of generalized hypersimplices, in terms of decorated ordered set partitions.

\begin{figure}[h!]
	\centering
	\includegraphics[width=0.89\linewidth]{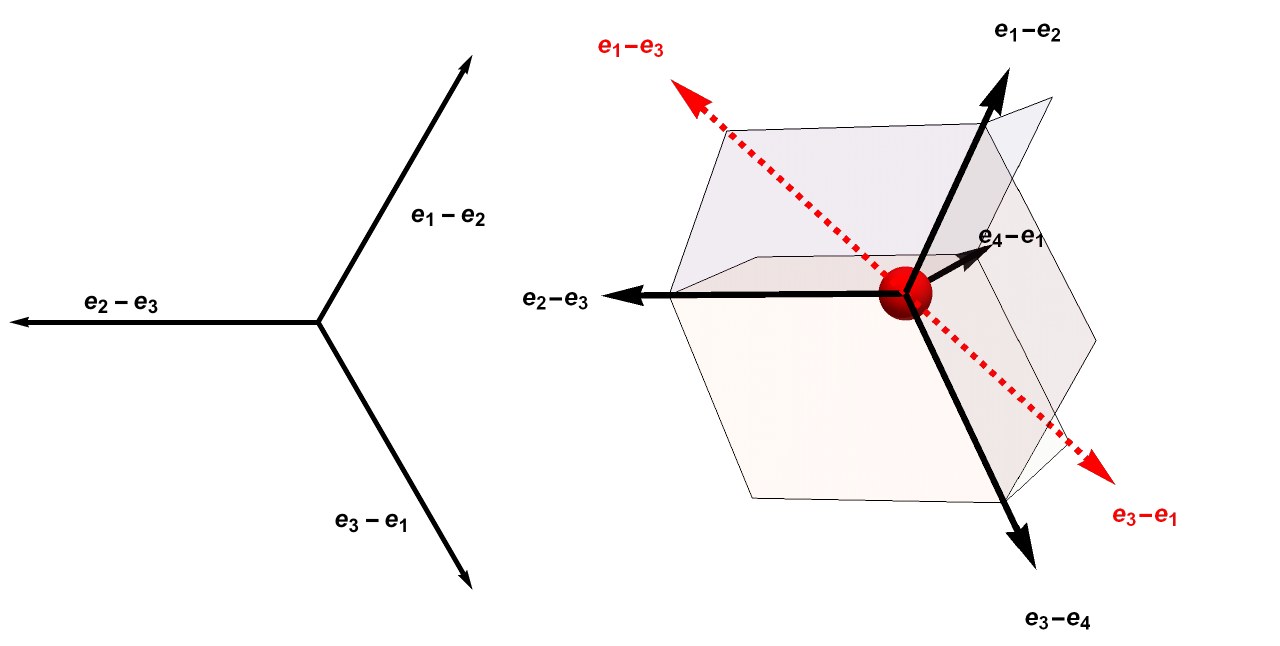}
	\caption{Left: the tripod $((1,2,3))$.  Right: the blade $((1,2,3,4))$ as a Minkowski sum of the two tripods $((1,2,3))$ and $((1,3,4))$.  See \cite{EarlyBlades} for the proof that such a decomposition exists in general.}
	\label{fig:blade3coordinates000}
\end{figure}

	\begin{prop}[\cite{EarlyBlades}]\label{prop: blades Minkowski sum}
		Removing one Minkowski summand at a time from each of the $\ell$ cyclic block rotations of $\lbrack S_1,\ldots, S_\ell\rbrack$ leads to a rewriting of Equation \eqref{eq: defn blade} using Minkowski sums, as
		\begin{eqnarray*}
			((S_1,\ldots, S_\ell)) & = & \bigcup_{1\le i<j\le \ell} \lbrack S_1,S_{2}\rbrack \boxplus\cdots  \lbrack \widehat{S_{i},S_{i+1}} \rbrack \boxplus\cdots \boxplus\widehat{\lbrack S_{j},S_{j+1\rbrack}}\boxplus\cdots \boxplus\lbrack S_\ell,S_1\rbrack,
		\end{eqnarray*}
		where we use the notation
		$$\lbrack S_i,S_j\rbrack = \left\{x\in \mathcal{H}_{0,n}:x_{S_i}\ge 0,\ x_{S_i\cup S_j}=0,\ x_\ell=0 \text{ whenever } \ell\not\in S_i\cup S_j \right\}$$
		for disjoint (nonempty) subsets $S_i,S_j$ of $\{1,\ldots, n\}$.
	\end{prop}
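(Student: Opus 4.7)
The plan has three parts, the first geometric and the last two combinatorial. To avoid carrying the block-wise zero-sum lineality subspace $L = \{x \in \mathcal{H}_{0,n} : x_{S_i} = 0 \text{ for all } i\}$ through the argument, I would pass to the quotient $\mathcal{H}_{0,n}/L$, in which each pair cone $\lbrack S_m, S_{m+1}\rbrack$ descends to a single ray in direction $e_{S_m}/\vert S_m\vert - e_{S_{m+1}}/\vert S_{m+1}\vert$ and each plate rotation becomes a simplicial cone of dimension $\ell - 1$. With all indices cyclic modulo $\ell$, my first claim is that the $j$-th cyclic rotation of the plate, after translating by the common vertex $v = \sum_{i=1}^\ell (s_i/\vert S_i\vert) e_{S_i}$ and passing to the quotient, coincides with $\lbrack S_j, S_{j+1}\rbrack \boxplus \lbrack S_{j+1}, S_{j+2}\rbrack \boxplus \cdots \boxplus \lbrack S_{j-2}, S_{j-1}\rbrack$. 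To verify this I would check that the $\ell-1$ ray directions are linearly independent in the quotient and that the facet inequalities of their positive span match the plate's defining inequalities $x_{S_j \cup \cdots \cup S_{j+k}} \ge s_j + \cdots + s_{j+k}$ after translation by $v$.

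Second, since the plate is simplicial in the quotient, its boundary decomposes as the union of the $\ell - 1$ facets obtained by deleting one extremal ray, equivalently by omitting one Minkowski summand $\lbrack S_m, S_{m+1}\rbrack$, an operation which saturates exactly the corresponding plate inequality. Third, I would collate facets over all $\ell$ cyclic plate rotations. The $j$-th rotation already omits the pair $(S_{j-1}, S_j)$ from its Minkowski decomposition, so a facet appearing in the overall union corresponds to an unordered pair $\{(S_a, S_{a+1}), (S_b, S_{b+1})\}$ of absent pair cones: one missing by virtue of the rotation, one removed to form the facet. Each such unordered pair arises from exactly two rotations, namely those with $j = a+1$ and $j = b+1$ cyclically, so every facet is double-counted; re-indexing by $1 \le i < j \le \ell$ with absent pairs $(S_i, S_{i+1})$ and $(S_j, S_{j+1})$ collapses the double count and reproduces the formula of the proposition.

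The main obstacle lies in Step 1, namely the interplay between lineality and cone structure. Working in $\mathcal{H}_{0,n}/L$ is the key technical maneuver to obtain a clean simplicial cone of dimension $\ell - 1$; without this reduction, each pair cone $\lbrack S_m, S_{m+1}\rbrack$ carries its own partial lineality of dimension $\vert S_m\vert + \vert S_{m+1}\vert - 2$, and verifying that the Minkowski sum of $\ell - 1$ of them reproduces the plate exactly requires a careful dimension count and an inequality-by-inequality verification. Once this geometric foundation is in hand, Steps 2 and 3 amount to combinatorial bookkeeping, and the double-counting argument in Step 3 becomes a straightforward consequence of the cyclic structure.
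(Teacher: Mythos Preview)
First, note that the paper does not supply its own proof of this proposition: it is quoted from \cite{EarlyBlades} with no argument, so there is nothing in-paper to compare against. Your quotient strategy is a natural one and does correctly establish the identity in $\mathcal{H}_{0,n}/L$: there the $\ell$ pair cones $\lbrack S_m,S_{m+1}\rbrack$ descend to rays forming a single positive circuit, each cyclic plate rotation becomes the simplicial cone on $\ell-1$ consecutive rays, its boundary is the union of the subcones obtained by dropping one further ray, and your double-counting over rotations correctly collapses the union to the $\binom{\ell}{2}$ terms indexed by $1\le i<j\le \ell$.

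The gap is the passage back from the quotient to $\mathcal{H}_{0,n}$, which you invoke precisely in order to suppress but never actually discharge. The blade is $L$-invariant (its defining inequalities involve only the block sums $x_{S_m}$), but a Minkowski sum of $\ell-2$ of the pair cones need not be: when the two omitted summands are cyclically adjacent, say $\lbrack S_i,S_{i+1}\rbrack$ and $\lbrack S_{i+1},S_{i+2}\rbrack$, the block $S_{i+1}$ appears in no surviving summand, so the sum carries no lineality in the $S_{i+1}$-directions and has dimension $n-1-\vert S_{i+1}\vert$, strictly below the dimension $n-2$ of the corresponding blade facet whenever $\vert S_{i+1}\vert\ge 2$. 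Already for $\ell=3$ with non-singleton blocks the right-hand side of the proposition reduces to $\lbrack S_1,S_2\rbrack\cup\lbrack S_2,S_3\rbrack\cup\lbrack S_3,S_1\rbrack$, each piece of dimension at most $n-3$, whereas the blade is pure of dimension $n-2$. The genuine upstairs facets are the products $\lbrack S_{i+1},\ldots,S_j\rbrack\boxplus\lbrack S_{j+1},\ldots,S_i\rbrack$ of two sub-plates, as in the final display of the proof of Proposition~\ref{prop: blade trop variety}; your pair-cone expression matches this only when every $S_m$ is a singleton (so $L=0$). Thus your argument proves the identity modulo $L$, but to conclude it in $\mathcal{H}_{0,n}$ as literally stated you must either restrict to singleton blocks or explicitly supply the missing lineality factor in the adjacent-drop terms.
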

	\begin{rem}
	In Proposition \ref{prop: blades Minkowski sum} we saw that the blade $((S_1,\ldots, S_\ell))$, where by convention we put $s_1,\ldots, s_\ell=0$, can be conveniently expressed as a union of Minkowski sums.  Related expressions have appeared in the work \cite{DeltaAlgebra} of F. Cachazo, A. Guevara, S. Mizera and the author in the context of $k=2$ leading singularities, where each $\lbrack i,j\rbrack$ is replaced by a Grassmann-valued rational function
	$$u_{ij} = \frac{(\theta_i-\theta_j)(\theta_{i+n} - \theta_{j+n})}{x_i-x_j}.$$
	Here $\theta_1,\ldots, \theta_{2n}$ are anticommuting Grassmann variables.  Then the analog of the tripod $((a,b,c))$ is the sum 
	$$\Delta_{abc} = u_{ab} + u_{bc} + u_{ca}.$$
	The analog of a blade $((1,2,\ldots, n))$ is the product
	$$\Delta_{123}\Delta_{134}\cdots \Delta_{1,n-1,n} = \Delta_{T_1}\cdots \Delta_{T_{n-2}} = \frac{1}{(n-2)!}\left(u_{12}+u_{23} + \cdots + u_{n1}\right)^{n-2}$$
	$$= \sum_{1\le i<j\le n} u_{12}\cdots \widehat{u_{i,i+1}}\cdots \widehat{u_{j,j+1}}\cdots u_{n1},$$	
	where $\{T_1,\ldots, T_{n-2}\}$ is any (oriented) triangulation of a polygon with vertices cyclically labeled $\{1,\ldots, n\}$.  These identities are easy to check using basic properties of the $\Delta_{abc}$.  See \cite{DeltaAlgebra} for details; see also \cite{EarlyReiner} for connections to representation theory and topology.
\end{rem}
\subsection{Blades are tropical hypersurfaces}\label{sec: bladesInTropicalGeometry}
Recall now that a \textit{tropical polynomial function} $p:\mathbb{R}^n\rightarrow\mathbb{R}$ is the minimum of a finite set of linear functions,
$$p(x) = \text{min} \left\{f_1(x),f_2(x),\ldots, f_m(x)\right\},$$
whose graph $Z(p)$ is a piecewise-linear hypersurface in $\mathbb{R}^{n+1}$.  As such, the normal vector to $Z(p)$ changes direction only across a set of codimension 1 ``cracks'' in $\mathbb{R}^n$.  The collection of such cracks is called a \textit{tropical hypersurface} $\mathcal{V}(p),$ consisting of points in $\mathbb{R}^n$ such that the minimum value is achieved by at least two of the linear forms $f_i$.  For details and references of this and related constructions in tropical geometry, see for instance \cite{Sturmfels Speyer Tropical intro}.

In what follows we prove that any given blade $((\mathbf{S})) = ((S_1,\ldots, S_\ell))\subset \mathcal{H}_{0,n}$, it is equal to the tropical hypersurface defined by the tropical polynomial 
$$\mathfrak{h}_{((\mathbf{S}))}(x):=\text{min} \left\{L_1(x),\ldots, L_\ell(x)\right\},$$
where $L_i(x) = x_{S_{i+1}} +2x_{S_{i+2}}+\cdots +(\ell-1)x_{S_{i-1}}$.

\begin{prop}
	The blade $((1,2,\ldots, n))$ is isomorphic to the tropical hyperplane defined by the bends of the function $p:\mathcal{H}_{0,n}\rightarrow \mathbb{R}$,
	$$p(y) = \min\{y_1,\ldots, y_n\}.$$
\end{prop}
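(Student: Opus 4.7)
The plan is to construct an explicit linear isomorphism of $\mathcal{H}_{0,n}$ that carries the tropical hypersurface $\mathcal{V}(p)$ (i.e.\ the locus where $p(y)=\min\{y_1,\ldots,y_n\}$ is attained by at least two coordinates) onto the blade $((1,2,\ldots,n))$, and then verify the match plate-by-plate. Concretely, I would introduce the ``discrete derivative''
$$\phi:\mathcal{H}_{0,n}\longrightarrow \mathcal{H}_{0,n},\qquad \phi(y)_i=y_i-y_{i-1},$$
with the cyclic convention $y_0:=y_n$. The coordinates of $\phi(y)$ telescope to $0$, and the kernel of $\phi$ on $\mathcal{H}_{0,n}$ consists of constant vectors of zero sum, hence is trivial; so $\phi$ is a linear automorphism of $\mathcal{H}_{0,n}$.

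The key computation is to identify, for each $i\in\{1,\ldots,n\}$, the preimage under $\phi$ of the plate $[(i)_0,(i+1)_0,\ldots,(i-1)_0]$. Setting $x=\phi(y)$, each defining inequality
$$x_i+x_{i+1}+\cdots+x_{i+r-1}\ \ge\ 0\qquad (r=1,\ldots,n-1)$$
telescopes to $y_{i+r-1}-y_{i-1}\ge 0$. As $r$ ranges over $\{1,\ldots,n-1\}$, the index $i+r-1$ sweeps out every residue mod $n$ except $i-1$, so the full system is equivalent to the single geometric condition that $y_{i-1}=\min\{y_1,\ldots,y_n\}$. Hence $\phi$ sends the closed region $R_i=\{y:y_{i-1}=\min_j y_j\}$ onto the plate, and the $n$ regions $R_1,\ldots,R_n$ tile $\mathcal{H}_{0,n}$ in step with the $n$ cyclic block rotations of the plate.

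Finally, by Definition \ref{defn:blade}, the blade $((1,2,\ldots,n))$ is the union over $i$ of the boundaries of these plates, so under $\phi$ it is the image of the union of boundaries of the $R_i$. A point lies in such a boundary precisely when the minimum of $\{y_1,\ldots,y_n\}$ is attained at $y_{i-1}$ together with at least one other coordinate, and taking the union over $i$ yields exactly the locus where the minimum is attained at least twice, i.e.\ $\mathcal{V}(p)$. Thus $\phi$ provides the desired isomorphism. The only mild obstacle is to keep the cyclic indexing straight in the telescoping step; once that is done the rest of the argument is purely formal.
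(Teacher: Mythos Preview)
Your argument is correct and uses essentially the same linear change of variable as the paper, namely the cyclic discrete difference $x_i=y_i-y_{i-1}$ (the paper writes it with a harmless index shift and a scaling by $1/n$). The organization differs slightly: the paper computes that under this change of variable the linear forms $L_i(x)=x_{i+1}+2x_{i+2}+\cdots+(n-1)x_{i-1}$ become $y_i$, and then relies on the subsequent Proposition~\ref{prop: blade trop variety} (that the blade equals $\mathcal{V}(\min\{L_1,\ldots,L_n\})$) to finish. Your version instead matches the plates $[i,i+1,\ldots,i-1]$ directly to the regions $\{y:y_{i-1}=\min_j y_j\}$ via the same telescoping, and then passes to boundaries. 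The advantage of your route is that it is self-contained and does not forward-reference Proposition~\ref{prop: blade trop variety}; the paper's route has the virtue of immediately identifying the tropical polynomial whose hypersurface is the blade.
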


\begin{proof}
	Applying the change of variable
	$$(x_1,\ldots, x_n) = (y_2-y_1,y_3-y_2,y_4-y_3,\ldots, y_1-y_n)/n$$
	and taking $y_1+\cdots +y_n=0$, we obtain telescopically
	$$L_i(y) = -(y_{i+1}+y_{i+2}+\cdots +y_{i-1}-(n-1)y_{i})/n = y_i.$$
	
\end{proof}

\begin{prop}\label{prop: blade trop variety}
	For any ordered set partition $\mathbf{S}=(S_1,\ldots, S_\ell)$, the blade $$((\mathbf{S})) = ((S_1,S_2,\ldots,S_\ell))$$ is a tropical hypersurface in $\mathcal{H}_{0,n}$, defined by the tropical polynomial $h_{((\mathbf{s}))}(x)$, that is we have
	$$((\mathbf{S})) = \mathcal{V}\left(h_{((\mathbf{s}))}\right).$$
\end{prop}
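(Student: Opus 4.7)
The plan is to identify, for each $i=1,\ldots,\ell$, the closed region
\[ R_i \;=\; \{x \in \mathcal{H}_{0,n} \,:\, L_i(x) \leq L_j(x) \text{ for all } j\} \]
with the plate $\lbrack (S_i)_0, (S_{i+1})_0, \ldots, (S_{i-1})_0 \rbrack$ obtained as the $i$-th cyclic rotation of $\mathbf{S}$ with all weights set to $0$. Once this is established, the tropical hypersurface $\mathcal{V}(\mathfrak{h}_{((\mathbf{S}))})$ is by definition the locus where the minimum of $L_1,\ldots,L_\ell$ is attained at least twice, i.e., $\bigcup_{i<j}(R_i\cap R_j)$, which is precisely the union of codimension-$1$ shared faces of the $\ell$ cyclic block rotations of $\lbrack\mathbf{S}\rbrack$. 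By Definition \ref{defn:blade} and Equation \eqref{eq: defn blade}, this union is exactly $((\mathbf{S}))$.

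The core step is a telescoping computation. Working directly from $L_i(x)=\sum_{m=1}^{\ell-1} m\,x_{S_{i+m}}$ and extracting the coefficient of each $x_{S_j}$, the coefficient of $x_{S_j}$ in $L_i$ exceeds the coefficient in $L_{i+1}$ by $+1$ for every $j\neq i$, whereas at $j=i$ the difference is $-(\ell-1)$. Summing and invoking the ambient relation $\sum_{j=1}^{\ell} x_{S_j}=0$, which is valid on $\mathcal{H}_{0,n}$, yields
\[ L_i(x)-L_{i+1}(x) \;=\; -\ell\, x_{S_i}. \]
Iterating telescopically with cyclic indices then gives, for $r=1,\ldots,\ell-1$,
\[ L_i(x)-L_{i+r}(x) \;=\; -\ell\, x_{S_i\cup S_{i+1}\cup\cdots\cup S_{i+r-1}}. \]
Thus requiring $L_i\leq L_{i+r}$ for every $r$ is equivalent to the list of inequalities $x_{S_i\cup\cdots\cup S_{i+r-1}}\ge 0$ for $r=1,\ldots,\ell-1$, which is precisely the set of facet inequalities \eqref{eq:hypersimplexPlate} defining the plate $\lbrack (S_i)_0,\ldots,(S_{i-1})_0\rbrack$. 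This delivers $R_i=\lbrack (S_i)_0,\ldots,(S_{i-1})_0\rbrack$, completing the identification.

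The conclusion then follows from the fact, built into Definition \ref{defn:blade}, that the $\ell$ cyclic rotations form a complete simplicial fan on $\mathcal{H}_{0,n}$ with pairwise disjoint interiors: the minimum of $\mathfrak{h}_{((\mathbf{S}))}$ is attained uniquely on the relative interior of each $R_i$ and attained twice precisely on a shared wall. I expect the main obstacle to be bookkeeping: keeping the cyclic indexing of $S_1,\ldots,S_\ell$ consistent under rotation, and carefully handling the modular ``wraparound'' at $j=i$ in the coefficient comparison, where the jump of $-(\ell-1)$ must be absorbed by the hyperplane constraint $\sum_j x_{S_j}=0$. Everything else reduces to linear algebra and the already-available combinatorics of plates.
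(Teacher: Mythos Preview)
Your proof is correct and follows essentially the same approach as the paper: both rest on the telescoping identity $L_i-L_{i+r}=-\ell\,x_{S_i\cup\cdots\cup S_{i+r-1}}$ on $\mathcal{H}_{0,n}$. The only difference is organizational: you first identify each minimum region $R_i$ with the cyclic-rotation plate and then read off the walls, whereas the paper computes the equality locus $L_i=L_j$ directly and matches it to the Minkowski-sum description of the blade; your packaging is arguably cleaner, but the content is the same.
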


\begin{proof}
	Suppose we are given a point $x\in \mathcal{H}_{0,n}$ that is simultaneously a minimum of some $L_i$ and $L_j$, with value say $c_x$.  Taking without loss of generality $1\le i<j\le \ell$, then 
	$$c_x = x_{S_{i+1}} + 2x_{S_{i+2}} + \cdots + (\ell-1)x_{S_{i-1}} = x_{S_{j+1}} + 2x_{S_{j+2}} + \cdots + (\ell-1)x_{S_{j-1}},$$
	or equivalently
	$$(\ell-(j-i))x_{S_i\cup\cdots S_{j-1}} = (j-i)x_{S_j\cup\cdots S_{i-1}}.$$
	But since in $\mathcal{H}_{0,n}$ we have $x_{S_i\cup\cdots \cup S_{j-1}} + x_{S_{j}\cup\cdots \cup S_{i-1}} = \sum_{i=1}^n x_i=0$, it follows that
	$$\ell x_{S_i\cup\cdots \cup  S_{j-1}} = (j-i)x_{S_i\cup S_{i+1}\cdots \cup S_{i-1}} = 0.$$
	Now, supposing that $c_x = L_i(x) = L_j(x)$ is a minimum value, then for all $p \in\{1,\ldots, n\}\setminus\{i,j\}$ we have
\begin{eqnarray*}
	x_{S_{p+1}} +2x_{ S_{p+2}}+\cdots + (\ell-1)x_{S_{p-1}}& \ge & c_x.
\end{eqnarray*}	
	Consequently, taking any $p\in\{i,\ldots, j-1\}$, then
	\begin{eqnarray*}
		(x_{S_{i+1}} +2x_{S_{i+2}}+\cdots +(\ell-1)x_{S_{i-1}}) - (x_{S_p} +x_{S_p \cup S_{p+1}}\cdots + x_{S_p \cup S_{p+1}\cup\cdots \cup S_{p -2}}) & \ge & 0\\
		(\ell-(p-i))x_{S_1\cup \cdots \cup S_{p-1}} - (p-i)x_{S_{p}\cup\cdots \cup S_{\ell}}& \ge  & 0,
	\end{eqnarray*}
	hence
	$$\ell x_{S_1\cup \cdots \cup S_{p-1}} \ge (p-i)x_{S_1\cup\cdots\cup S_\ell}=0,$$
	and so
	$$x_{S_i\cup\cdots \cup S_{p-1}}\ge 0.$$
	Similarly we find $x_{S_j\cup\cdots \cup S_{q-1}}\ge 0$ for each $q=j,j+1,\ldots, i-1$.  Repeating for each $1\le i<j\le n$ and putting everything together, we recover the blade $((S_1,\ldots, S_k))$, as 
	\begin{eqnarray*}
		&&\bigcup_{1\le i<j\le k} \left\{x: x_{S_i}\ge 0,x_{S_i\cup S_{i+1}}\ge 0,\ldots, \ x_{S_j}\ge 0,x_{S_j\cup S_{j+1}}\ge 0,\ldots; \ x_{S_i\cup\cdots \cup S_{j-1}} = 0= x_{S_j\cup\cdots \cup S_{i-1}} \right\}\\
		& = & \bigcup_{1\le i<j\le k} \lbrack S_i,S_{i+1},\ldots, S_{j-1}\rbrack \boxplus\lbrack S_{j},S_{j+1},\ldots, S_{i-1}\rbrack\\
		& = & \bigcup_{i=1}^k \partial\left(\lbrack S_i,S_{i+1}\ldots, S_{i-1}\rbrack\right).
	\end{eqnarray*}
	Here the last line is in agreement with Definition \ref{defn:blade}.
\end{proof}

	In what follows, we shall use the notation $((1,2,\ldots, n))_p$ for the translation of the blade $((1,2,\ldots, n))$ by $p\in\mathbb{R}^n$.
	
	The matroids which encode the vertices of the matroid polytopes $\lbrack (S_1)_{s_1},\ldots, (S_\ell)_{s_\ell}\rbrack \cap \Delta_{k,n}$ have appeared under various names, such as for instance \textit{nested} matroids and \textit{Schubert} matroids.  For a comprehensive discussion see for instance the recent work \cite{Schroter} and references therein.
	
	It is not hard to check (or see \cite{EarlyBlades}) that the $\ell$ cyclic rotations
	$$\lbrack (S_1)_{s_1},(S_2)_{s_2},\ldots, (S_\ell)_{s_\ell}\rbrack,\ \lbrack (S_2)_{s_2},(S_3)_{s_3}\ldots, (S_1)_{s_1}\rbrack,\ldots,\lbrack (S_{\ell})_{s_{\ell}},(S_{\ell+1})_{s_{\ell+1}}\ldots, (S_{\ell-1})_{s_{\ell-1}}\rbrack,$$
	where $s_1+\cdots + s_\ell=k$, form a complete (simplicial) fan in $\mathcal{H}_{k,n}$.  Moreover, in the case that the $S_i$ are all singletons, then $\ell=n$ and $(((S_1)_{s_1},\ldots, (S_n)_{s_n}))$ is the set of facets of (a translation of) the normal fan to a Weyl alcove, namely the simplex in $\mathcal{H}_{0,n}$ where
	$$y_{j_1}\le y_{j_2}\le\cdots \le y_{j_n} \le y_{j_1}+1,$$
	for some permutation $(j_1,\ldots, j_n)$ of $(1,\ldots, n)$.  Consequently, we shall say in particular that the blade $((1,2,\ldots, n))$ \textit{induces} the normal fan to the fundamental Weyl alcove which is characterized by 
	$$y_{1}\le y_2\le\cdots \le x_{n}\le y_1+1.$$

\subsection{Relations to alcoved polytopes}
	Recall that we have fixed the cyclic order $(1,2,\ldots, n)$.
	\begin{defn}[\cite{AlcovedPolytopes}]
		A polytope in $\mathbb{R}^{n-1}$ is said to be \textit{alcoved} if its facet inequalities are of the form $b_{ij}\le y_i-y_j \le  c_{ij}$ for some collection of integer parameters $b_{ij}$ and $c_{ij}$.
	\end{defn}
	As noted in \cite{AlcovedPolytopes}, any alcoved polytope comes with a natural triangulation into Weyl alcoves.

	\begin{defn}
	A polytopal subdivision $\Pi_1\cup\cdots \cup \Pi_t = \Delta_{k,n}$ of a hypersimplex $\Delta_{k,n}$ is said to be a matroid polytope if every maximal cell is a matroid polytope; it is moreover \textit{planar} if every maximal cell $\Pi_i$ is a positroid polytope, that is, its facets are given by equations $x_{i}+x_{i+1}+\cdots+x_{i+m}=r_{i,i+m}$ for some integers $r_{i,i+m}$, where $i\in\{1,\ldots, n\}$ and $1\le m \le n-2$, where the indices are assumed to be cyclic. 
\end{defn}
In light of the change of variables $y_1 = x_1,y_2 = x_{1}+x_2,\ldots, y_{n-1} = x_1+\cdots +x_{n-1}$ on $\mathbb{R}^{n-1}$, we shall abuse terminology and call the maximal cells of a planar polytopal subdivision alcoved polytopes.

	In this paper, unless otherwise stated, we consider exclusively arrangements of the blade defined by a single cyclic order, $((1,2,\ldots, n))$, not its reflection $((1,n,n-1,\ldots, 2))$.  Nonetheless we make one basic observation.
	
	\begin{prop}\label{prop: subdivisionAlcoved polytopes}
		Any arrangement of the blades $((1,2,\ldots, n))$ and $((1,n,n-1,\ldots, 2))$ on the vertices of $\Delta_{k,n}$ induces a polytopal subdivision where the maximal cells are alcoved polytopes.  The subdivision is refined by the alcove triangulation.
	\end{prop}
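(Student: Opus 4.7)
The plan is to reduce the claim to two observations: that every facet hyperplane of the induced subdivision is of alcoved form, and that any alcoved polytope is tiled by Weyl alcoves. Both will follow by identifying precisely which hyperplanes appear, using the description of a blade in Definition \ref{defn:blade}.

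First I would argue that a blade $((\sigma_1,\ldots,\sigma_n))$ placed at a vertex $e_J$ is supported on the union of affine hyperplanes of the form $x_{\sigma_i}+x_{\sigma_{i+1}}+\cdots+x_{\sigma_j}=r$ for some integer $r$ depending on $J$ and on the cyclic arc $\{\sigma_i,\sigma_{i+1},\ldots,\sigma_j\}$. This is immediate from the description \eqref{eq:hypersimplexPlate} of the plates $\lbrack (S_1)_{s_1},\ldots,(S_\ell)_{s_\ell}\rbrack$: each facet inequality of such a plate is an inequality on $x_{S_1\cup\cdots\cup S_m}$, and when $(S_1,\ldots,S_n)$ is a sequence of singletons coming from a cyclic rotation of $(\sigma_1,\ldots,\sigma_n)$, the subset $S_1\cup\cdots\cup S_m$ is precisely an initial arc of that rotation, i.e.\ a cyclic interval of $(\sigma_1,\ldots,\sigma_n)$.

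Next I would apply this to the two specific cyclic orders $(1,2,\ldots,n)$ and $(1,n,n-1,\ldots,2)$. The initial arcs of any cyclic rotation of $(1,n,n-1,\ldots,2)$ are again of the form $\{i,i+1,\ldots,j\}$ taken cyclically modulo $n$, so in both cases the facet hyperplanes of the induced subdivision have the form $x_i+x_{i+1}+\cdots+x_j=c$ for cyclic intervals $\{i,\ldots,j\}\subseteq\{1,\ldots,n\}$. Passing to the coordinates $y_i=x_1+\cdots+x_i$ and using $y_n=k$ to handle the wrap-around case, each such equation becomes of the form $y_j-y_i=c'$ for integers $c'$. Thus every maximal cell of the induced subdivision has facet inequalities of the alcoved form $b_{ij}\le y_j-y_i\le c_{ij}$, so the maximal cells are alcoved polytopes.

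For the refinement statement, I would invoke the fact recalled from \cite{AlcovedPolytopes} that every alcoved polytope, having integer facet inequalities of the form $b_{ij}\le y_j-y_i\le c_{ij}$, is triangulated by the restrictions of the unit Weyl alcoves $y_{j_1}\le y_{j_2}\le\cdots\le y_{j_n}\le y_{j_1}+1$. Applying this cell-by-cell to the subdivision produced in the previous paragraph and comparing with the alcove triangulation of $\Delta_{k,n}$, which is the union of exactly these alcoves, shows that the alcove triangulation simultaneously triangulates every maximal cell and hence refines the subdivision. The only real thing to verify carefully is the bookkeeping for the ``wrap-around'' cyclic intervals in step two — that the constant $c'$ one gets in $y_j-y_i=c'$ is well defined and integral; this is handled by using $y_n=k$ and the integrality of the vertices $e_J$.
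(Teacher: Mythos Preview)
Your argument is correct. Note, however, that the paper states this proposition without proof: it is treated as a basic observation, with the surrounding text (the change of variables $y_i=x_1+\cdots+x_i$ and the remark that any alcoved polytope has a natural alcove triangulation) supplying the ingredients but not the explicit argument. Your write-up fills in exactly those details, and in the way the paper implicitly intends: identify the supporting hyperplanes of the blade as $x_J=r$ for cyclic intervals $J$, observe that the reversed cyclic order $(1,n,\ldots,2)$ has the same family of cyclic intervals (as unordered sets), pass to the $y$-coordinates to see the alcoved form, and then invoke \cite{AlcovedPolytopes} for the refinement by alcoves. There is nothing to correct.
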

		\begin{figure}[h!]
	\centering
	\includegraphics[width=0.7\linewidth]{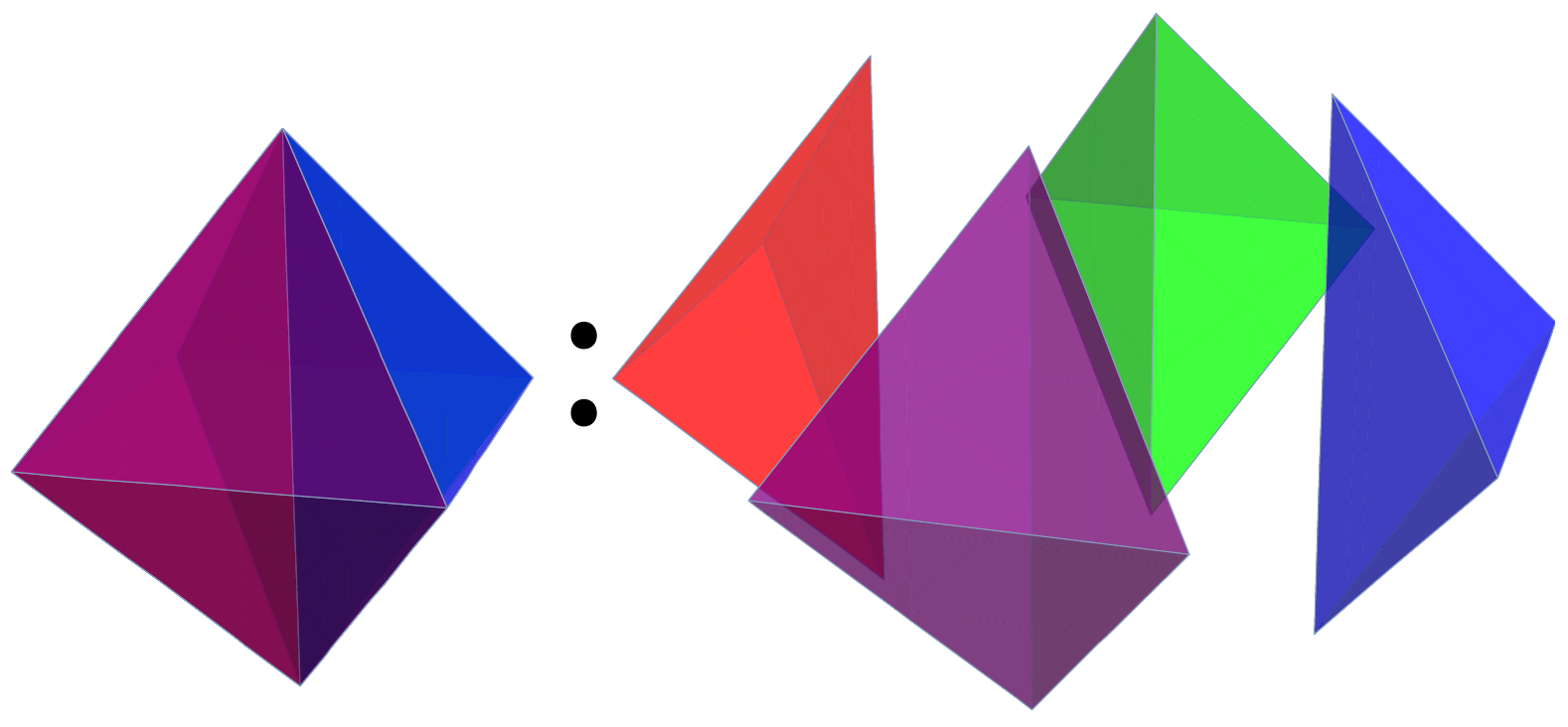}
	\caption{Alcove triangulation of the octahedron $\Delta_{2,4}$}
	\label{fig:lemonslices}
\end{figure}
	\begin{example}\label{example: octahedral nonmatroidal blade arrangement}
		The basic example of a non-matroidal blade arrangement is
		$$\{((1,2,3,4))_{e_{13}}, ((1,2,3,4))_{e_{24}}\}.$$
		Looking forward to Definition \ref{defn:weakly separated}, note that the sets $\{13,24\}$ do not form a weakly separated collection.  See our second main result, Theorem \ref{thm: weakly separated matroid subdivision blade}, for the general criterion for matroidal blade arrangements in terms of weakly separated collections.

		Upon restriction to the second hypersimplex $\Delta_{2,4}$ we obtain the intersections
		$$((1,2,3,4))_{e_{13}}\cap \Delta_{2,4} = ((14_1 23_1))\cap \Delta_{2,4}$$
		$$((1,2,3,4))_{e_{24}}\cap \Delta_{2,4} = ((12_1 34_1))\cap \Delta_{2,4},$$
		which together induce the alcove triangulation of the octahedron into $A(1,3)=4$ simplices, where the $A(k-1,n-1)$ is the Eulerian number which counts the number of permutations of $\left\{1,\ldots, n-1\right\}$ with $k-1$ descents.  As these each tetrahedron has an edge which is not a root, the subdivision is not a matroid subdivision.  See Figure \ref{fig:lemonslices}.

	\end{example}
	From Proposition \ref{prop: cyclically nonplanar finest refinement}, it will follow that the maximal cells found in an arbitrary blade arrangement are in general not matroid polytopes as they may have edges which are not roots $e_i-e_j$, but rather contain sums of orthogonal roots, for instance $e_i-e_j+e_k-e_\ell$.  In the ``worst case scenario'' the maximal cells may be $(n-1)$-dimensional simplices, the Weyl alcoves, whose vertices are among the vertices of the hypersimplex.  In particular, in the case when copies of $((1,2,\ldots, n))$ are placed on enough vertices of $\Delta_{k,n}$, then the induced subdivision of $\Delta_{k,n}$ is the alcove triangulation \cite{AlcovedPolytopes}.

	We shall need the map on affine hyperplanes $\mathcal{H}_{r,n}\subset \mathbb{R}^n$ induced by 
	$$(x_1,\ldots, x_n) \mapsto (y_1,y_2,\ldots,y_n) = (x_1,x_{12},\ldots, x_{12\cdots n}).$$
	\begin{prop}\label{prop: cyclically nonplanar finest refinement}
		The common refinement of all planar matroid subdivisions coincides with the alcove triangulation of the hypersimplex, see \cite{AlcovedPolytopes}.
	\end{prop}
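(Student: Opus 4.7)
The plan is to re-express both subdivisions in terms of a single hyperplane arrangement and then to verify that each of them has exactly this arrangement as its set of interior walls. Setting $y_i = x_1 + x_2 + \cdots + x_i$ (with the convention $y_0 = 0$), the alcove triangulation of $\Delta_{k,n}$ is by construction the polytopal subdivision cut out by the affine arrangement $\mathcal{A} = \{y_i - y_j \in \mathbb{Z} : 0 \le j < i \le n-1\}$, which in the original $x$-coordinates reads
$$\mathcal{A} = \{x_A = c : A \subset \{1,\ldots, n\} \text{ a cyclic interval},\ c \in \mathbb{Z}\},$$
since $y_i - y_j = x_{j+1} + \cdots + x_i$ and any sum over a wrapping cyclic interval equals $k$ minus the complementary non-wrapping sum under $x_1 + \cdots + x_n = k$.

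The easier direction is that the alcove triangulation refines every planar matroid subdivision. Indeed, each maximal cell of a planar matroid subdivision is by definition a positroid polytope whose facet equations have the form $x_{[i, i+m]} = r$ for cyclic intervals $[i, i+m]$ and integers $r$, so every such facet hyperplane already belongs to $\mathcal{A}$ and is hence a wall in the alcove triangulation. Consequently, the alcove triangulation refines every planar matroid subdivision, and therefore also refines their common refinement.

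For the reverse direction I would show that every hyperplane $H \in \mathcal{A}$ meeting the relative interior of $\Delta_{k,n}$ appears as a wall in some planar matroid subdivision, which then forces the common refinement to contain at least all of these walls. Given such a pair $(A, c)$ with $A$ a cyclic interval and $c$ in the admissible range $\max(0, |A| - (n-k)) < c < \min(|A|, k)$, I would construct a $k$-subset $J$ by choosing $c$ consecutive elements at one end of $A$ together with $k - c$ consecutive elements immediately outside $A$, so that $J$ decomposes into exactly two cyclic intervals; Theorem \ref{thm: multisplit blade} would then yield that the blade $((1, 2, \ldots, n))_{e_J}$ induces a $2$-split of $\Delta_{k,n}$ whose separating hyperplane is $H$. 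Since both subdivisions then have the same set of interior walls, namely $\mathcal{A} \cap \Delta_{k,n}$, they must coincide.

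The main obstacle will be verifying that the $2$-split produced by Theorem \ref{thm: multisplit blade} for this particular $J$ really has $x_A = c$ as its separating wall and not some cyclically related hyperplane. I expect this to reduce to an explicit unpacking of the correspondence between the cyclic-interval decomposition of $J$ and the facet equations of the associated multi-split, along the lines of the calculation
$$((1,2,3,4))_{e_{13}} \cap \Delta_{2,4} = ((14_1, 23_1)) \cap \Delta_{2,4}$$
from Example \ref{example: octahedral nonmatroidal blade arrangement}, where the cyclic intervals $\{1\}$ and $\{3\}$ of the index set $J = \{1,3\}$ are directly visible in the facet equation $x_{14} = 1$ of the induced split. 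Once this identification is spelled out in general, positioning the two cyclic runs of $J$ as described above pins down $H$ uniquely, completing the argument.
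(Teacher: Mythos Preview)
Your proposal is correct and follows essentially the same route as the paper: both directions hinge on identifying the planar facet hyperplanes with the alcove arrangement $\mathcal{A}$ via the change of variables $y_i=x_1+\cdots+x_i$, and on observing that the planar $2$-splits already realize every interior wall of $\mathcal{A}$. The only difference is that you route the construction of each $2$-split through Theorem~\ref{thm: multisplit blade} (a forward reference, though not a circular one, since that theorem does not rely on this proposition), whereas the paper simply invokes the $2$-split $(((A)_c,(A^c)_{k-c}))$ directly; your detour through a vertex $e_J$ is harmless but unnecessary.
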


	\begin{proof}
		We first check that the alcove triangulation is a common refinement of the planar matroid subdivisions, and then we show that it is the unique common refinement.
		
		For a fixed cyclic order, all maximal cells can be pulled back to the tessellation of $\mathbb{R}^n\slash (1,1,\ldots, 1)\mathbb{R}$ with Weyl alcoves, since any planar affine hyperplane cutting through the hypersimplex $\Delta_{k,n}$ has the form 
		$$y_j-y_i = x_{i+1}+x_{i+1}+\cdots +x_j = r_{ij}$$
		for some $i\not=j$, and for some integer $1\le r_{ij}\le (j-i)-1$.  

		To conclude the proof, simply observe that the set of two-block planar matroid polytopes, that is the planar matroid 2-splits, already have common refinement the alcove triangulation.
	\end{proof}

	\begin{example}We compute the facet inequalities which define the 11 simplices in the alcove triangulation of $\Delta_{2,5}$, and recover the adjacency graph for the alcove triangulation in Figure \ref{fig:adjacency-graph-d25} from \cite{AlcovedPolytopes}.
		\begin{enumerate}
			\item Five simplices having two facets in the interior of $\Delta_{2,5}$:
			$$\lbrack 12_1 345_1\rbrack \cap \lbrack 23_1 451_1\rbrack,\lbrack 23_1 451_1\rbrack \cap \lbrack 34_1 512_1\rbrack,\ldots, \lbrack 51_1 234_1\rbrack \cap \lbrack 12_1 345_1\rbrack,$$
			that is, respectively,
			$$\left\{x\in\Delta_{2,5}: x_{12}\ge 1, \ x_{23}\ge 1 \right\}, \left\{x\in\Delta_{2,5}: x_{23}\ge 1, \ x_{34}\ge 1 \right\},\ldots, \left\{x\in\Delta_{2,5}: x_{51}\ge 1, \ x_{12}\ge 1 \right\}.$$
			\item Five simplices with three facets in the interior of $\Delta_{2,5}$:
			$$\lbrack 451_1 23_1\rbrack \cap \lbrack 12_1 345_1\rbrack\cap \lbrack 34_1 512_1\rbrack,\lbrack 512_1 34_1\rbrack \cap \lbrack 23_1 451_1\rbrack\cap \lbrack 45_1 123_1\rbrack,\ldots, \lbrack 345_1 12_1\rbrack \cap \lbrack 51_1 234_1\rbrack\cap \lbrack 23_1 451_1\rbrack,$$
			that is, respectively,
			$$\left\{x\in\Delta_{2,5}:x_{451}, x_{12}, x_{234}\ge 1\right\}, \left\{x\in\Delta_{2,5}:  x_{512}, x_{23}, x_{345}\ge 1\right\},\ldots, \left\{x\in\Delta_{2,5}: x_{345}, x_{51}, x_{123}\ge 1\right\}$$
			\item There is one simplex in $\Delta_{2,5}$ with all five facets in the interior of $\Delta_{2,5}$:
			$$\left\{x\in\Delta_{2,5}: x_{123}\ge 1,\ x_{234}\ge 1,\ x_{345}\ge 1,\ x_{451}\ge 1,\ x_{512}\ge 1\right\}.$$
		\end{enumerate}
		Then, translating the blade $((1,2,3,4,5))$ to the five vertices
		$$e_{35},e_{41},e_{52},e_{13},e_{24}\in\Delta_{2,5}$$
		one induces the full alcove triangulation of $\Delta_{2,5}$.
		
		\begin{figure}[h!]
			\centering
			\includegraphics[width=0.8\linewidth]{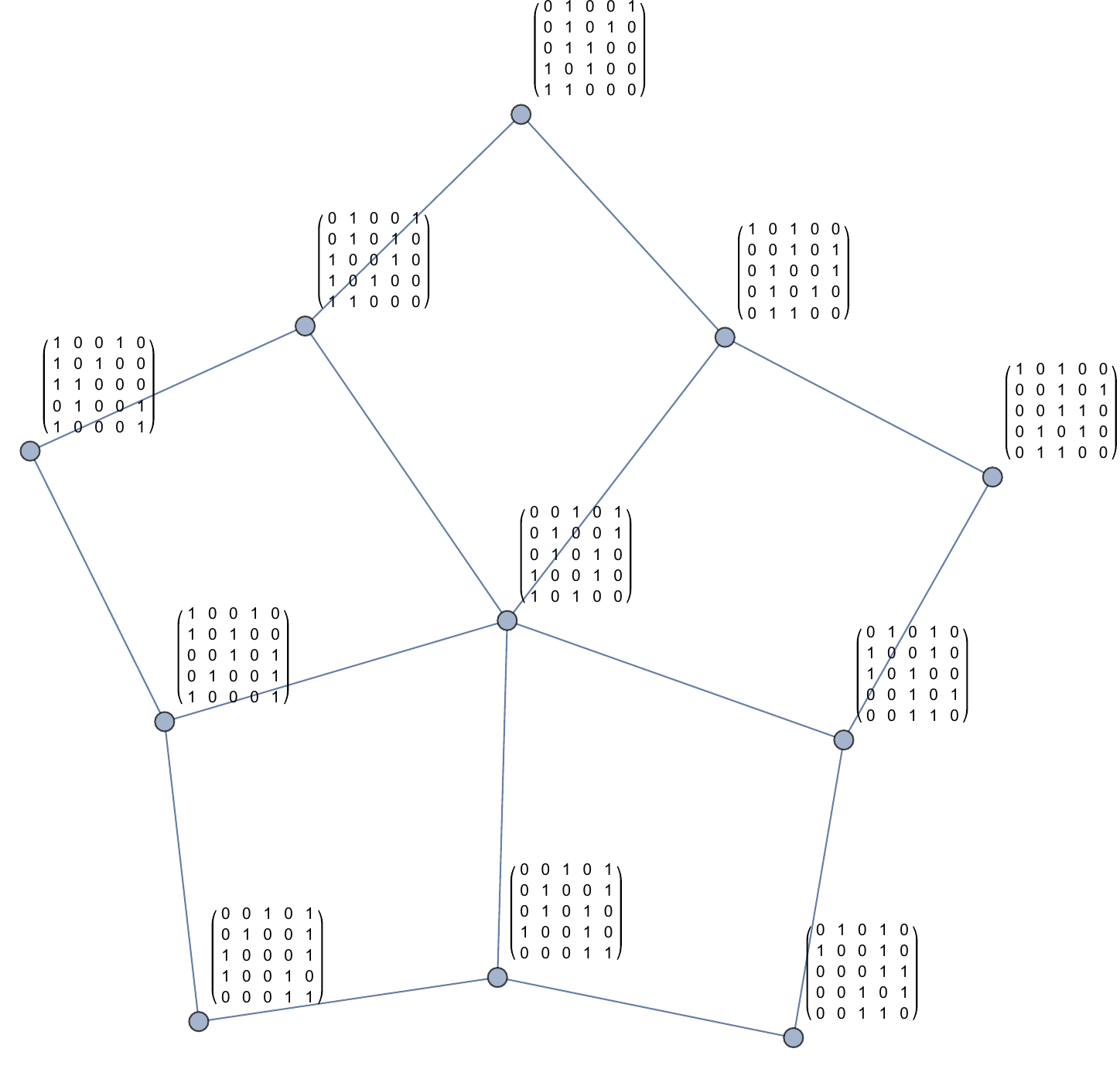}
			\caption{Dual graph to the alcove triangulation of $\Delta_{2,5}$.  Each node represents a simplex in the alcove triangulation of $\Delta_{2,5}$ and is labeled by its vertices in $\Delta_{2,5}$, listed as rows.  Two simplices are joined by an edge when they share a facet.  Thus we have recovered the construction using circuits in \cite{AlcovedPolytopes}, Figure 2.}
			\label{fig:adjacency-graph-d25}
		\end{figure}
		For instance, looking ahead towards Theorem \ref{thm: multisplit blade},
		$$((1,2,3,4,5))_{e_{35}}\cap \Delta_{2,5} = ((1_0 2_0 3_1 4_0 5_1))\cap \Delta_{2,5}=((123_1 45_1))\cap \Delta_{2,5},$$
		where $((123_1 45_1))$ is the affine plane where $x_{123}=1$, which cuts through the interior of the hypersimplex $\Delta_{2,5}$.
	\end{example}

	\section{Matroid subdivisions and matroidal blade arrangements}\label{sec:Matroid subdivisions and matroidal blade arrangements}
	
	\begin{defn}
		Let $d\ge 1$.  A $d$-split of an $m$-dimensional polytope $P$ is a coarsest subdivision $P = P_1\cup\cdots \cup P_d$ into $m$-dimensional polytopes $P_i$, such that the polytopes $P_i$ intersect only on their common facets, and such that 
		$$\text{codim}(P_1\cap\cdots\cap P_d) = d-1.$$  If $d$ is not specified, then we shall use the term multi-split.  
	\end{defn}
	Note that for a subset $I\subset\{1,\ldots, n\}$ with $\vert I\vert=k$, the element $e_I := \sum_{i\in I} e_i$ is a vertex of $\Delta_{k,n}$.  
	
	For any given cyclic order $\sigma=(\sigma_1, \sigma_2, \cdots, \sigma_n)$, each $k$-element subset $I= \{i_1,\ldots, i_k\}\subset \{1,\ldots, n\}$ gives rise to a translation 
	$$((\sigma_1,\sigma_2,\ldots, \sigma_n))_{e_I}.$$
	of the blade $((\sigma_1,\sigma_2,\ldots, \sigma_n))$ by the vector $e_I$.  When no cyclic order or translation is specified we put simply $((1,2,\ldots, n))$.
	
	Evidently a polyhedral subdivision $P=P_1\cup\cdots \cup P_m$ into convex polyhedra $P_i$ is uniquely determined by the union of the convex spans of its internal facets; indeed, in the case when $P=\Delta_{k,n}$, then $P_1,\ldots, P_m$ are the $m$ cyclic block rotations of the (nested) matroid polytope $\lbrack (S_1)_{s_1},\ldots, (S_m)_{s_m}\rbrack$ if and only if $P$ is induced by the blade $(((S_1)_{s_1},\ldots, (S_m)_{s_m}))$.
	
	\begin{prop}\label{prop: nestedmatroids from blade}
		The maximal cells in the matroid subdivision induced by a blade $$(((S_1)_{s_1},\ldots, (S_\ell)_{s_\ell}))$$ are the $\ell$ matroid polytopes
		$$\lbrack (S_1)_{s_1},(S_2)_{s_2},\ldots, (S_\ell)_{s_\ell}\rbrack,\lbrack (S_2)_{s_2},(S_3)_{s_3},\ldots, (S_1)_{s_1}\rbrack ,\ldots, \lbrack (S_\ell)_{s_\ell},(S_1)_{s_1},\ldots, (S_{\ell-1})_{s_{\ell-1}}\rbrack.$$
		Moreover, a maximal cell $\lbrack (S_i)_{s_i},\ldots, (S_{i-1})_{s_{i-1}}\rbrack$ intersects the facet $x_j=1$ of $\Delta_{k,n}$ if and only if $j$ is not in the last block $S_{i-1}$.
	\end{prop}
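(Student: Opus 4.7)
My plan for the first statement is to apply Definition~\ref{defn:blade} directly: the blade $(((S_1)_{s_1},\ldots,(S_\ell)_{s_\ell}))$ is by construction the union of the codimension-one boundaries of the $\ell$ cyclic block rotations of $\lbrack (S_1)_{s_1},\ldots,(S_\ell)_{s_\ell}\rbrack$, and as noted in Section~\ref{sec:Permutohedral blades and alcoved polytopes} these rotations form a complete simplicial fan in $\mathcal{H}_{k,n}$. Consequently the $\ell$ maximal cones of this fan, intersected with the hypersimplex, are precisely the $\ell$ polytopes listed, and the induced subdivision of $\Delta_{k,n}$ has no other maximal cells. I would then remark that each such cell is a matroid (in fact a Schubert/nested matroid) polytope whose bases are the $k$-subsets $I$ satisfying $|I\cap T_r|\ge\sigma_r$ for the prefixes $T_r=S_i\cup\cdots\cup S_{i+r-1}$ and thresholds $\sigma_r=s_i+\cdots+s_{i+r-1}$, citing \cite{Schroter}.

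For the second statement, my approach is to read off the facet structure of the cell $C=\lbrack (S_i)_{s_i},\ldots,(S_{i-1})_{s_{i-1}}\rbrack\cap\Delta_{k,n}$ from its defining inequalities. The cell is cut out by $x_{T_r}\ge\sigma_r$ for $r=1,\ldots,\ell-1$ (inherited from the blade) together with $0\le x_j\le 1$ from the hypersimplex; using $\sum x_j=k$ these imply the complementary bound $x_{S_{i-1}}\le s_{i-1}$ on the last block.

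For the direction $j\notin S_{i-1}$, where $j$ belongs to some $S_{i+t-1}$ with $t<\ell$, the plan is to exhibit $n-1$ affinely independent bases of $C$ that contain $j$, thereby showing the face $C\cap\{x_j=1\}$ has the full facet dimension $n-2$. Equivalently, one verifies that the contracted matroid $M/j$ remains connected on $n-1$ elements, so its matroid polytope has the expected dimension.

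For the direction $j\in S_{i-1}$, I plan to combine $x_j=1$ with the complementary inequality $x_{S_{i-1}}\le s_{i-1}$ and the nonnegativity of the other coordinates in $S_{i-1}$, and to track how this propagates through the chain of blade-inherited inequalities, forcing the cut face to have dimension strictly less than $n-2$. The main obstacle is the careful dimension accounting here: the type-$\Delta_{k,n}$ hypothesis $1\le s_{i-1}\le |S_{i-1}|-1$ is exactly what is needed to ensure both that $j$ can be made to lie in some basis (the face is nonempty) and that the simultaneous tightening of $x_j=1$ with the constraints on $S_{i-1}$ strictly drops the dimension below $n-2$.
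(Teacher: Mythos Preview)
The paper states this proposition without proof, so there is no argument in the paper to compare against. Your outline for the first assertion (the maximal cells being the $\ell$ cyclic block rotations) is correct and follows directly from Definition~\ref{defn:blade} and the complete-fan observation in Section~\ref{sec:Permutohedral blades and alcoved polytopes}, as you say.

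The second assertion, however, is \emph{false as stated}, so your proof of the ``only if'' direction cannot be completed. Take $(k,n)=(3,6)$ with $S_1=\{1,2,3\}$, $S_2=\{4,5,6\}$, $s_1=1$, $s_2=2$, so that $((S_1)_{s_1},(S_2)_{s_2})\in\text{OSP}(\Delta_{3,6})$. The cell $\lbrack 123_1\,456_2\rbrack\cap\Delta_{3,6}$ is cut out by the single inequality $x_{123}\ge 1$, and its intersection with the facet $x_4=1$ contains nine of the ten vertices of $\partial_{(\{4\},1)}(\Delta_{3,6})\simeq\Delta_{2,5}$ (only $e_{456}$ is excluded). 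Hence the intersection is $4$-dimensional, i.e.\ full-dimensional in the facet, even though $4$ lies in the last block $S_2$. Your planned argument---combining $x_j=1$ with $x_{S_{i-1}}\le s_{i-1}$---forces further equalities only when $s_{i-1}=1$; when $s_{i-1}\ge 2$ the resulting constraint $x_{S_{i-1}\setminus\{j\}}\le s_{i-1}-1$ is a genuine half-space and does not drop the dimension below $n-2$. So the ``main obstacle'' you flagged in the dimension accounting is in fact fatal: the biconditional requires the extra hypothesis $s_{i-1}=1$, under which your argument does go through. That corrected form is exactly what is reflected in the boundary rules of Corollary~\ref{cor: subdivision arrangement boundary}, where the number of maximal cells on $\partial_{(\{j\},1)}(\Delta_{k,n})$ drops from $\ell$ to $\ell-1$ only when $s_a=1$ for the block $S_a$ containing $j$.
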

	
	It is well known \cite{Hiroshi} see also \cite{HermannJoswig} that the splits of the second hypersimplex $\Delta_{2,n}$ are in bijection with 2-block set partitions $\{S,S^c\}$ of $\{1,\ldots, n\}$ such that $1<\vert S\vert < n-1$; a direct translation to our terminology and notation, splits of $\Delta_{2,n}$ are \textit{induced} by blades $((S,T))$, where $\{S,T\}$ is a set partition of $\{1,\ldots, n\}$ with $\vert S \vert,\vert T\vert \ge 2$.

	For the second hypersimplices $\Delta_{2,n}$, there is a compatibility condition for splits to induce matroid subdivisions, see \cite{Hiroshi}, and \cite[Proposition 5.4]{HermannJoswig}, by specializing to $k=2$.
Via the correspondence between the split $(S,S^c)$ and the blade $((S,S^c))$ which induces it we have the following characterization of matroid subdivisions of $\Delta_{k,n}$ which arise by refining collections of compatible 2-splits.
	\begin{cor}\label{cor:blade splits}
		A pair of blades $(((S)_1,(S^c)_1))$ and $(((T)_1,(T^c)_1))$ induces a matroid subdivision of $\Delta_{2,n}$ if and only if at least one of the following four intersections is empty:
		$$S\cap T,\ S\cap T^c,\ S^c\cap T,\ S^c\cap T^c.$$
	\end{cor}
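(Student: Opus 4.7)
The plan is to pass through the blade--split correspondence discussed immediately before the statement, and then to invoke the Herrmann--Joswig/Hirai compatibility criterion for multi-splits of $\Delta_{2,n}$, while supplementing with a direct geometric argument that isolates the obstruction in an octahedron. First, I would recall that the blade $(((S)_1,(S^c)_1))$ induces the 2-split $\Delta_{2,n}=\lbrack (S)_1,(S^c)_1\rbrack\cup\lbrack (S^c)_1,(S)_1\rbrack$, whose two maximal cells are the positroid polytopes $\{x\in\Delta_{2,n}:x_S\ge 1\}$ and $\{x\in\Delta_{2,n}:x_{S^c}\ge 1\}$, and similarly for $T$. The common refinement of the two blades therefore consists of the (possibly empty) cells
\[
P_{U,V}\ =\ \{x\in\Delta_{2,n}:x_U\ge 1,\ x_V\ge 1\},\qquad (U,V)\in\{S,S^c\}\times\{T,T^c\}.
\]

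Second, for the nontrivial direction I would assume all four intersections $S\cap T$, $S\cap T^c$, $S^c\cap T$, $S^c\cap T^c$ are nonempty and produce an explicit non-root edge in one of the cells $P_{U,V}$. Choose $a\in S\cap T$, $b\in S\cap T^c$, $c\in S^c\cap T$, $d\in S^c\cap T^c$ and restrict to the octahedral face $\Delta_{2,\{a,b,c,d\}}$. The two blades cut this octahedron by the hyperplanes $x_a+x_b=1$ and $x_a+x_c=1$, and the cell $P_{S,T}\cap\Delta_{2,\{a,b,c,d\}}$ has vertex set $\{e_{ab},e_{ac},e_{ad},e_{bc}\}$ (the two vertices $e_{cd}$ and $e_{bd}$ are excluded). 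These four points are affinely independent, so they span a tetrahedron whose edge $e_{ad}-e_{bc}=(e_a-e_b)+(e_d-e_c)$ is a sum of two orthogonal roots and not a root itself. Hence $P_{S,T}$ fails to be a matroid polytope, and the induced subdivision is not matroidal.

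Third, for the converse I would assume one of the four intersections is empty, say $S\cap T=\emptyset$, and verify that every nonempty cell $P_{U,V}$ is a matroid polytope. The cell $P_{S,T}$ has vertex set $\{e_i+e_j:i\in S,\ j\in T\}$, which is the transversal matroid polytope for the disjoint pair $\{S,T\}$; its edges are exactly root differences obtained by swapping the $S$-coordinate or the $T$-coordinate. The cell $P_{S^c,T^c}$ has vertex set $\{e_{ij}:\{i,j\}\not\subseteq S,\ \{i,j\}\not\subseteq T\}$, which is a matroid polytope of direct-sum/partition type; and the two cells $P_{S,T^c}$ and $P_{S^c,T}$ are nested-matroid polytopes (Schubert matroids) as in Proposition~\ref{prop: nestedmatroids from blade}. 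Using the standard localization principle that a subdivision of $\Delta_{2,n}$ is matroidal iff its restriction to each octahedral face $\Delta_{2,\{a,b,c,d\}}$ is matroidal, each such restriction reduces to a trivial 2-split or a compatible pair of splits of the octahedron, completing the argument.

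The main obstacle is really just the bookkeeping in the converse direction, since there are four cells to inspect; the forward direction is encapsulated cleanly by the octahedral obstruction, which is also the source of the corresponding $k=2$ case of the weak separation criterion to be used later in Theorem~\ref{thm: weakly separated matroid subdivision blade}.
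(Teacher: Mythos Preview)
Your argument is correct. The forward direction via the octahedral obstruction is clean and essentially the same computation the paper later uses (implicitly) in the base case of Lemma~\ref{lem: weakly separated matroid subdivision blade}. One small imprecision in your converse: when $S\cap T=\emptyset$, the cell $P_{S,T}=\{x_S\ge 1,\ x_T\ge 1\}$ is not full-dimensional (it forces $x_S=x_T=1$), so it is not a maximal cell of the common refinement; and the constraints defining $P_{S,T^c}$ and $P_{S^c,T}$ collapse (since $S\subseteq T^c$ gives $x_S\ge 1\Rightarrow x_{T^c}\ge 1$), so these are literally the original split cells $\{x_S\ge 1\}$ and $\{x_T\ge 1\}$. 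This only simplifies your bookkeeping.

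By contrast, the paper does not prove this corollary directly at all: it is stated as an immediate translation of the known split-compatibility criterion for $\Delta_{2,n}$ from \cite{Hiroshi} and \cite[Proposition~5.4]{HermannJoswig}, via the correspondence (established just before the statement) between the 2-split $(S,S^c)$ and the blade $(((S)_1,(S^c)_1))$ inducing it. So the paper's ``proof'' is a one-line citation, whereas you give a self-contained geometric argument. Your approach has the advantage of being independent of those references and of previewing exactly the octahedral mechanism that drives the $k=2$ base case in Theorem~\ref{thm: weakly separated matroid subdivision blade}; the paper's approach is shorter and defers the work to the literature.
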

	In the case when both blades are planar (i.e. $S$ and $T$ are both intervals on the circle), then split compatibility of a collection of subsets is equivalent to asking that the corresponding collection of 2-element subsets be weakly separated.  See Theorem \ref{thm: weakly separated matroid subdivision blade}.

\begin{defn}
	An arrangement of blades $\{((1,2,\ldots, n))_{e_{I_1}},\ldots, \{((1,2,\ldots, n))_{e_{I_m}}\}$ on the vertices $e_{I_1},\ldots, e_{I_m}$ of the hypersimplex $\Delta_{k,n}$ is \textit{matroidal} if every chamber in the induced subdivision is a matroid polytope.
\end{defn}
In Theorem \ref{thm: multisplit blade}, our first main result, with respect to the natural order $((1,2,\ldots, n))$, given a vertex $e_J$ we give the explicit formula for the decorated ordered set partition $((S_1)_{s_1},\ldots, (S_\ell)_{s_\ell})\in\text{OSP}(\Delta_{k,n})$ such that on the hypersimplex the translated blade and the hypersimplicial blade $(((S_1)_{s_1},\ldots, (S_\ell)_{s_\ell}))$ coincide, that is
$$((1,2,\ldots, n))_{e_J} \cap \Delta_{k,n} = (((S_1)_{s_1},\ldots, (S_\ell)_{s_\ell}))\cap \Delta_{k,n}.$$
This phenomenon is illustrated in Figure \ref{fig:hexagon-blade-arrangement}.

	\begin{thm}\label{thm: multisplit blade}
		Let $e_I$ be a vertex of $\Delta_{k,n}$ and fix an $n$-cycle, say without loss of generality $\sigma = (1,2,\ldots, n)$.  Then, the translated blade $((1,2,\ldots, n))_{e_{I}}$ induces a multi-split matroid subdivision of $\Delta_{k,n}$, with $\ell$ maximal cells, that is trivial precisely when the subset $I$ coincides with a cyclic interval in $\{1,\ldots, n\}$: 
		$$\left(((1,2,\ldots, n))_{e_{I}}\right)\cap \Delta_{k,n} = \left(((S_1)_{s_1},(S_2)_{s_2},\ldots, (S_\ell)_{s_\ell})\right)\cap \Delta_{k,n},$$
		where $((S_1)_{s_1},\ldots, (S_\ell)_{s_\ell})\in \text{OSP}(\Delta_{k,n})$, constructed explicitly in the proof, satisfies the property that $\ell$ equals the number of cyclic intervals in $I$.  In particular, the blade induces the trivial matroid subdivision, if and only if $I$ is a cyclic interval.
	\end{thm}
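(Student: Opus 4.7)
The plan is to construct the decorated ordered set partition $((S_1)_{s_1},\ldots,(S_\ell)_{s_\ell})$ explicitly from the cyclic interval structure of $I$, and then identify the two subdivisions of $\Delta_{k,n}$ appearing in the asserted equation cell by cell by comparing their facet inequalities. Write $I = I_1 \sqcup I_2 \sqcup \cdots \sqcup I_\ell$ as the disjoint union of its maximal cyclic intervals, and $I^c = J_1 \sqcup J_2 \sqcup \cdots \sqcup J_\ell$ as the disjoint union of the complementary cyclic intervals, listed so that $J_p$ is the cyclic interval of $I^c$ immediately preceding $I_p$. Set $S_p := J_p \cup I_p$, $s_p := |I_p|$, and let $i_p$ denote the first element of $J_p$. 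Because each $J_p$ is nonempty, one has $1 \le s_p \le |S_p|-1$, so $((S_1)_{s_1},\ldots,(S_\ell)_{s_\ell})$ is a well defined element of $\text{OSP}(\Delta_{k,n})$.

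Next I would decompose the translated blade $((1,2,\ldots,n))_{e_I}$ as the union of boundaries of the $n$ translated cyclic cones $C_i = \lbrack i,i+1,\ldots,i-1\rbrack_{e_I}$ in $\mathcal{H}_{k,n}$, each cut out by the $n-1$ facet inequalities $x_{\{i,i+1,\ldots,i+j-1\}} \ge |I \cap \{i,i+1,\ldots,i+j-1\}|$ for $j=1,\ldots,n-1$. Using only the hypersimplex bounds $0 \le x_m \le 1$ and $\sum_m x_m = k$, one of these inequalities forces $C_i$ onto $\partial \Delta_{k,n}$ precisely when its underlying arc $A$ satisfies $A \subseteq I$ or $I \subseteq A$; collecting these obstructions as $j$ varies, $C_i \cap \Delta_{k,n}$ is full-dimensional if and only if $i \notin I$ and $i-1 \in I$, namely $i$ is the first element of a cyclic interval of $I^c$. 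There are exactly $\ell$ such starting indices, and they are $i_1,\ldots,i_\ell$, so the induced subdivision has $\ell$ maximal cells.

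For each such $C_{i_p}$, I would then compare consecutive facet inequalities $F_j$ and $F_{j+1}$ with endpoints $t = i_p+j-1$ and $t+1$: a one-line computation using $0 \le x_{t+1} \le 1$ shows that $F_{j+1}$ implies $F_j$ when $t+1 \in I$, while $F_j$ implies $F_{j+1}$ when $t+1 \in I^c$. Hence $F_j$ is non-redundant in $\Delta_{k,n}$ precisely when $t \in I$ and $t+1 \in I^c$, i.e., when $t$ is the last element of some cyclic interval $I_q$ of $I$. The end $t = i_p-1$ of $I_{p-1}$ would correspond to $j = n$ which is not a valid facet index, so exactly $\ell-1$ active facets survive; each rewrites as $x_{S_p \cup S_{p+1} \cup \cdots \cup S_q} \ge s_p + s_{p+1} + \cdots + s_q$, which is precisely the corresponding facet of the hypersimplicial cell $\lbrack (S_p)_{s_p},(S_{p+1})_{s_{p+1}},\ldots,(S_{p-1})_{s_{p-1}}\rbrack$. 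Consequently $C_{i_p}\cap\Delta_{k,n}$ and this cell coincide, and taking the union over $p=1,\ldots,\ell$ gives the asserted equality of blades on $\Delta_{k,n}$.

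For the triviality statement, if $\ell=1$ then $I$ is a single cyclic interval, the unique full-dimensional cone $C_{i_1}$ fills all of $\Delta_{k,n}$, every one of its facets is redundant, and the induced subdivision is trivial; conversely, as soon as $\ell \ge 2$ the active facet $x_{S_1} \ge s_1$ is non-degenerate by construction and cuts through the interior. The main obstacle in executing this plan is the careful bookkeeping in the reduction of adjacent facets: one must track in which direction each of the bounds $0 \le x_{t+1} \le 1$ is used (namely $x_{t+1}\le 1$ when $t+1 \in I$, and $x_{t+1}\ge 0$ when $t+1 \in I^c$), and correctly treat the boundary facet indices $j=1$ and $j=n-1$, in order to conclude that exactly the facets indexed by ends of cyclic intervals of $I$ survive in $\Delta_{k,n}$.
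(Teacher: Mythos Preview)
Your proof is correct and reaches the same decorated ordered set partition as the paper, with the same redundancy mechanism at its core (dropping facet inequalities using $0\le x_{t+1}\le 1$). The organization, however, is different from the paper's. The paper analyzes the codimension-$1$ pieces of the translated blade: for each cyclic $2$-block partition $\{J,J^c\}$ it passes to the hyperplane $x_J=\vert J\cap I\vert$, expresses the blade there as a Minkowski sum $\lbrack j_1,\ldots,j_t\rbrack_{e_{J\cap I}}\boxplus\lbrack j_t+1,\ldots,j_1-1\rbrack_{e_{J^c\cap I}}$, and reduces the facet list of each factor. You instead work directly with the $n$ maximal cones $C_i$ of the fan, first eliminating those forced onto $\partial\Delta_{k,n}$ by an inequality $x_A\ge\vert A\vert$ or $x_A\ge k$, and then, for each of the $\ell$ surviving cones, pruning consecutive facets $F_j$ until only the $\ell-1$ inequalities $x_{S_p\cup\cdots\cup S_q}\ge s_p+\cdots+s_q$ remain.

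What each buys: the paper's wall-by-wall viewpoint makes the Minkowski-sum structure of blades visible and dovetails with its earlier Proposition on that decomposition; your cell-by-cell viewpoint is more elementary, avoids the Minkowski-sum machinery entirely, and makes the bijection between surviving cones and cyclic block rotations of $\lbrack(S_p)_{s_p},\ldots,(S_{p-1})_{s_{p-1}}\rbrack$ completely explicit. Two small points to tighten: first, your ``if and only if'' for full-dimensionality of $C_i\cap\Delta_{k,n}$ is only fully justified once you identify $C_{i_p}\cap\Delta_{k,n}$ with the nested-matroid cell (which you do in the next paragraph), so you may want to phrase the first pass as ``at most $\ell$ cones survive'' and upgrade to exactly $\ell$ after the identification. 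Second, the paper closes by invoking Schr\"oter's characterization of multi-splits to name the resulting subdivision a multi-split matroid subdivision; in your write-up this follows immediately from the asserted equation together with the known fact that the hypersimplicial blade $(((S_1)_{s_1},\ldots,(S_\ell)_{s_\ell}))$ induces an $\ell$-split into nested matroid polytopes, but you should say so.
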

	
	\begin{proof}
		Suppose first that we are given one of the $n$ cyclically contiguous subsets $I$, which, without loss of generality, we take to be $I=\{1,2,\ldots, k\}$.  We need to determine the intersection of 
		$$((1,2,\ldots, n))_{e_{12\cdots k}} =((1_1,2_1,\ldots, k_1,(k+1)_0,\ldots, n_0))$$
		with the hypersimplex $\Delta_{k,n}$.

		Note that the $\binom{n}{2}$ cones of $((1,2,\ldots, n))$ are Cartesian products of pointed cones which are in hyperplanes in bijection with partitions $J\cup J^c = \{1,\ldots, n\}$, where $J\subsetneq \{1,\ldots, n\}$ is a cyclically contiguous (nonempty) subset.  We have therefore two cases for each decomposition $J \cup  J^c = \{1,2,\ldots, n\}$.  Without loss of generality, let us assume that $J\cap \{1,\ldots, k\}\not=\emptyset $.  The cases are as follows.
		\begin{enumerate}
			\item $J$ is cyclically contiguous with $J\subseteq \{1,\ldots, k\}$.  Then, from the definition of $((1,2,\ldots, n))_{e_I}$, we have $x_J = \vert J\vert$, and since we are in a hypersimplex where $x_j\in\lbrack 0,1\rbrack$, the affine hyperplane 
			$$\mathcal{H}_{\vert J\vert,n}=\left\{x\in\mathbb{R}^n: x_J=\vert J\vert,\  x_{J^c} = k-\vert J\vert\right\}$$
			 does not cut through the interior of $\Delta_{k,n}$, hence neither does the sheet of the blade and so the induced subdivision is trivial.
			\item $J$ is cyclically contiguous with $J\cap \{1,\ldots, k\}$ and $J \cap \{1,\ldots, k\}^c$ both nonempty.  We have $x_{J\cap \{1,\ldots, k\}} \ge \vert J\cap \{1,\ldots, k\} \vert$, and again the cone does not cut through the interior of $\Delta_{k,n}$, and the induced subdivision is trivial.
		\end{enumerate}
		Assuming now that $I$ is not cyclically contiguous, then let
		$$I=\{i_1,i_1+1,\ldots, i_1+({\lambda_1}-1)\} \cup \{i_2,i_2+1,\ldots, i_2+({\lambda_2}-1)\}\cup \cdots\cup \{i_{\ell},i_{\ell}+1,\ldots, i_\ell + (\lambda_{\ell}-1)\},$$
		be its decomposition into cyclic intervals, so that we have $\lambda_1+\cdots+ \lambda_\ell = k$.   Denote $I_j = \{i_j,i_j+1,\ldots, i_j+({\lambda_j}-1)\}$ where without loss of generality we assume that $1\in I_1$.  Let $(C_1,\ldots, C_\ell)$ be the interlaced complement to the intervals $I$, so that we have the concatenation
		$$(1,2,\ldots, n) = (C_1,I_1,C_2,I_2,\ldots, C_\ell,I_\ell).$$
		In other words, the $I_j$ are the positions of consecutive one's, and the $C_j$ are the positions of consecutive zero's.

		We claim that 
		$$\left(((1,2,\ldots, n))_{e_{I}}\right)\cap \Delta_{k,n} = \left(((S_1)_{s_1},(S_2)_{s_2},\ldots, (S_\ell)_{s_\ell})\right)\cap \Delta_{k,n},$$
		where $(\mathbf{S}_{\mathbf{s}}) = ((S_1)_{s_1},\ldots, (S_\ell)_{s_\ell}) \in \text{OSP}(\Delta_{k,n})$ is the decorated ordered set partition defined by $(S_j,s_j) = (I_j\cup C_j , \vert I_j \vert )$. 
		
		To see this, again take a nontrivial set partition $\{J,J^c\}$ of $\{1,\ldots, n\}$ into cyclically consecutive sets $J$ and $J^c$, where $J = \{j_1,\ldots, j_t\}$, say, with $t\ge 1$.  We study the intersection of $((1,2,\ldots, n))_{e_I}$ with the hyperplane $U_I = \left\{x\in \mathcal{H}_{k,n}: x_J = \vert J\cap I\vert \right\}$.  Here we note that $U_I\cap \Delta_{k,n} \simeq \Delta_{\vert J\cap I\vert,\vert J\vert}\times \Delta_{k-\vert J\cap I\vert,n-\vert J\vert}$.  
		
		The intersection of the blade with $U_I$ is a (translated) Minkowski sum of (orthogonal) simplicial cones: 
		$$((1,2,\ldots, n))_{e_I}\cap U_I  = \lbrack j_1,\ldots, j_t\rbrack_{e_{J\cap I}} \boxplus \lbrack j_{t}+1,j_{t}+2\ldots, j_{1}-1\rbrack_{e_{J^c\cap I}}.$$
		The first factor $\lbrack j_1,\ldots, j_t\rbrack_{e_{J\cap I}}$ is characterized by the facet inequalities
		\begin{eqnarray*}
			x_{j_1} & \ge & r_{1}\\
			x_{j_1j_2} & \ge & r_{1}+r_{2}\\
			& \vdots &\\
			x_{j_1\cdots j_{t-1}} & \ge  & r_{1}+r_{2}+\cdots +r_{t-1}\\
			x_{j_1\cdots j_t} & = & r_{1}+r_{2}+\cdots + r_{t-1}+r_t\\
			& = & \vert J\cap I\vert,
		\end{eqnarray*}
		where $r_{\ell} = 1$ if $j_\ell \in J\cap I$, and otherwise if $j_\ell \in J\cap I^c = J\cap (C_1\cup\cdots\cup C_\ell)$ then $r_{\ell} = 0$.  Now whenever $r_\ell=0$, the corresponding inequality 
		$$x_{j_1}+\cdots +x_{j_\ell} \ge r_1+\cdots +r_\ell$$
		is implied by the line above it and is redundant; thus, in the decorated ordered set partition $j_{\ell}$ joins the (possibly singleton) block containing $j_{\ell-1}$.  Repeating this argument for all $j_\ell$ such that $r_\ell=0$ gives an ordered set partition $(S_1,\ldots, S_u)$ of $J$, where $S_j = I_j\cup C_j$ consists of a consecutive interval of labels $I_j$ corresponding to the positions of 1 in the vector $e_I$, and a consecutive interval $C_j$ consisting of the positions of $0$ in the vector $e_I$, and we have $1\le u\le \ell-1$.  Moreover, the block $S_j$ is accompanied by  $s_j=\vert S_j \cap I\vert  = \vert I_j\vert$.  Repeating the argument for $J^c$ and all other set partitions $\{J,J^c\}$ of $\{1,\ldots, n\}$ into cyclically consecutive intervals, then in the decorated ordered set partition notation we find the expression $((S_1)_{s_1},\ldots, (S)_{s_\ell})$.

		Consequently, restricting further to the hypersimplex $\Delta_{\vert J\cap I\vert,J}\subset U_I$ gives 
		$$\lbrack j_1,\ldots, j_t\rbrack_{e_{J\cap I}} \cap\Delta_{\vert J\cap I\vert,J} = \lbrack (S_1)_{s_1},\ldots, (S_u)_{s_u}\rbrack \cap\Delta_{\vert J\cap I\vert,J},$$
		where we denote by
		$$\Delta_{a,J} = \left\{\sum_{j\in J}t_j e_j\in\lbrack 0,1\rbrack^{n} :\sum_{j\in J}t_j = a\right\}$$
		the hypersimplex, isomorphic to $\Delta_{a,\vert J\vert}$, that is supported in the affine subspace indexed by $J$.

		Finally, translating from Theorem 3.14 of \cite{SchroterMultisplits}, any (nontrivial) multi-split of the hypersimplex $\Delta_{k,n}$ has for its maximal cells the $\ell\ge 2$ cyclic block rotations of some nested matroid $\lbrack (S_1)_{s_1},\ldots, (S_\ell)_{s_\ell}\rbrack,$ where $( (S_1)_{s_1},\ldots, (S_\ell)_{s_\ell})\in\text{OSP}(\Delta_{k,n})$.  This is precisely the condition that characterizes the ($\ell$-split) matroid subdivision induced by the blade 
		$$(((S_1)_{s_1},\ldots, (S_\ell)_{s_\ell})).$$
		Noting that $\ell$ is the number of cyclic intervals of 1's completes the proof.
	\end{proof}

\begin{example}
	For $(k,n) = (4,8)$, 
	$$((1,2,3,4,5,6,7,8))_{e_{1247}}\cap \Delta_{4,8} = ((1_1 2_1 3_0 4_1 5_0 6_0 7_1 8_0))\cap \Delta_{4,8} = ((128_2 34_1 567_1))\cap \Delta_{4,8}.$$
\end{example}
	\begin{example}\label{example: matroid subdivision 4 coords}
	The four chambers of the matroid subdivision induced in $\Delta_{4,8}$ by the blade $((12_1 34_1 56_1 78_1 ))$, where
	$$((1,2,\ldots, 8))_{e_{2468}}\cap \Delta_{4,8} = ((12_1 34_1 56_1 78_1))\cap \Delta_{4,8},$$
	are cut out by the facet inequalities respectively
	\begin{center}
		$$x_{12}\ge 1,\ x_{1234}\ge 1+1,\ x_{123456}\ge 1+1+1$$
		$$x_{34}\ge 1,\ x_{3456}\ge 1+1,\ x_{345678}\ge 1+1+1$$
		$$x_{56}\ge 1,\ x_{5678}\ge 1+1,\ x_{567812}\ge 1+1+1$$
		$$x_{78}\ge 1,\ x_{7812}\ge 1+1,\ x_{781234}\ge 1+1+1.$$
	\end{center}
	These chambers project under the map $(x_1,\ldots, x_8)\mapsto (x_{12},x_{34},x_{56},x_{78})$ onto the four chambers in the subdivision $((1_1 2_1 3_1 4_1))$ of the second dilation of the octahedron $\left\{y\in \lbrack 0,2\rbrack^4: \sum_{i=1}^4 y_i=4 \right\}$ in Figure \ref{fig:4-9-2019-octahedron-in-blade}, with facet inequalities respectively
	\begin{center}
		$$y_{1}\ge 1,\ y_{12}\ge 1+1,\ y_{123}\ge 1+1+1$$
		$$y_{2}\ge 1,\ y_{23}\ge 1+1,\ y_{234}\ge 1+1+1$$
		$$y_{3}\ge 1,\ y_{34}\ge 1+1,\ y_{341}\ge 1+1+1$$
		$$y_{4}\ge 1,\ y_{41}\ge 1+1,\ y_{412}\ge 1+1+1.$$
	\end{center}
	Here we have used different variables to emphasize that one set of polytopes is in $\Delta_{4,8}$, while the other set is in the second dilation of the octahedron $\Delta_{2,4}$.
\end{example}
\begin{figure}[h!]
	\centering
	\includegraphics[width=0.4\linewidth]{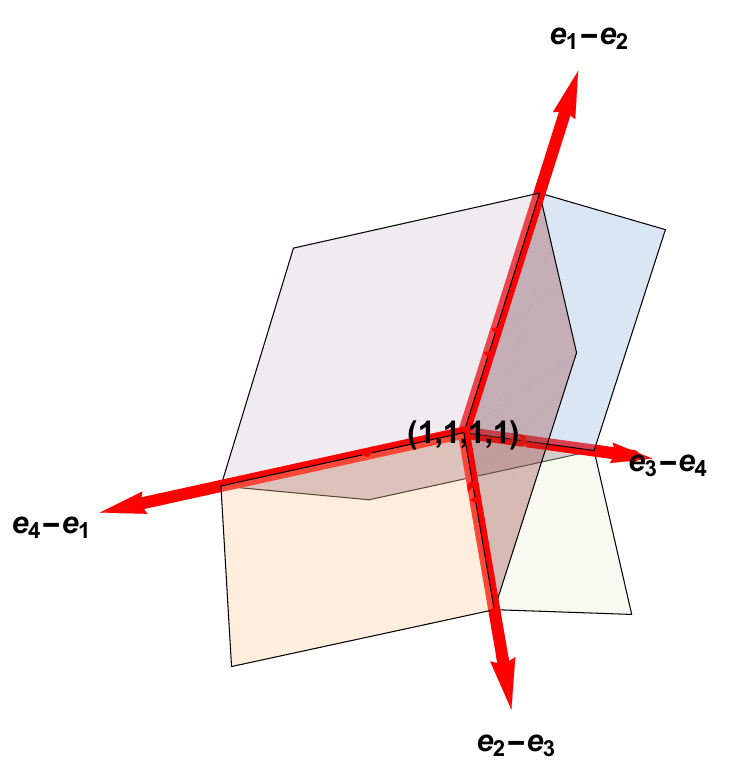}
	\includegraphics[width=0.4\linewidth]{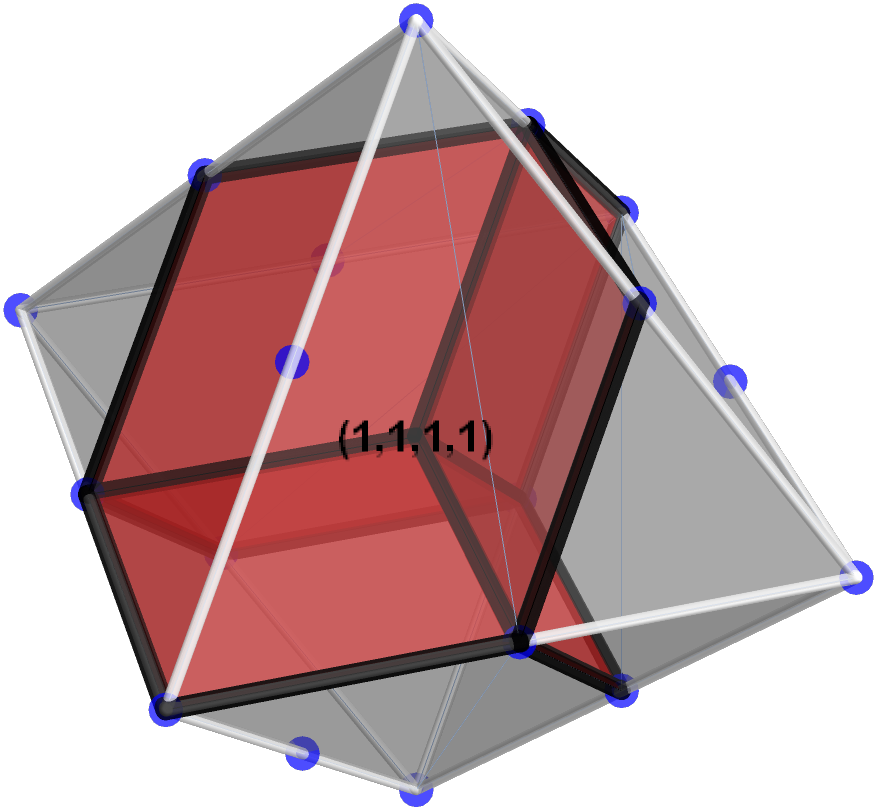}	
	\caption{The blade $((1_1 2_1 3_1 4_1))$ induces a 4-split.  From Example \ref{example: matroid subdivision 4 coords}: the blade $((1,2,3,4,5,6,7,8))_{e_{2468}}$ induces the same 4-split of $\Delta_{4,8}$ as the blade $((12_1 34_1 56_1 78_1))$.  Here $\Delta_{4,8}$ is viewed via its projection onto the second dilation of the octahedron $\Delta_{2,4}$ (right).
}
	\label{fig:4-9-2019-octahedron-in-blade}
\end{figure}

	\begin{figure}[h!]
	\centering
	\includegraphics[width=0.7\linewidth]{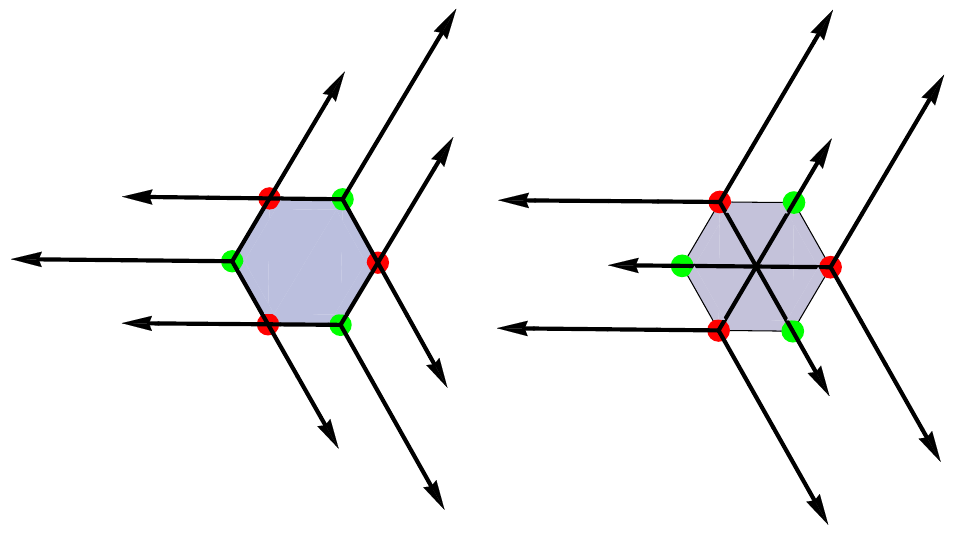}
	\caption{Two arrangements of the blade $((1,2,3))$ on the vertices of a hexagon; vertices  are permutations of $(0,1,2)$.  Blade arrangement on left (induces the trivial subdivision): $ \left\{((1_2 2_0 3_1)), ((1_0 2_1 3_2)), ((1_1 2_2 3_0))\right\}$.  Blade arrangement on right (induces a 6-chamber subdivision): $\left\{((1_2 2_1 3_0)), ((1_1 2_0 3_2)), ((1_0 2_2 3_1))\right\}$.  See Example \ref{example: hexagon subdivision} for details.
	}
	\label{fig:hexagon-blade-arrangement}
\end{figure}

\begin{example}\label{example: hexagon subdivision}
	Figure \ref{fig:hexagon-blade-arrangement} presents the projections of the two arrangements of blades on the vertices of $\Delta_{3,6}$ onto the vertices of a hexagon in the affine hyperplane in $\mathbb{R}^3$ where $y_{123}=3$, via 
	$$\mathbf{x}\mapsto (x_1+x_2,x_3+x_4,x_5+x_6) =: (y_1,y_2,y_3),$$
	where $y_1,y_2,y_3$ are the coordinate functions on $\mathbb{R}^3$.
	
	Then via this projection we have respectively
	$$\left\{((1_1 2_1 3_0 4_0 5_0 6_1)), ((1_0 2_0 3_0 4_1 5_1 6_1)), ((1_0 2_1 3_1 4_1 5_0 6_0))\right\} \mapsto  \left\{((1_2 2_0 3_1)), ((1_0 2_1 3_2)), ((1_1 2_2 3_0))\right\}$$
	and
	$$\left\{((1_1 2_1 3_0 4_1 5_0 6_0)), ((1_0 2_1 3_0 4_0 5_1 6_1)), ((1_0 2_0 3_1 4_1 5_0 6_1))\right\} \mapsto \left\{((1_2 2_1 3_0)), ((1_1 2_0 3_2)), ((1_0 2_2 3_1))\right\}.$$

\end{example}

\begin{example}
	Consider the tessellation in Figures \ref{fig:hexagonalbladetessellation}, \ref{fig:weightpermtessellation} and \ref{fig:weightpermtessellation2}.  Around each of the following vertices in Figure \ref{fig:weightpermtessellation} we have tripods oriented with respect to the 3-cycle $(1,2,3)$,
	$$\{((1,2,3))_{3e_1+e_2+2e_3},((1,2,3))_{6e_3},((1,2,3))_{4e_1-3e_2+5e_3}\},$$
	while the other three are oriented with respect to the 3-cycle $(1,3,2)$:
	$$\{((1,3,2))_{3e_1+3e_2},((1,3,2))_{4e_1+2e_3},((1,3,2))_{e_2+5e_3}\}.$$

	\begin{figure}[!htb]
		\minipage{0.5\textwidth}
		\includegraphics[width=.9\linewidth]{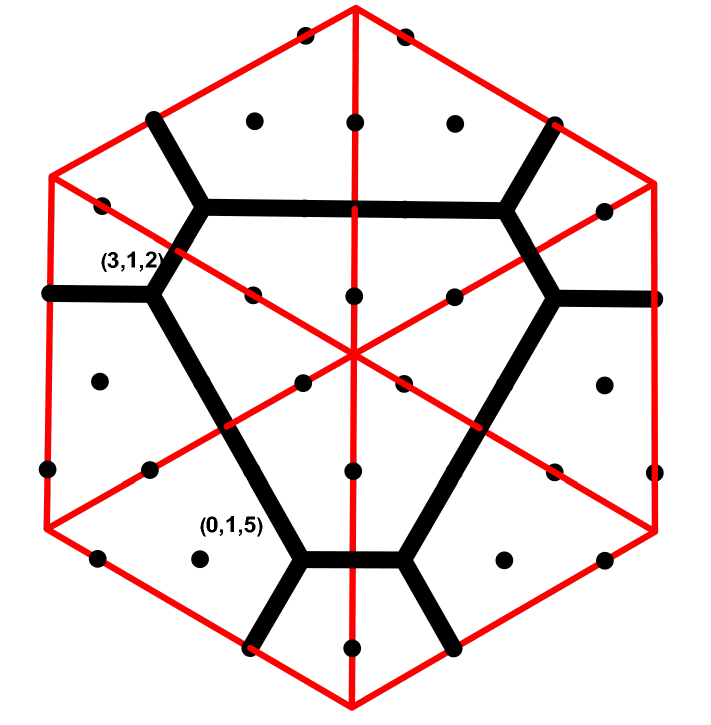}
		\caption{One choice of a (hexagonal) period; opposite edges of the red hexagon are identified. Red (medium thickness) edges are segments of affine hyperplanes, placed at $x_1-x_2 \in 3+6\mathbb{Z},\ x_2-x_3 \in 1+6\mathbb{Z},\ x_3-x_1\in 2+6\mathbb{Z}$.  Black edges (thickest) in the tiling are segments parallel to the three root directions $e_i-e_j$.  Near the vertices of the (three) weight permutohedra (with black edges of lengths respectively 1,2,3) the black shell coincides with a blade.}
		\label{fig:hexagonalbladetessellation}
		\endminipage\hfill
		\minipage{0.4\textwidth}
		\includegraphics[width=1.15\linewidth]{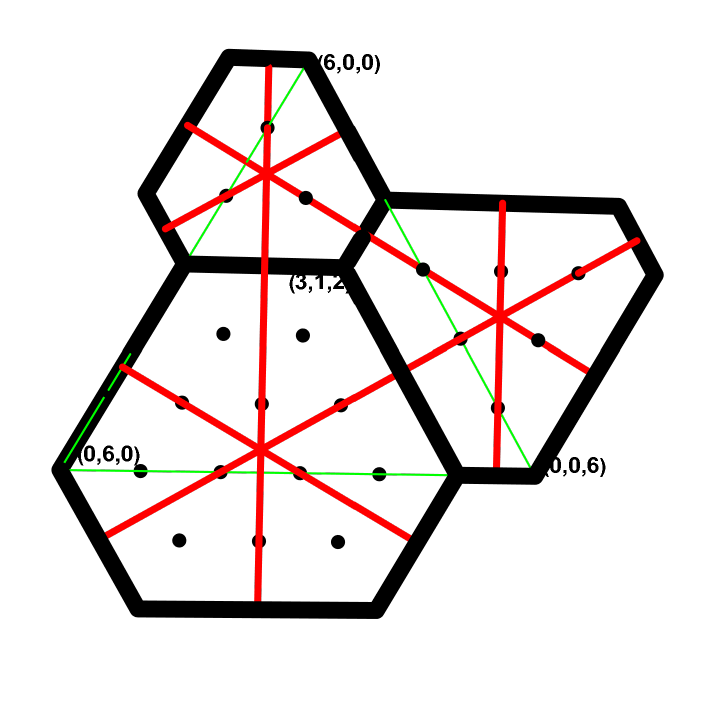}
\caption{Tessellating with three generalized (weight) permutohedra.  Here rather than taking rational-valued coordinates for the point $y$, we have dilated the Weyl alcoves by a factor 6; then the point $y=(3,1,2)$ is reflected across the (red, medium thickness) affine reflection hyperplanes placed at $x_1-x_2 \in 3+6\mathbb{Z},\ x_2-x_3 \in 1+6\mathbb{Z},\ x_3-x_1\in 2+6\mathbb{Z}$.
}		\label{fig:weightpermtessellation}

		\endminipage\hfill
	\end{figure}

\end{example}

	\begin{cor}\label{cor: arbitrary multi-split}
		For any multi-split matroid subdivision $\mathcal{S}$ of the hypersimplex $\Delta_{k,n}$, then there exists a vertex $e_J\in\Delta_{k,n}$ and a cyclic order $(\sigma_1,\ldots, \sigma_n)$ on the set $\{1,\ldots, n\}$, such that 
		$$\left(((\sigma_1,\ldots,\sigma_n))_{e_J}\right)\cap \Delta_{k,n}$$
		induces $\mathcal{S}$.
	\end{cor}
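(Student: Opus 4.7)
The plan is to invert the construction underlying Theorem \ref{thm: multisplit blade}. First, invoking Theorem 3.14 of \cite{SchroterMultisplits} (as was already used in the proof of Theorem \ref{thm: multisplit blade}), any multi-split matroid subdivision $\mathcal{S}$ of $\Delta_{k,n}$ has for its $\ell$ maximal cells the cyclic block rotations of some nested matroid polytope $\lbrack (S_1)_{s_1},\ldots,(S_\ell)_{s_\ell}\rbrack$ with $((S_1)_{s_1},\ldots,(S_\ell)_{s_\ell}) \in \mathrm{OSP}(\Delta_{k,n})$; equivalently, $\mathcal{S}$ is exactly the subdivision cut out by the hypersimplicial blade $(((S_1)_{s_1},\ldots,(S_\ell)_{s_\ell}))$.

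Next, I would construct a suitable pair $(e_J,\sigma)$ by reversing the recipe in the proof of Theorem \ref{thm: multisplit blade}. For each $j=1,\ldots,\ell$, the defining condition $1\le s_j\le |S_j|-1$ of $\mathrm{OSP}(\Delta_{k,n})$ guarantees that one may choose a subset $I_j \subsetneq S_j$ with $|I_j|=s_j$, so that its complement $C_j := S_j \setminus I_j$ inside $S_j$ is also nonempty. Define the $n$-cycle
$$\sigma = (\sigma_1,\ldots,\sigma_n) := (C_1,\, I_1,\, C_2,\, I_2,\, \ldots,\, C_\ell,\, I_\ell)$$
by concatenating these sets in that order, with any fixed internal orderings, and set $J := I_1 \cup I_2 \cup \cdots \cup I_\ell$, a $k$-element subset of $\{1,\ldots,n\}$.

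The verification is then essentially automatic. In the cyclic order $\sigma$, the set $J$ has precisely $\ell$ maximal cyclic intervals, namely $I_1,\ldots,I_\ell$, with interlaced complementary intervals $C_1,\ldots,C_\ell$. Applying Theorem \ref{thm: multisplit blade} with this cyclic order and vertex $e_J$ produces exactly the decorated ordered set partition $((I_j \cup C_j)_{|I_j|})_{j=1}^\ell = ((S_j)_{s_j})_{j=1}^\ell$, and hence
$$\bigl(((\sigma_1,\ldots,\sigma_n))_{e_J}\bigr)\cap \Delta_{k,n} = \bigl(((S_1)_{s_1},\ldots,(S_\ell)_{s_\ell})\bigr)\cap \Delta_{k,n},$$
which is the prescribed subdivision $\mathcal{S}$.

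There is essentially no obstacle beyond carefully matching conventions: the argument is a clean inversion of the correspondence established in Theorem \ref{thm: multisplit blade}, and the condition $1\le s_j\le |S_j|-1$ is exactly what is needed to make the required choices of $I_j$ and $C_j$ simultaneously available. The resulting pair $(e_J,\sigma)$ is highly non-unique, since one has independent freedom to pick an $s_j$-subset inside each $S_j$ along with internal orderings of each block; the proof simply exhibits one such witness.
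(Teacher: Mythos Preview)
Your proof is correct and follows essentially the same approach as the paper: invoke Schr\"oter's Theorem 3.14 to obtain the decorated ordered set partition, then invert the construction from Theorem \ref{thm: multisplit blade} to produce a compatible cyclic order and vertex. In fact your argument is somewhat more explicit than the paper's, which simply says to ``choose a cyclic order compatible with the ordered set partition'' and ``read off the vertex $e_J$,'' whereas you spell out the choice of $I_j \subsetneq S_j$ with $|I_j|=s_j$ and the resulting concatenation $\sigma=(C_1,I_1,\ldots,C_\ell,I_\ell)$.
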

	\begin{proof}
		We claim first that $\mathcal{S}$ is induced by a unique blade $(((S_1)_{s_1},\ldots, (S_\ell)_{s_\ell}))$, where 
		$$((S_1)_{s_1},\ldots, (S_\ell)_{s_\ell})\in\text{OSP}(\Delta_{k,n}).$$  
		
		Indeed, as in Theorem \ref{thm: multisplit blade}, translating from Theorem 3.14 of \cite{SchroterMultisplits}, any (nontrivial) multi-split of the hypersimplex $\Delta_{k,n}$ has for its maximal cells the $\ell\ge 2$ cyclic block rotations of some nested matroid $\lbrack (S_1)_{s_1},\ldots, (S_\ell)_{s_\ell}\rbrack,$ where $( (S_1)_{s_1},\ldots, (S_\ell)_{s_\ell})\in\text{OSP}(\Delta_{k,n})$.

		Choose a cyclic order that is compatible with the ordered set partition $(S_1,\ldots, S_\ell)$ (for instance, put each block in increasing order and concatenate); then the formula from Theorem \ref{thm: multisplit blade} (keeping track of the reordering of $\sigma$) we obtain the integers $s_1,\ldots, s_\ell$ for the decorated permutation $((S_1)_{s_1},\ldots, (S_\ell)_{s_\ell})$, from which we can read off the vertex $e_J$ of the hypersimplex, and we find 
		$$(((S_1)_{s_1},\ldots, (S_\ell)_{s_\ell}))\cap \Delta_{k,n} = ((\sigma_1,\ldots,\sigma_n))_{e_{J}} \cap \Delta_{k,n}.$$
	\end{proof}

\section{Weakly separated arrangements induce matroid subdivisions}
	
\begin{figure}[h!]
	\centering
	\includegraphics[width=.55\linewidth]{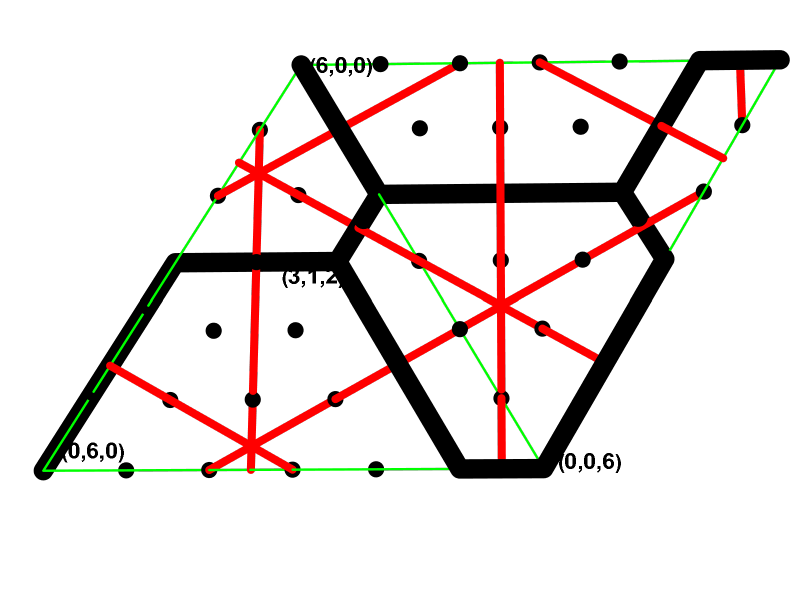}
	\caption{Same as in Figures \ref{fig:hexagonalbladetessellation} and \ref{fig:weightpermtessellation}, but shifted into the fundamental parallelepiped.  The lengths of the arms extending from the point $(3,1,2)$ add to the edge length $(=6)$ of the ambient simplex (outlined in green on bottom left).  Such subdivisions have appeared in various contexts, including \cite{PostnikovPermutohedra} and \cite{OcneanuVideo}.}
	\label{fig:weightpermtessellation2}
\end{figure}

We are interested in subsets of the set $\binom{\lbrack n\rbrack}{k}$ of $k$-element subsets of $\{1,\ldots, n\}$ which are weakly separated with respect to the natural cyclic order $(12\cdots n)$, in the sense of \cite{LeclercZelevinsky}.

\begin{defn}[\cite{LeclercZelevinsky}]\label{defn:weakly separated}
	Let $I,J\in\binom{\lbrack n\rbrack}{k}$ be given.  
	
	The subsets $I,J$ are \textit{weakly separated} if they satisfy the property that no four elements $i_1,i_2,j_1,j_2$ with $i_1, i_2\in (I\setminus J)$ and $j_1,j_2\in (J\setminus  I)$ have
	$$i_1<j_1<i_2<j_2$$
	or one of its cyclic rotations.

	If subsets $J_1,\ldots, J_m\in\binom{\lbrack n\rbrack}{k}$ are pairwise weakly separated, then $\mathcal{C} = \{J_1,\ldots, J_m\}$ is called a \textit{weakly separated collection}.
\end{defn}
In the usual geometric interpretation for $k$-element subsets, c.f. \cite{WeakSeparationPostnikov}, $I$ and $J$ are weakly separated if there exists a chord separating the sets $I\setminus J$ and $J\setminus I$ when drawn on a circle.  Identifying each $k$-element subset $J$ of $\{1,\ldots, n\}$ with the vertex $e_J$ gives rise to a notion of weak separation for arrangements of vertices of the form $\{e_{I_1},\ldots, e_{I_m}\}\subset \Delta_{k,n}$.

For any $m$-element subset $J$ of $\{1,\ldots, n\}$ where $1\le m\le k$ and any matroid polytope $\Pi\subseteq \Delta_{k,n}$, denote by $\partial_{(J,1)}(\Pi)$ the face of $\Pi$ where $x_j=1$ for all $j\in J$.  Similarly, if $I$ is an $m$-element subset with $1\le m\le n-k$, then denote by $\partial_{(I,0)}(\Pi)$ the face of $\Pi$ where $x_i=0$ for all $i\in I$.

Note that in Lemma \ref{lem: to the boundary hypersimplex}, the intersection may be completely uninteresting: it might not even induce a nontrivial subdivision!  Indeed, such degeneration is necessarily present, by the simple fact that a weakly separated collection of $k$-element subsets can have more elements than is possible for a weakly separated collection of $(k-1)$-element subsets.

For our present purposes, we may always discard such intersections when they appear -- see Example \ref{example: degenerating blade arrangements} where such behavior is illustrated.

\begin{lem}\label{lem: to the boundary hypersimplex}
	We have
	$$\partial_{(\{j\},1)}\left(((1,2,\ldots, n))_{e_{I}}\cap\Delta_{k,n}\right)= ((1,2,\ldots, \widehat{j},\ldots, n))_{e_{I'}}\cap \partial_{(\{j\},1)}(\Delta_{k,n}),$$
	where if $i_a < j \le i_{a+1}$ then $I' = I\setminus\{i_{a+1}\}$, and where the indices are cyclic as usual.  Moreover, if $\Pi \subseteq \Delta_{k,n}$ is any matroid polytope, then for any (nonempty) subset $J=\{j_1,\ldots, j_t\}\in \binom{n}{t}$ with $t\le k$, then the boundary 
	$$\partial_{(J,1)}(\Pi) := \partial_{(\{j_1\},1)}(\cdots (\partial_{(\{j_t\},1)}(\Pi))\cdots )$$
	is independent of the order of composition.  
\end{lem}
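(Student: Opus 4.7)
The approach for the first identity is to reduce both sides to decorated ordered set partition blades via Theorem~\ref{thm: multisplit blade} and match chamber structures.

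Write $I = I_1 \sqcup \cdots \sqcup I_\ell$ as the decomposition of $I$ into its maximal cyclic $1$-intervals, with interleaved $0$-intervals $C_1, \ldots, C_\ell$ where $C_a$ cyclically precedes $I_a$ (matching the convention of the worked example $((1_1 2_1 3_0 4_1 5_0 6_0 7_1 8_0)) = ((128_2\, 34_1\, 567_1))$ recalled earlier in the paper). By Theorem~\ref{thm: multisplit blade},
$$((1, \ldots, n))_{e_I} \cap \Delta_{k,n} = (((S_1)_{s_1}, \ldots, (S_\ell)_{s_\ell})) \cap \Delta_{k,n},$$
with $S_a = I_a \cup C_a$ and $s_a = |I_a|$. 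Denote by $S_p$ the unique block containing $j$.

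Next, restrict each chamber $[(S_b)_{s_b}, (S_{b+1})_{s_{b+1}}, \ldots, (S_{b-1})_{s_{b-1}}]$ to the face $x_j = 1$. Each defining inequality $x_{S_b \cup \cdots \cup S_{b+m}} \geq s_b + \cdots + s_{b+m}$ either remains unchanged (when $j$ lies outside the cumulative union) or becomes $x_{(S_b \cup \cdots \cup S_{b+m}) \setminus \{j\}} \geq s_b + \cdots + s_{b+m} - 1$. Rewriting the final inequality via $x_{[n]} = k$ as $x_{S_{b-1}} \leq s_{b-1}$, it becomes $x_{S_p \setminus \{j\}} \leq s_p - 1$ precisely when $b \equiv p + 1 \pmod{\ell}$, i.e., when $S_p$ is the last block in the cyclic order of the chamber. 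This constraint forces a face of $\Delta_{k-1, [n] \setminus \{j\}}$ exactly when $s_p - 1 = 0$, i.e., when $|I_p| = 1$; in that case, the corresponding chamber degenerates, and otherwise all $\ell$ chambers restrict to full-dimensional polytopes with readily identifiable facet inequalities.

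Then apply Theorem~\ref{thm: multisplit blade} to $((1, \ldots, \widehat{j}, \ldots, n))_{e_{I'}}$ on $\Delta_{k-1, [n] \setminus \{j\}}$ with $I' = I \setminus \{i_{a+1}\}$ to extract its decorated OSP. The uniform formula for $I'$ handles both cases at once: when $j \in I$ we have $j = i_{a+1}$ and we delete $j$ itself from $I_p$; when $j \notin I$ we delete the leading element $i_{a+1}$ of $I_p$, and the ambient cyclic sequence simultaneously drops $j$. In either case, $I_p$ shrinks by one element and disappears precisely when $|I_p| = 1$, so the number of $1$-intervals on the smaller cyclic sequence is $\ell$ or $\ell - 1$, matching the count of full-dimensional restricted LHS chambers. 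A direct block-by-block comparison then verifies that the resulting decorated OSP on $\Delta_{k-1, [n] \setminus \{j\}}$ agrees with the OSP obtained from the full-dimensional LHS restrictions, establishing the equality of subdivisions.

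For the second identity, by the definition of $\partial_{(J,1)}$ as the face of $\Pi$ on which $x_j = 1$ for every $j \in J$, the iterated boundary equals $\Pi \cap \bigcap_{j \in J}\{x : x_j = 1\}$, which is manifestly independent of the order of composition. The main obstacle in the first part is the bookkeeping required for the block-by-block OSP comparison: the case split on whether $j \in I$ and on $|I_p|$ must be carefully tracked, but the cyclic-successor formula $I' = I \setminus \{i_{a+1}\}$ is the key unification that makes the two cases collapse into the same combinatorial statement.
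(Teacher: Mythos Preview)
Your proposal is correct and follows essentially the same approach as the paper: both reduce via Theorem~\ref{thm: multisplit blade} to the decorated ordered set partition $(((S_1)_{s_1},\ldots,(S_\ell)_{s_\ell}))$, set $x_j=1$ in the defining inequalities, and split into the cases $s_p=|I_p|\ge 2$ (block shrinks) versus $s_p=1$ (block merges with its cyclic successor), then identify the result with the OSP coming from $I'=I\setminus\{i_{a+1}\}$. The paper organizes the computation around the blade's facet inequalities directly and illustrates the two sub-cases $j\in I$, $j\notin I$ via worked examples, whereas you frame it as restricting the chambers and matching them to the RHS chambers; these are the same computation viewed dually, and your observation that the cyclic-successor rule $I'=I\setminus\{i_{a+1}\}$ unifies the two sub-cases is exactly the point the paper makes after its examples.
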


\begin{proof}
	The independence of the composition order is obvious geometrically.  We prove the formula for $I'$ when $J$ is a singleton.
	
	Let $((S_1)_{s_1},\ldots, (S_\ell)_{s_\ell}) \in \text{OSP}(\Delta_{k,n})$ be the decorated ordered set partition determined by the intersection with the hypersimplex,
	$$((1,2,\ldots, n))_{e_{I}}\cap \Delta_{k,n} = (((S_1)_{s_1},\ldots, (S_\ell)_{s_\ell}))\cap \Delta_{k,n}.$$
	Let us first suppose that $j$ is in some block $S_g$, with $s_g\ge 2$.  According to the construction of Theorem \ref{thm: multisplit blade}, the condition $s_g\ge 2$ means that the cyclic interval containing $j$ in the set $I$ has length at least two.  From Definition \ref{defn:blade} it follows immediately that we have, correspondingly,
	\begin{eqnarray*}
	& & \partial_{(\{j\},1)}\left((((S_1)_{s_1},\ldots,(S_g)_{s_g},\ldots,  (S_\ell)_{s_\ell})) \cap \Delta_{k,n}\right)\\
	 & = &  (((S_1)_{s_1},\ldots,(S_g\setminus \{j\})_{s_g-1},\ldots,  (S_\ell)_{s_\ell})) \cap \partial_{(\{j\},1)}(\Delta_{k,n})\\
	& = & ((1,2,\ldots,\widehat{j},\ldots, n))_{e_{I\setminus \{p\}}}\cap \partial_{(\{j\},1)}(\Delta_{k,n}),
	\end{eqnarray*}
	where $p=j$ if $j\in I$ and otherwise $p$ is the (cyclically) next element after $j$ in $I$.
	
	For clarity we shall justify the last line by working the two possible cases, respectively $j\in I$ and $j\not\in I$, for a particular blade arrangement.  With $n=8$ and $I = \{1,3,4,5,7\}$, we find
	\begin{eqnarray*}
		((1,2,\ldots, 8))_{e_I}\cap \Delta_{5,8} & = & ((18_1 2345_3 67_1)) \cap \Delta_{5,8},
	\end{eqnarray*}
	so that 
	\begin{eqnarray*}
		\partial_{(\{4\},1)}\left(((1,2,\ldots, 8))_{e_I}\cap \Delta_{5,8}\right) & = & \partial_{(\{4\},1)}\left(((18_1 2345_3 67_1)) \cap \Delta_{5,8}\right)\\
		& = & ((18_1 235_2 67_1)) \cap \partial_{(\{4\},1)}(\Delta_{5,8})\\
		& = & ((1,2,3,5,6,7,8))_{e_{1357}}\cap \partial_{(\{4\},1)}(\Delta_{5,8}).
	\end{eqnarray*}
	Here $j=4\in I$ and we replace $I$ with $I' = I\setminus\{4\}$.
	
	Similarly, 
		\begin{eqnarray*}
		\partial_{(\{2\},1)}\left(((1,2,\ldots, 8))_{e_I}\cap \Delta_{5,8}\right) & = & \partial_{(\{2\},1)}\left(((18_1 2345_3 67_1)) \cap \Delta_{5,8}\right)\\
		& = & ((18_1 345_2 67_1)) \cap \partial_{(\{2\},1)}(\Delta_{5,8})\\
		& = & ((1,3,4,5,6,7,8))_{e_{1457}}\cap \partial_{(\{2\},1)}(\Delta_{5,8}).
	\end{eqnarray*}
	Here $j=2\not\in I$ and we replace $I$ with $I' = I\setminus\{3\}$.
	
	Supposing now that $j\in S_g$ with $s_g = 1$, then setting $x_j=1$ gives for the inequalities defining the blade,
	$$x_{S_g} \ge s_g=1 \Leftrightarrow x_{(S_g\setminus \{j\})} \ge 0,$$
	which is a trivial consequence that we are in $x\in\Delta_{k,n}$ (in particular all coordinates of $x$ are already nonnegative).  This means that $S_g$ becomes $S_g\setminus \{j\}$ and joins the cyclically next block:
	\begin{eqnarray*}
	((S_1)_{s_1},\ldots, (S_g)_{s_g=1},\ldots, (S_\ell)_{s_\ell}) & \mapsto & ((S_1)_{s_1},\ldots, ((S_g \setminus\{j\})\cup S_{g+1})_{s_{g+1}},\ldots, (S_\ell)_{s_\ell})\\
	& = & ((1,2,\ldots,\widehat{j},\ldots, n))_{e_{I\setminus \{p\}}},
\end{eqnarray*}
	where if $j\in I$ then $I' = I\setminus \{j\}$, and otherwise $I' = I\setminus \{p\}$ where $p\in I$ comes cyclically after $j$.
	
\end{proof}

\begin{example}\label{example: degenerating blade arrangements}
	Consider the blade $((1,2,3,4,5,6))$ arranged on the collection of vertices $$\{e_{124},e_{246},e_{256},e_{346}\}$$ of $\Delta_{3,6}$.  These coincide on $\Delta_{3,6}$ with the blades respectively 
	$$((1256_2 34_1)),\ ((12_1 34_1 56_1)),\ ((12_1 3456_2)),\ ((1234_2 56_1)).$$
	On the boundary where $x_3=1$, say, we find 
	$$((1256_2)),\ ((12_1 456_1)),\ ((12_1 456_1)),\ ((124_1 56_1)).$$
	Discarding the trivial subdivision and deleting duplicates we obtain a pair of compatible splits
	$$((12_1 456_1))\cap \partial_{(\{3\},1)}(\Delta_{3,6}) = ((1,2,4,5,6))_{e_{26}}\cap \partial_{(\{3\},1)}(\Delta_{3,6})$$
	and
	$$((124_1 56_1))\cap \partial_{(\{3\},1)}(\Delta_{3,6}) = ((1,2,4,5,6))_{e_{46}}\cap \partial_{(\{3\},1)}(\Delta_{3,6}).$$
	In this case this corresponds to the maximal weakly separated collection of 2-element subsets of $\{1,2,4,5,6\}$,
	$$\{\{2,6\},\{4,6\}\}.$$
	Using the rule from Lemma \ref{lem: to the boundary hypersimplex}, setting $x_3 \rightarrow 1$, the vertex arrangement changes as follows:
	$$e_{124}\mapsto e_{12},\ e_{246}\mapsto e_{26},\ e_{256} \mapsto e_{26},\ e_{346}\mapsto e_{46},$$
	and after removing frozen vertices and removing duplicates we recover the pair $\{e_{26},e_{46}\}$ as above.
	
\end{example}

In Corollary \ref{cor: subdivision arrangement boundary} we first review the construction of the decorated ordered set partition from a vertex $e_I\in \Delta_{k,n}$ from Theorem \ref{thm: multisplit blade}, and then state the matroid subdivision on each boundary component of $\Delta_{k,n}$ where $x_j=1$.

\begin{cor}\label{cor: subdivision arrangement boundary}
	Choose any vertex $e_I\in\Delta_{k,n}$ with cyclic invervals, say,
	$$I=\{i_1,i_1+1,\ldots, i_1+({\lambda_1}-1)\} \cup \{i_2,i_2+1,\ldots, i_2+({\lambda_2}-1)\}\cup \cdots\cup \{i_{\ell},i_{\ell}+1,\ldots, i_\ell + (\lambda_{\ell}-1)\},$$
	so that we have $\lambda_1+\cdots+ \lambda_\ell = k$.  Set $I_j = \{i_j,i_j+1,\ldots, i_j+({\lambda_j}-1)\} $.  Let $(C_1,\ldots, C_\ell)$ be the interlaced complement to the intervals $I$, so that we have the concatenation
	$$(1,2,\ldots, n) = (C_1,I_1,C_2,I_2,\ldots, C_\ell,I_\ell).$$
	By Theorem \ref{thm: multisplit blade} we have 
	$$\left(((1,2,\ldots, n))_{e_{I}}\right)\cap \Delta_{k,n} = \left(((S_1)_{s_1},(S_2)_{s_2},\ldots, (S_\ell)_{s_\ell})\right)\cap \Delta_{k,n},$$
	where $(\mathbf{S}_{\mathbf{s}}) = ((S_1)_{s_1},\ldots, (S_\ell)_{s_\ell}) \in \text{OSP}(\Delta_{k,n})$ is the decorated ordered set partition defined by $(S_j,s_j) = (I_j\cup C_j , \vert I_j \vert )$. 
	
	By Lemma \ref{lem: to the boundary hypersimplex}, for each $j=1,\ldots, n$, we have	
	$$\partial_{(\{j\},1)}\left(((1,2,\ldots, n))_{e_I}\cap \Delta_{k,n}\right) = ((1,2,\ldots, \widehat{j},\ldots, n))_{e_{I'}}\cap \partial_{(\{j\},1)}(\Delta_{k,n}),$$
	where $I'$ is given by Lemma \ref{lem: to the boundary hypersimplex}.  
	
		\vspace{.5cm}
	Then, on the boundary $\partial_{(\{j\},1)}(\Delta_{k,n})$ we have the matroid subdivision with maximal cells the $\ell'$ cyclic block rotations of 
	$$\lbrack (T_1)_{t_1},(T_2)_{t_2},\ldots, (T_{\ell'})_{t_{\ell'}}\rbrack$$
	where $((T_1)_{t_1},(T_2)_{t_2},\ldots, (T_{\ell'})_{t_{\ell'}})\in\text{OSP}\left(\partial_{(\{j\},1)}(\Delta)\right)$ is defined as follows.
	\begin{enumerate}
		\item If $j\in S_a$, say,  and $s_a=1$ then replace $((S_a)_{1},(S_{a+1})_{s_{a+1}})$ with $((S_a \setminus \{j\}) \cup (S_{a+1}))_{s_{a+1}}$.  In this case $\ell' = \ell-1$.
		\item If $j\in S_a$ and $s_a\ge 2$ then replace $(S_a)_{s_a}$ with $((S_a\setminus\{j\})_{s_a-1})$.  In this case $\ell'=\ell$.
	\end{enumerate}
	Now we concatenate the resulting decorated blocks and obtain $\lbrack (T_1)_{t_1},(T_2)_{t_2},\ldots, (T_{\ell'})_{t_{\ell'}}\rbrack$.
\end{cor}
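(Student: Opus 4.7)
The plan is to obtain the statement by chaining Theorem \ref{thm: multisplit blade} with Lemma \ref{lem: to the boundary hypersimplex} and then translating the resulting geometric operation into an explicit manipulation of the decorated ordered set partition. First, Theorem \ref{thm: multisplit blade} gives directly
\[
\bigl(((1,2,\ldots,n))_{e_I}\bigr)\cap \Delta_{k,n} = \bigl(((S_1)_{s_1},\ldots,(S_\ell)_{s_\ell})\bigr)\cap\Delta_{k,n},
\]
with $(S_j,s_j) = (I_j\cup C_j,|I_j|)$, while Lemma \ref{lem: to the boundary hypersimplex} identifies the result of taking the $x_j=1$ face of this subdivision with a translated blade of one lower rank on $\partial_{(\{j\},1)}(\Delta_{k,n})$. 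The only remaining task is to read off the decorated ordered set partition of the latter from the original one.

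To do so, I would locate the (unique) block $S_a$ containing $j$ and inspect what happens to the nested family of facet inequalities $x_{S_1\cup\cdots\cup S_b}\ge s_1+\cdots+s_b$ when we substitute $x_j=1$. The substitution subtracts $1$ from both sides of every inequality with $b\ge a$, and leaves all inequalities with $b<a$ untouched (their left-hand sides do not involve $x_j$). Thus the only inequality whose status can change is the one indexed by $b=a$, which becomes $x_{(S_a\setminus\{j\})\cup S_{a+1}\cup\cdots}\ge (s_a-1)+\cdots$ after collecting. If $s_a=1$, then the single inequality $x_{S_a}\ge 1$ collapses to $x_{S_a\setminus\{j\}}\ge 0$, which is vacuous on the hypersimplex, so the block $S_a\setminus\{j\}$ is absorbed into $S_{a+1}$ keeping the decoration $s_{a+1}$; this is case (1), and it reduces the number of blocks by one. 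If $s_a\ge 2$, then the inequality stays nontrivial, becoming $x_{S_a\setminus\{j\}}\ge s_a-1$, so the block $(S_a)_{s_a}$ is simply replaced by $(S_a\setminus\{j\})_{s_a-1}$; this is case (2), and the number of blocks is preserved. Concatenating the (possibly modified) decorated blocks in the inherited cyclic order produces the claimed decorated ordered set partition $((T_1)_{t_1},\ldots,(T_{\ell'})_{t_{\ell'}})$, and the $\ell'$ cyclic block rotations of $\lbrack (T_1)_{t_1},\ldots,(T_{\ell'})_{t_{\ell'}}\rbrack$ are then the maximal cells of the induced subdivision by the second sentence of Theorem \ref{thm: multisplit blade} applied in the smaller hypersimplex.

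The only thing to check carefully is validity of the new decorated ordered set partition as an element of $\text{OSP}(\partial_{(\{j\},1)}(\Delta_{k,n}))$, which amounts to the inequalities $1\le t_i\le |T_i|-1$. In case (2), $1\le s_a-1\le |S_a|-2 = |S_a\setminus\{j\}|-1$ follows from $s_a\ge 2$ and $s_a\le |S_a|-1$; in case (1), enlarging $S_{a+1}$ by the (possibly empty) set $S_a\setminus\{j\}$ only weakens the upper bound constraint. I do not anticipate a deep obstacle: the content of the corollary is essentially bookkeeping that repackages the two prior results, and the main care is in handling the degenerate subcase $S_a=\{j\}$ of case (1) (where the block is simply removed and $S_{a+1}$ replaces it) and confirming that the cyclic order of the remaining blocks is inherited unambiguously.
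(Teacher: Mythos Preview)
Your proposal is correct and mirrors exactly the argument the paper uses: the corollary is stated in the paper without a separate proof, since its content is precisely the bookkeeping already carried out inside the proof of Lemma \ref{lem: to the boundary hypersimplex} (the two cases $s_a=1$ and $s_a\ge 2$ for what happens to the block containing $j$ under the substitution $x_j=1$), combined with Theorem \ref{thm: multisplit blade}. Your analysis of how the facet inequalities transform and why the $s_a=1$ inequality becomes vacuous is the same computation as in the paper; the only cosmetic point is that the identification of the maximal cells as the cyclic block rotations is Proposition \ref{prop: nestedmatroids from blade} rather than a sentence of Theorem \ref{thm: multisplit blade}.
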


\begin{rem}
	As of this writing, we have not yet considered the natural general question, in which cases (or if always) one obtains, by restricting to a facet $x_j=1$ a blade arrangement which corresponds to a maximal weakly separated arrangement of $k$-element subsets of an $n$-element set, a blade arrangement which corresponds to a maximal weakly separated collection of $(k-1)$-element subsets of an $(n-1)$ element set.  Here we have adopt the convention that we do not count the ``frozen'' sets $\{i,i+1,\ldots, j\}$ as possible elements in a weakly separated collection.  Based in part on experiment we expect the answer to be that a maximal weakly separated collection of $k$-element subsets maps to a maximal weakly separated collection of $(k-1)$-element subsets, but the proof was beyond the scope of this paper and is left to future work.
	
	However we \textit{have} checked explicitly in Mathematica that maximal weakly separated collections for $\Delta_{3,6}$ and $\Delta_{3,7}$ map via Lemma \ref{lem: to the boundary hypersimplex} to respectively maximal weakly separated collections for $\Delta_{2,5}$ and $\Delta_{2,6}$, where as usual we have to disregard frozen sets and eliminate redundancy.  For instance, for $\Delta_{3,7}$, the 259 maximal weakly separated collections of six 3-element subsets of $\{1,\ldots, 7\}$ map (with varying multiplicity) to the 14 maximal weakly separated collections of three 2-element subsets of $\{1,\ldots, \widehat{j},\ldots, 7\}$ for each $j=1,\ldots, 7$.
\end{rem}

\begin{rem}\label{rem:subsetweaklyseparable}
	It is easy to see that if $\{I,J\} \subset \binom{\lbrack n\rbrack}{k}$ is a weakly separated pair, and we have $\{I',J'\} \subset \binom{\lbrack n\rbrack}{k-1}$ with $I'\subset I$ and $J'\subset J$, then $\{I',J'\}$ is a weakly separated pair.
\end{rem}

Basic considerations in matroid theory imply Proposition \ref{prop: secondhypersimplex test}; nonetheless we sketch a proof.

\begin{prop}\label{prop: secondhypersimplex test}
	A polytope $\Pi \subset \Delta_{k,n}$ with $k\ge 3$ is a matroid polytope if and only if every (nonempty) face $\partial_{(\{j\},1)} (\Pi) \subseteq \partial_{(\{j\},1)} (\Delta_{k,n}) \simeq \Delta_{k-1,n-1}$ is a matroid polytope, for all facets $x_j=1$, as $j=1,\ldots, n$.
\end{prop}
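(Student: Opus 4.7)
The plan is to verify the Gelfand--Serganova edge criterion, which characterizes matroid polytopes among $\{0,1\}$-polytopes with vertices in $\Delta_{k,n}$ as those whose every edge is parallel to a root $e_i-e_j$. This reduces the proposition to a combinatorial statement about pairs of adjacent vertices of $\Pi$.

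For the forward direction, I would observe that if $\Pi$ is the matroid polytope of a matroid $M$ on $[n]$ with basis set $\mathcal{B}$, then $\partial_{(\{j\},1)}(\Pi)$ is the convex hull of $\{e_I : I \in \mathcal{B},\ j \in I\}$. Under the coordinate-deletion isomorphism $\partial_{(\{j\},1)}(\Delta_{k,n}) \simeq \Delta_{k-1,n-1}$, this face identifies with the matroid polytope of the contraction $M/j$, which is itself a matroid provided $j$ is not a loop; nonemptiness of the face guarantees this. This is nothing more than the standard fact that contractions of matroids are matroids.

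For the backward direction, assume every nonempty face $\partial_{(\{j\},1)}(\Pi)$ is a matroid polytope, and let $\overline{e_I e_J}$ be an arbitrary edge of $\Pi$; I will show $|I \triangle J| = 2$. In the generic case where $I \cap J \neq \emptyset$, pick any $a \in I \cap J$. Since both endpoints lie on the supporting hyperplane $\{x_a = 1\}$, the whole edge lies in the face $\partial_{(\{a\},1)}(\Pi)$, and since faces of faces are faces it is itself an edge of this smaller polytope. Under the iso it becomes the segment from $e_{I \setminus \{a\}}$ to $e_{J \setminus \{a\}}$ in $\Delta_{k-1,n-1}$; the matroid hypothesis on the face then forces this to be a root edge, yielding $|I \triangle J| = 2$ as required.

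The main obstacle is the residual case $I \cap J = \emptyset$, in which no common facet $\{x_j = 1\}$ contains both endpoints, so the reduction above is unavailable. Here I would invoke the hypothesis $k \ge 3$: fix any $a \in I$ and apply the matroid structure on $\partial_{(\{a\},1)}(\Pi)$, together with the existence of the edge to $e_J$ in $\Pi$, to produce a neighbor $e_{I'}$ of $e_I$ in $\mathcal{B}$ satisfying $|I' \triangle I| = 2$ and $I' \cap J \neq \emptyset$. Once such an $e_{I'}$ is obtained, one reduces to the first case by examining the edge (if any) of $\Pi$ from $e_{I'}$ toward $e_J$, and argues by convexity that the existence of $e_{I'}$ inside $\Pi$ (lying off the segment $\overline{e_I e_J}$ but in a common face with both endpoints) is incompatible with $\overline{e_I e_J}$ being an edge, yielding the desired contradiction and completing the Gelfand--Serganova verification.
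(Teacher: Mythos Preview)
Your forward direction and the main case of the backward direction (when $I\cap J\neq\emptyset$) are correct and cleanly argued. The genuine gap is the residual case $I\cap J=\emptyset$: the mechanism you outline does not work. Take $\Pi=\overline{e_{123}\,e_{456}}\subset\Delta_{3,6}$. Every nonempty face $\partial_{(\{j\},1)}(\Pi)$ is a single vertex, hence trivially a matroid polytope, yet $\Pi$ itself is not one (its only edge is parallel to $e_1+e_2+e_3-e_4-e_5-e_6$). Against this example your first step already fails: for any $a\in I=\{1,2,3\}$ the face $\partial_{(\{a\},1)}(\Pi)=\{e_{123}\}$ is a point, so ``applying the matroid structure'' there yields no neighbour $e_{I'}$ whatsoever, and indeed no $I'\in\mathcal{B}=\{I,J\}$ with $|I'\triangle I|=2$ exists. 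More generally, nothing in the hypotheses forces $\partial_{(\{a\},1)}(\Pi)$ to contain any vertex other than $e_I$, and even if such an $e_{I'}$ did exist, you give no reason why it would lie in a face of $\Pi$ containing both $e_I$ and $e_J$, which is what your convexity contradiction requires. (The segment example also shows the proposition needs an implicit hypothesis such as full-dimensionality, which is satisfied in the paper's application to maximal cells of a subdivision; but that is a separate issue from the specific failure of your outlined argument.)

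The paper takes a different route: rather than checking edges, it cites Maurer's theorem to reduce the matroid property to the basis exchange relations on \emph{octahedral} faces (those isomorphic to $\Delta_{2,4}$), and then observes that for $k\ge 3$ every octahedral face of $\Delta_{k,n}$ already lies in some hyperplane $x_j=1$, because an octahedron is obtained by freezing $k-2\ge 1$ coordinates at $1$ and $n-k-2$ coordinates at $0$. Thus the hypothesis on the facets $\partial_{(\{j\},1)}(\Pi)$ immediately controls every octahedron, sidestepping your case analysis on individual edges. This is only a sketch in the paper as well, and is subject to the same caveat about implicit hypotheses, but the reduction to octahedra is the key idea you are missing.
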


\begin{proof}[Sketch of proof]
	Translating from \cite{Maurer}, the basis exchange relations may be checked on each octahedral face of the candidate matroid polytope $\Pi\subseteq\Delta_{k,n}$.  In particular, for $k\ge3$, then every octahedral face of $\Pi$ is also in some facet $\partial_{(\{j\},1)}(\Pi)$.  The result follows.
\end{proof}

Combinatorially speaking, our second main result, Theorem \ref{thm: weakly separated matroid subdivision blade}, shows that blades provide the vehicle to formulate weak separation for $k$-element subsets in terms of matroid subdivisions.  Theorem \ref{thm: weakly separated matroid subdivision blade} also generalizes the notion of compatibility for splits to multi-split matroid subdivisions of any hypersimplex.

Once we prove the equivalence for weakly separated pairs in Lemma \ref{lem: weakly separated matroid subdivision blade} the equivalence for $m$-element collections is fairly straightforward to deduce.
\begin{lem}\label{lem: weakly separated matroid subdivision blade}
	Given a pair of vertices $e_{I_1},e_{I_2}\in\Delta_{k,n}$, the blade arrangement 
	$$\{((1,2,\ldots, n))_{e_{I_1}},((1,2,\ldots, n))_{e_{I_2}}\}$$
	induces a matroid subdivision of $\Delta_{k,n}$ if and only if $\{I_1,I_2\}$ is weakly separated.	
\end{lem}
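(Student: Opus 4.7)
My plan is to induct on $k$, reducing the lemma for $\Delta_{k,n}$ to that for $\Delta_{k-1,n-1}$. The inductive mechanism is Proposition~\ref{prop: secondhypersimplex test}: for $k\ge 3$ a polytope in $\Delta_{k,n}$ is a matroid polytope iff each of its faces $x_j=1$ is. Combined with Lemma~\ref{lem: to the boundary hypersimplex}, the restriction of the two-blade arrangement to $\partial_{(\{j\},1)}(\Delta_{k,n})\simeq\Delta_{k-1,n-1}$ is precisely the analogous blade arrangement on the reduced vertices $e_{I_1'}$ and $e_{I_2'}$. The inductive step therefore reduces the matroidality question to the equivalent one on $\Delta_{k-1,n-1}$, provided the weak-separation condition also transfers properly under restriction.

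For the base case $k=2$ (with the dual $k=n-2$ handled by the complementation $I\mapsto I^c$), I would invoke Corollary~\ref{cor:blade splits}. If some $I_a$ is a cyclic interval, Theorem~\ref{thm: multisplit blade} says the blade at $e_{I_a}$ induces the trivial subdivision and $I_a$ is weakly separated from every 2-subset, so both sides of the equivalence hold. Otherwise Theorem~\ref{thm: multisplit blade} identifies $((1,\ldots,n))_{e_{I_a}}\cap\Delta_{2,n}$ with the hypersimplicial blade $(((S_a)_1,(S_a^c)_1))\cap\Delta_{2,n}$, inducing a 2-split whose two blocks $S_a,S_a^c$ are the cyclic arcs of $[n]$ cut out by the two elements of $I_a$. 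Corollary~\ref{cor:blade splits} then says the pair is compatible iff one of the four intersections $S_1\cap S_2$, $S_1\cap S_2^c$, $S_1^c\cap S_2$, $S_1^c\cap S_2^c$ is empty; a direct check shows this is equivalent to the chord between the two elements of $I_1$ not crossing the chord between the two elements of $I_2$, i.e., to weak separation of $\{I_1,I_2\}$.

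For the inductive step $k\ge 3$, combining Proposition~\ref{prop: secondhypersimplex test}, Lemma~\ref{lem: to the boundary hypersimplex}, and the inductive hypothesis, the lemma reduces to the combinatorial equivalence: $\{I_1,I_2\}$ is weakly separated in $\binom{[n]}{k}$ iff $\{I_1',I_2'\}$ is weakly separated in $\binom{[n]\setminus\{j\}}{k-1}$ for every $j\in[n]$. The forward direction follows from Remark~\ref{rem:subsetweaklyseparable}, since $I_a'\subseteq I_a$ and the induced cyclic order on $[n]\setminus\{j\}$ preserves any interleaving. For the contrapositive of the reverse direction, given a witness $i_1<j_1<i_2<j_2$ of non-weak-separation with $i_1,i_2\in I_1\setminus I_2$ and $j_1,j_2\in I_2\setminus I_1$, I would select $j$ so that the witness survives the restriction prescribed by Lemma~\ref{lem: to the boundary hypersimplex}. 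The cleanest choice is $j\in I_1\cap I_2$, which removes $j$ from both sets without touching the witness; when $I_1\cap I_2=\emptyset$, I would pick $j$ from $[n]\setminus(I_1\cup I_2)$ (available when $n>2k$) or carefully from $I_1\triangle I_2$, using Lemma~\ref{lem: to the boundary hypersimplex} to track the cyclically-next element $p_a\in I_a$ that gets removed and ensuring $p_a$ avoids the witness.

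The main obstacle is this witness-preservation argument in the contrapositive, which requires a careful case analysis on the relative cyclic positions of $I_1$ and $I_2$ and becomes tightest when $n$ is close to $2k$. The genuinely small case where the witness exhausts $[n]$ reduces to $(k,n)=(2,4)$ and is resolved by Example~\ref{example: octahedral nonmatroidal blade arrangement}, where the non-weakly-separated pair $\{\{1,3\},\{2,4\}\}$ already produces the prototypical non-matroidal octahedral triangulation.
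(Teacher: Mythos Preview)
Your approach is essentially identical to the paper's: induct on $k$, settle $k=2$ via Corollary~\ref{cor:blade splits}, and for $k\ge 3$ pass to each facet $x_j=1$ using Lemma~\ref{lem: to the boundary hypersimplex} together with Proposition~\ref{prop: secondhypersimplex test}, handling the forward direction via Remark~\ref{rem:subsetweaklyseparable}. For the contrapositive the paper makes the same move you describe---choose $j$ so that the elements deleted from $I_1,I_2$ avoid the witness $\{g_1,g_2\}$ and $\{h_1,h_2\}$---and, like you, does not fully spell out the case analysis; your observation that any $j\in I_1\cap I_2$ works (since then $p_a=j$ lies in neither symmetric difference) is a clean way to dispose of the generic case.
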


\begin{proof}
	Our plan is to induct on $k\ge 2$.
	
	So let $k=2$ and $n\ge 4$.  Here the equivalence between weak separation and compatibility of 2-splits of $\Delta_{2,n}$ is obvious, particularly so if the intervals are drawn on a circle.  Nonetheless, for concreteness we include the proof.
	
	Each pair $\mathcal{C}=\{\{i_1,i_2\}, \{j_1,j_2\}\}$ determines four cyclic intervals,
	\begin{eqnarray*}
		U & = & (i_1+1,\ldots, i_2),\ U^c = (i_2+1,\ldots, i_1)\\
		V & = & (j_1+1,\ldots, j_2),\ U^c = (j_2+1,\ldots, j_1).
	\end{eqnarray*}
	If the pair $\mathcal{C}$ is \textit{not} weakly separated, then we have the cyclic order $i_1<j_1<i_2<j_2$, so $i_2\in U\cap V$, $j_1\in U\cap V^c$, $i_2\in U^c\cap V$ and $j_2\in U^c\cap V^c$, so all four intersections 
	\begin{eqnarray*}\label{eq: four intersections}
		U\cap V, & \  &U\cap V^c\\
		U^c\cap V,&\ & U^c\cap V^c
	\end{eqnarray*}
	are nonempty.  Conversely, if all four intersections are nonempty, we can recover the pairs $\{i_1,i_2\}, \{j_1,j_2\}$ by taking the cyclically last elements in each interval $U,U^c, V,V^c$.  Consequently, we have shown that $\mathcal{C}$ is weakly separated if and only if at least one of the four intersections of Equation \eqref{eq: four intersections} is empty; by Corollary \ref{cor:blade splits} this is exactly the compatibility condition for 2-splits of $\Delta_{2,n}$ from \cite{Hiroshi}.  This, together with Theorem \ref{thm: multisplit blade}, shows that 
	$$((1,2,\ldots, n))_{e_{i_1,i_2}}, ((1,2,\ldots, n))_{e_{j_1,j_2}}$$
	is matroidal if and only if $\{\{i_1,i_2\},\{j_1,j_2\}\}$ is weakly separated.  This completes the base step of the induction.
	
	Now take $k\ge 3$ with $n\ge k+2$. Suppose first that $\{I_1,I_2\}\subset \binom{\lbrack n\rbrack}{k}$ is weakly separated.
	
	Let $(\mathbf{S})_{\mathbf{s}} = ((S_1)_{s_1},\ldots, (S_\ell)_{s_\ell}),\  (\mathbf{U})_{\mathbf{u}}  = ((U_1)_{u_1},\ldots, (U_\ell)_{u_{\ell'}}) \in \text{OSP}(\Delta_{k,n})$, as constructed in the proof of Theorem \ref{thm: multisplit blade}, satisfying
	$$((1,2,\ldots, n))_{e_{I_1}} \cap \Delta_{k,n} = (((S_1)_{s_1},\ldots, (S_\ell)_{s_\ell}))\cap \Delta_{k,n}$$
	and
	$$((1,2,\ldots, n))_{e_{I_2}} \cap \Delta_{k,n} = (((U_1)_{u_1},\ldots, (U_{\ell'})_{u_{\ell'}}))\cap \Delta_{k,n}.$$
	Let $L = \{1,\ldots, \ell\}$ and $L' = \{1,\ldots, \ell'\}$ and let 
	$$K = \{(i,j)\in L\times L': \dim\left(\lbrack (S_i)_{s_i},\ldots, (S_{i-1})_{s_{i-1}}\rbrack\cap \lbrack ((U_j)_{u_j},\ldots, (U_{j-1})_{u_{j-1}})\rbrack\right) = n-1 \}$$
	be the indexing set for the maximal cells of the subdivision induced by refining the matroid subdivisions induced by respectively
	$$(((S_1)_{s_1},\ldots, (S_\ell)_{s_\ell})) \text{ and }  (((U_1)_{u_1},\ldots, (U_{\ell'})_{u_{\ell'}})).$$
	We claim that for each facet $\partial_{(\{j\},1)}(\Delta_{k,n})$, the polytopes 
	$$\{\lbrack ((S_a)_{s_a},\ldots, (S_{a-1})_{s_{a-1}})\rbrack \cap \lbrack ((U_b)_{u_b},\ldots, (U_{b-1})_{u_{b-1}})\rbrack: (a,b)\in K\}$$
	are the maximal cells of the subdivisions of $\Delta_{k,n}$ induced by $((1,2,\ldots, n))_{e_{I_1}},((1,2,\ldots, n))_{e_{I_2}}$	and that, in particular, these intersections are matroid polytopes.
	
	Now each blade induces separately a matroid subdivision of $\partial_{(\{j\},1)}(\Delta_{k,n})$, and by Lemma \ref{lem: to the boundary hypersimplex} we have
\begin{eqnarray}\label{eq:intersection1}
\partial_{(\{j\},1)}\left( ((1,2,\ldots, n))_{e_{I_1}}\cap \Delta_{k,n}\right) = ((1,2,\ldots, \widehat{j},\ldots, n))_{e_{I'_1}}\cap \partial_{(\{j\},1)}(\Delta_{k,n})
\end{eqnarray}
	and
\begin{eqnarray}\label{eq:intersection2}
\partial_{(\{j\},1)}\left( ((1,2,\ldots, n))_{e_{I_2}}\cap \Delta_{k,n}\right) = ((1,2,\ldots, \widehat{j},\ldots, n))_{e_{I'_2}}\cap \partial_{(\{j\},1)}(\Delta_{k,n}).
\end{eqnarray}

	Replacing $((T_1)_{t_1},(T_2)_{t_2},\ldots, (T_{\ell'})_{t_{\ell'}})$ in Corollary \ref{cor: subdivision arrangement boundary} with respectively 
		$$\lbrack (S_a)_{s_a},\ldots, (S_{a-1})_{s_{a-1}}\rbrack \text{ and } \lbrack (U_b)_{u_b},\ldots, (U_{b-1})_{u_{b-1}}\rbrack$$
	it follows that the maximal cells induced by the arrangements
	$$((1,2,\ldots, \widehat{j},\ldots, n))_{e_{I'_1}}\text{ and }((1,2,\ldots, \widehat{j},\ldots, n))_{e_{I'_2}}.$$
	are respectively
	$$\partial_{(\{j\},1)}\left(\lbrack (S_a)_{s_a},\ldots, (S_{a-1})_{s_{a-1}}\rbrack\right) \text{ and } \partial_{(\{j\},1)}\left(\lbrack (U_b)_{u_b},\ldots, (U_{b-1})_{u_{b-1}}\rbrack\right).$$
		But recalling that we have assumed $\{I_1,I_2\}$ to be weakly separated, then $\{I'_1,I'_2\}$ is weakly separated and it follows inductively that the blade arrangement 
	$$\{((1,2,\ldots, \widehat{j},\ldots, n))_{e_{I'_1}},((1,2,\ldots, \widehat{j},\ldots, n))_{e_{I'_2}}\}$$
	is matroidal, and that its maximal (matroid) cells are exactly the intersections
	$$ \partial_{(\{j\},1)}\left(\lbrack ((S_a)_{s_a},\ldots, (S_{a-1})_{s_{a-1}})\rbrack\right) \cap \partial_{(\{j\},1)}\left(\lbrack ((U_b)_{u_b},\ldots, (U_{b-1})_{u_{b-1}})\rbrack\right).$$	
	In view of the identity
	\begin{eqnarray*}
		&& \partial_{(\{j\},1)}\left(\lbrack ((S_a)_{s_a},\ldots, (S_{a-1})_{s_{a-1}})\rbrack\right) \cap \partial_{(\{j\},1)}\left(\lbrack ((U_b)_{u_b},\ldots, (U_{b-1})_{u_{b-1}})\rbrack\right)\\
		& = &  \partial_{(\{j\},1)}\left(\lbrack ((S_a)_{s_a},\ldots, (S_{a-1})_{s_{a-1}})\rbrack \cap \lbrack ((U_b)_{u_b},\ldots, (U_{b-1})_{u_{b-1}})\rbrack\right),
	\end{eqnarray*}
	repeating the argument for each $j=1,\ldots, n$, i.e. for all facets, using Proposition \ref{prop: secondhypersimplex test}, it now follows that the elements in the set 
		$$\{\lbrack ((S_a)_{s_a},\ldots, (S_{a-1})_{s_{a-1}})\rbrack \cap \lbrack ((U_b)_{u_b},\ldots, (U_{b-1})_{u_{b-1}})\rbrack: (a,b)\in K\}$$
	are matroid polytopes.
	
	\vspace{.3cm}
	
Conversely, if $\Pi_1,\ldots, \Pi_t$ are the maximal cells of the matroid subdivision of $\Delta_{k,n}$ that is induced by a given matroidal arrangement of the blade $((1,2,\ldots, n))$ on vertices $e_{I_1},e_{I_2}$, it follows (recall, we are assuming that $k\ge 3$) that every facet where $x_j=1$ for $j=1,\ldots, n$ of every $\Pi_i$ must also be a matroid polytope. 

Let us now assume to the contrary that $\{I_1,I_2\}$ is not weakly separated, i.e. we have
$$g_1,g_2\in I_1\ \text{ and } h_1,h_2 \in I_2$$
with, cyclically, $g_1<h_1<g_2<h_2$.

Now up to cyclic rotation, $I_1$ and $I_2$ have the form
$$I_1 = \{i_1,\ldots, g_1,\ldots, g_2,\ldots, i_k\},\ I_2 = \{j_1,\ldots, h_1,\ldots, h_2,\ldots, j_k\}.$$
 Choosing $j \in \{1,\ldots, n\}$ such that its cyclic successors in $I_1$ and $I_2$ are not, respectively, in $\{g_1,g_2\}$ and $\{h_1,h_2\}$, then applying the formula from Lemma \ref{lem: to the boundary hypersimplex}, we find that the pair $\{I_1', I_2'\}$ is still not weakly separated, so by induction the blade arrangement
$$\{((1,2,\ldots, \widehat{j},\ldots, n))e_{I'_{1}},((1,2,\ldots, \widehat{j},\ldots, n))e_{I'_{2}}\}$$
on $\partial_{(\{j\},1)}(\Delta_{k,n})\simeq\Delta_{k-1,n-1}$ is not matroidal, hence one of its maximal cells is not a matroid polytope.  But this is a contradiction, since the maximal cells in its induced subdivision are among the matroid polytopes 
$$\{\partial_{(\{j\},1)}(\Pi_1),\ldots, \partial_{(\{j\},1)}(\Pi_t)\}.$$

\end{proof}

Recall that the maximal cells of an arbitrary $\ell$-split matroid subdivision of $\Delta_{k,n}$ are $\ell$ nested matroid polytopes
$$\lbrack (S_1)_{s_1},(S_2)_{s_2},\ldots, (S_{\ell})_{s_\ell})\rbrack, \lbrack (S_2)_{s_2},(S_3)_{s_3}\ldots, (S_{1})_{s_1})\rbrack,\ldots, \lbrack (S_\ell)_{s_\ell},(S_{1})_{s_{1}}\ldots, (S_{\ell-1})_{s_{\ell-1}})\rbrack$$
for a decorated ordered set partition $((S_1)_{s_1},(S_2)_{s_2},\ldots, (S_{\ell})_{s_\ell})) \in \text{OSP}(\Delta_{k,n})$ that is uniquely determined up to cyclic block rotation.
\begin{cor}\label{cor:compatibility multi-splits}
	Consider an arbitrary pair $\Pi_1,\Pi_2$ of multi-splits of $\Delta_{k,n}$ with maximal cells the cyclic block rotations of the nested matroid polytopes
	$$\lbrack (S_i)_{s_i},(S_{i+1})_{s_{i+1}},\ldots, (S_{i-1})_{s_{i-1}}\rbrack,\ \ \lbrack (T_j)_{t_j},(T_{j+1})_{t_{j+1}},\ldots, (T_{j-1})_{t_{j-1}}\rbrack$$
	respectively, where 
	$$((S_1)_{s_1},\ldots, (S_{\ell})_{s_\ell}), ((S_1)_{s_1},\ldots, (S_{m})_{s_m}) \in \text{OSP}(\Delta_{k,n})$$
	are decorated ordered set partitions of hypersimplicial type $\Delta_{k,n}$.
	
	Suppose that there exists a cyclic order $\sigma = (\sigma_1,\ldots, \sigma_n)$, together with vertices $e_{J_1},e_{J_2}\in \Delta_{k,n}$ such that 
	$$(((S_1)_{s_1},\ldots, (S_{\ell})_{s_\ell})) \cap \Delta_{k,n} = ((\sigma_1,\ldots, \sigma_n))_{e_{J_1}} \cap \Delta_{k,n}$$
	and 
	$$(((T_1)_{t_1},\ldots, (T_{m})_{t_m})) \cap \Delta_{k,n} = ((\sigma_1,\ldots, \sigma_n))_{e_{J_2}} \cap \Delta_{k,n}.$$
	Then, the common refinement of $\Pi_1$ and $\Pi_2$ is matroidal if and only if the pair of vertices $e_{J_1},e_{J_2} \in\Delta_{k,n}$ is weakly separated with respect to the cyclic order $(\sigma_1,\ldots, \sigma_n)$.
	
\end{cor}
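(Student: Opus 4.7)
The plan is to reduce directly to Lemma \ref{lem: weakly separated matroid subdivision blade} by observing that the common refinement of $\Pi_1$ and $\Pi_2$ is, under the hypothesis, exactly the subdivision induced by the two-blade arrangement, and then by transporting the cyclic order $\sigma$ to the standard one $(1,2,\ldots,n)$ via relabeling.

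First I would note that the subdivision induced by a blade arrangement $\{\mathcal{B}_1,\mathcal{B}_2\}$ on $\Delta_{k,n}$ is by definition the common refinement of the individual subdivisions induced by $\mathcal{B}_1$ and by $\mathcal{B}_2$, since the union of the internal walls of both subdivisions cuts the hypersimplex into the maximal chambers of the refinement. Combining this with the hypothesis
$$(((S_1)_{s_1},\ldots,(S_\ell)_{s_\ell}))\cap \Delta_{k,n} = ((\sigma_1,\ldots,\sigma_n))_{e_{J_1}}\cap \Delta_{k,n},$$
$$(((T_1)_{t_1},\ldots,(T_m)_{t_m}))\cap \Delta_{k,n} = ((\sigma_1,\ldots,\sigma_n))_{e_{J_2}}\cap \Delta_{k,n},$$
the common refinement of $\Pi_1$ and $\Pi_2$ coincides with the subdivision of $\Delta_{k,n}$ induced by the blade arrangement $\{((\sigma_1,\ldots,\sigma_n))_{e_{J_1}},((\sigma_1,\ldots,\sigma_n))_{e_{J_2}}\}$.

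Next I would apply a relabeling. Let $\tau$ be the permutation of $\{1,\ldots,n\}$ defined by $\tau(\sigma_i)=i$; the induced coordinate permutation on $\mathbb{R}^n$ restricts to an affine isomorphism of $\Delta_{k,n}$ onto itself that sends $e_{J_a}$ to $e_{\tau(J_a)}$, carries matroid polytopes to matroid polytopes, and transforms the blade $((\sigma_1,\ldots,\sigma_n))_{e_{J_a}}$ to $((1,2,\ldots,n))_{e_{\tau(J_a)}}$. Since weak separation is defined purely in terms of cyclic order, the pair $\{J_1,J_2\}$ is weakly separated with respect to $\sigma$ if and only if $\{\tau(J_1),\tau(J_2)\}$ is weakly separated with respect to the natural cyclic order. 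Invoking Lemma \ref{lem: weakly separated matroid subdivision blade} for the pair $\{\tau(J_1),\tau(J_2)\}$ under $(1,2,\ldots,n)$, the refined arrangement is matroidal precisely when the pair is weakly separated, which completes the equivalence.

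The main issue to verify is that both sides of the desired equivalence are genuinely invariant under the relabeling $\tau$: matroidality is preserved because $\tau$ acts by a coordinate permutation, which is a symmetry of the hypersimplex and takes matroid polytopes to matroid polytopes; and weak separation relative to $\sigma$ is preserved because the definition only consults the cyclic order of the elements, which $\tau$ precisely normalizes to the standard one. Beyond this bookkeeping there is no additional content, so the corollary follows without a new geometric argument.
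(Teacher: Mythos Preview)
Your proof is correct and follows essentially the same approach as the paper: both reduce directly to Lemma~\ref{lem: weakly separated matroid subdivision blade} after noting that the common refinement of $\Pi_1$ and $\Pi_2$ coincides with the subdivision induced by the two-blade arrangement, and both normalize the cyclic order $\sigma$ to the standard one (the paper simply writes ``without loss of generality'' where you spell out the relabeling $\tau$). Your version is slightly more explicit about why the relabeling preserves matroidality and weak separation, but there is no substantive difference in strategy.
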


\begin{proof}
	This follows immediately from a translation of Lemma \ref{lem: weakly separated matroid subdivision blade}.
	
	Indeed, suppose that $\Pi_1,\Pi_2$ are multi-split matroid subdivisions induced by the blades
	$$((1,2,\ldots, n))_{e_{J_1}}\text{ and } ((1,2,\ldots, n))_{e_{J_2}},$$
	respectively, where without loss of generality we assume that the cyclic order is the standard one $\sigma = (1,2,\ldots, n)$, with $e_{J_1},e_{J_2}\in \Delta_{k,n}$.
	
	If the pair $\{e_{J_1},e_{J_2}\}$ is weakly separated, then Lemma \ref{lem: weakly separated matroid subdivision blade} implies that the arrangement 
	$$\left\{((1,2,\ldots, n))_{e_{J_1}}, ((1,2,\ldots, n))_{e_{J_2}}\right\}$$
	is matroidal, hence the common refinement of the two multi-splits $\Pi_1,\Pi_2$ is matroidal.  Conversely, if the blade arrangement is matroidal then it induces a subdivision that is matroidal; but this subdivision is the common refinement of $\Pi_1,\Pi_2$.
\end{proof}

\begin{rem}
	One can check that the arrangement of blades $\{((12_1 34_1 56_1)), ((12_1 56_1 34_1))\}$ on $\Delta_{3,6}$ is matroidal, that is, together they induce a subdivision of $\Delta_{3,6}$ that is positroidal; however this affirmative result would not follow from Corollary \ref{cor:compatibility multi-splits} because there is no single cyclic order compatible with both decorated ordered set partitions $(12_1 34_1 56_1)$ and $(12_1 56_1 34_1)$.  However, the analogous property does not hold in general for $k\ge 4$: the superposition of the blades $$\{((12_1 34_1 56_1 78_1)), ((12_1 78_1 56_1 34_1))\}$$ on $\Delta_{4,8}$ does not induce a matroid subdivision. 
\end{rem}

\begin{thm}\label{thm: weakly separated matroid subdivision blade}
	Given a collection of vertices $e_{I_1},e_{I_2},\ldots, e_{I_m}\in\Delta_{k,n}$, the blade arrangement 
$$\{((1,2,\ldots, n))_{e_{I_1}},((1,2,\ldots, n))_{e_{I_2}},\ldots, ((1,2,\ldots, n))_{e_{I_m}}\}$$
induces a matroid subdivision of $\Delta_{k,n}$ if and only if $\{I_1,\ldots, I_m\}$ is weakly separated.	
\end{thm}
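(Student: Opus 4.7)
The plan is to induct on $k\geq 2$, following the template of Lemma \ref{lem: weakly separated matroid subdivision blade}, which already handles $m=2$. The engine driving the induction is Proposition \ref{prop: secondhypersimplex test}, which for $k\geq 3$ reduces matroidality of a polytope $\Pi\subseteq \Delta_{k,n}$ to matroidality of each facet $\partial_{(\{j\},1)}(\Pi)\subseteq \Delta_{k-1,n-1}$, together with the restriction formula of Lemma \ref{lem: to the boundary hypersimplex}. For the base case $k=2$, each translated blade $((1,\ldots,n))_{e_{I_a}}$ with $|I_a|=2$ induces a 2-split of $\Delta_{2,n}$ by Theorem \ref{thm: multisplit blade}; Corollary \ref{cor:blade splits} combined with the $k=2$ step inside the proof of Lemma \ref{lem: weakly separated matroid subdivision blade} then shows that pairwise weak separation of $\{I_1,\ldots,I_m\}$ is equivalent to pairwise compatibility of these splits, which by the results of \cite{Hiroshi, HermannJoswig} is equivalent to the common refinement being a matroid subdivision.

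For the forward direction of the inductive step with $k\geq 3$, assume $\{I_1,\ldots,I_m\}$ is weakly separated. Fix $j\in\{1,\ldots,n\}$ and restrict the arrangement to $\partial_{(\{j\},1)}(\Delta_{k,n})\simeq \Delta_{k-1,n-1}$. By Lemma \ref{lem: to the boundary hypersimplex}, the restricted arrangement is of the form $\{((1,\ldots,\widehat{j},\ldots,n))_{e_{I_a'}}\}_{a=1}^m$, where each $I_a'$ is obtained from $I_a$ by removing a single element. By Remark \ref{rem:subsetweaklyseparable} the resulting collection is still weakly separated (after discarding blades on frozen vertices, which induce the trivial subdivision, and removing duplicates, which do not alter the induced subdivision). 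The inductive hypothesis produces a matroidal arrangement on the facet; running this over all $j=1,\ldots,n$ and applying Proposition \ref{prop: secondhypersimplex test} cell by cell shows that every maximal cell of the original $m$-blade arrangement is a matroid polytope.

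For the reverse direction, argue contrapositively. If $\{I_1,\ldots,I_m\}$ is not weakly separated, some pair $\{I_a,I_b\}$ admits a witness $g_1<h_1<g_2<h_2$ with $g_1,g_2\in I_a\setminus I_b$ and $h_1,h_2\in I_b\setminus I_a$. As in the reverse direction of Lemma \ref{lem: weakly separated matroid subdivision blade}, choose $j\notin\{g_1,g_2,h_1,h_2\}$ whose cyclic successor in neither $I_a$ nor $I_b$ lies in $\{g_1,g_2,h_1,h_2\}$; since $n\geq k+2$ such a $j$ always exists. Then the restricted pair $\{I_a',I_b'\}$ retains the witness quadruple in the same cyclic order and so remains non-weakly-separated, so by induction the restricted $m$-blade arrangement on $\partial_{(\{j\},1)}(\Delta_{k,n})$ is not matroidal. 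Its non-matroid maximal cell arises as $\partial_{(\{j\},1)}(\Pi)$ for a maximal cell $\Pi$ of the original arrangement, which would then fail to be a matroid polytope by the face-hereditary property used in Proposition \ref{prop: secondhypersimplex test}, contradicting matroidality of the original arrangement.

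The main obstacle is the combinatorial choice of $j$ in the reverse step: one must ensure that restricting to $x_j=1$ does not erase the non-weak-separation witness in the passage $\{I_a,I_b\}\mapsto\{I_a',I_b'\}$, while simultaneously not collapsing either of these subsets to a frozen cyclic interval. The bound $n\geq k+2$ provides enough room to make this choice, and once it is made the argument reduces to a straightforward multi-blade extension of Lemma \ref{lem: weakly separated matroid subdivision blade}.
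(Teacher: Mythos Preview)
Your approach is correct but takes a different route from the paper's proof. You run a single induction on $k$ for the full $m$-blade statement, with the base case $k=2$ handled by citing the split-compatibility theory of \cite{Hiroshi,HermannJoswig} for $\Delta_{2,n}$ and general $m$. The paper instead leverages Lemma~\ref{lem: weakly separated matroid subdivision blade} (which already did the induction on $k$ for $m=2$) and then passes from $m=2$ to arbitrary $m$ by a direct octahedral-face argument, with no further induction: on any octahedral face $\mathcal{F}_{(A,B),n}\simeq\Delta_{2,4}$, each blade induces either the trivial subdivision or a planar $2$-split, and pairwise matroidality forces all nontrivial ones to agree; hence no octahedral face can be cut into its full alcove triangulation, and the subdivision is matroidal.

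The trade-off is this. Your induction is more uniform, but your base case needs the statement that an arbitrary collection of pairwise compatible $2$-splits of $\Delta_{2,n}$ has a matroidal common refinement; this is true and well known via the tree-metric correspondence, but it is not quite what Corollary~\ref{cor:blade splits} or the $k=2$ step of Lemma~\ref{lem: weakly separated matroid subdivision blade} say (both treat only pairs), so you are leaning on the cited references somewhat more heavily than the paper does. The paper's octahedral argument sidesteps this entirely, since it only ever needs the pairwise criterion and reduces matroidality to a statement about individual $\Delta_{2,4}$ faces. Both the forward and reverse parts of your inductive step are sound: in the forward direction, each $\partial_{(\{j\},1)}(\Pi)$ is a cell (possibly non-maximal) of the restricted subdivision, hence a matroid polytope by induction and face-heredity, so Proposition~\ref{prop: secondhypersimplex test} applies; in the reverse direction, a non-matroid maximal cell of the restricted arrangement is indeed $\partial_{(\{j\},1)}(\Pi)$ for some maximal $\Pi$ upstairs, forcing $\Pi$ to be non-matroid.
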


\begin{proof}
	Let $\{I_1,\ldots, I_m\}\subset \binom{\lbrack n \rbrack}{k}$  be arbitrary.  The only remaining part of the proof here is to show that if each arrangement
	$$\{((1,2,\ldots,n))_{e_{I_p}},((1,2,\ldots,n))_{e_{I_q}}\}$$
	is matroidal for each (distinct) $p,q$, then the whole arrangement on the vertices $e_{I_1},\ldots, e_{I_m}$ is matroidal.
	
	Let us now suppose that $\{I_1,\ldots, I_m\}$ is weakly separated.
	
	Choose any octahedral face 
	$$\mathcal{F}_{(A,B),n} = \partial_{(A,1)}\left(\partial_{(B,0)}(\Delta_{k,n})\right) = \left\{x\in \Delta_{k,n}: x_a= 1\text{ and }x_b=0 \text{ for all } a\in A,b\in B\right\}$$
	of the hypersimplex $\Delta_{k,n}$, where $A\in \binom{\lbrack n\rbrack}{k-2}$ and $B\in\binom{\lbrack n\rbrack}{n-4-(k-2)}$ are disjoint.  Now it follows from Lemma \ref{lem: weakly separated matroid subdivision blade} that for each (distinct) $p,q\in \{1,\ldots, m\}$, the blade arrangement 
	$$\{((1,2,\ldots,n))_{e_{I_p}},((1,2,\ldots,n))_{e_{I_q}}\}$$
	is matroidal, so it induces either the trivial subdivision, or a (planar) 2-split matroid subdivision $\Sigma_{(A,B)}$ on $\mathcal{F}_{(A,B),n}$.  Consequently each blade $((1,2,\ldots, n))_{e_{I_p}}$ for $p=1,\ldots, m$ induces on $\mathcal{F}_{(A,B),n}$ either the trivial subdivision or $\Sigma_{(A,B)}$.  
	
	Supposing to the contrary that the whole blade arrangement 
	$$\{((1,2,\ldots, n))_{e_{I_1}},((1,2,\ldots, n))_{e_{I_2}},\ldots, ((1,2,\ldots, n))_{e_{I_m}}\}$$
	were not matroidal, then some octahedral face would have to be subdivided into the full alcove triangulation; that is it would be split in two different (nontrivial) ways by some pair 
	$$\{((1,2,\ldots, n))_{e_{I_{p'}}},((1,2,\ldots, n))_{e_{I_{q'}}}\},$$
	which is a contradiction.
\end{proof}

\begin{example}
	Consider the non-weakly separated pair of vertices $e_{124},e_{135}$ of $\Delta_{3,6}$.  Then we may choose either $j=1$ or $j=6$ for the boundary, in the proof of Theorem \ref{thm: weakly separated matroid subdivision blade}, and end up with the same non-weakly separated pair of vertices $\{e_{35},e_{24}\}$ of $\Delta_{2,5}$.
\end{example}

\begin{rem}
	In light of Theorem \ref{thm: weakly separated matroid subdivision blade}, for matroidal blade arrangements, the $n$ ``frozen'' vertices which have cyclically adjacent indices, do not change the subdivision of $\Delta_{k,n}$.  It follows from Theorem \ref{thm: weakly separated matroid subdivision blade} together with the purity conjecture (for $k$-element subsets), proven independently in \cite{Danilov,Danilov2,WeakSeparationPostnikov} that all maximal by inclusion weakly separated collections of vertices of $\Delta_{k,n}$ have the same cardinality $(k-1)(n-k-1)+n$, so that for general $\Delta_{k,n}$, each maximal (by refinement, and by size) matroidal blade arrangement consists of $(k-1)(n-k-1)$ copies of the blade $((1,2,\ldots, n))$ arranged on a weakly separated collection of vertices of $\Delta_{k,n}$.
\end{rem}

In the final two examples we aim to illustrate the nontriviality of our criterion in Corollary \ref{cor:compatibility multi-splits} even for 2-splits; more examples follow in Section \ref{sec: tree arrangements}.

\begin{example} 
	For $n=7$, from the (non-)weakly separated pair
	$$\mathcal{W} = \{e_{124},e_{135}\}$$
	we obtain the blade arrangement
	$$\{((12567_2 34_1)),((167_1 23_1 45_1))\}.$$
	It is not matroidal, since one of the maximal cells, the alcoved subpolytope of $\Delta_{3,7}$
	$$\Pi=\{x\in \Delta_{3,7}:x_{12567}\ge 2,\ x_{167}\ge 1,\ x_{16723}\ge 2\}$$
	is not a matroid polytope.  This could be seen by explicit computation of the convex hull using a software package: the edge connecting the vertices $e_{124}, e_{135}$ is an edge of $\Pi$ and it is in the non root direction 
	$$e_{124}- e_{135}=e_{24} - e_{35}.$$
	Equivalently, more practically, one can check explicitly that the matroid basis exchange relations fail (for instance) for the pair of bases $\{1,2,4\},\{1,3,5\}$ of the (candidate) matroid corresponding to $\Pi$.
\end{example}

\begin{example}
	The weakly separated pair $\{e_{125},e_{134}\}$ induces a matroidal blade arrangement:
	$$((126_2 345_1)),((156_1 234_2)).$$
	These induce compatible splits.  On the other hand, a blade arranged on the non-weakly separated arrangement of vertices $\{e_{124},e_{235}\}$, induces a subdivision which is not matroidal:
	$$\{((1256_2 34_1)),((1236_2 45_1))\}$$
	The difference between these two pairs of splits is not obvious from their equations alone, though the criterion is known for 2-splits of any hypersimplex \cite{HermannJoswig}, but here the criterion (for planar matroid subdivisions), that the vertices of a matroidal blade arrangement define a weakly separated collection, is purely combinatorial and as we have seen in Corollary \ref{cor:compatibility multi-splits} extends naturally to multisplits.
		
\end{example}
\section{From matroid subdivisions to their boundaries: tree arrangements}\label{sec: tree arrangements}

In this section we present an extended example, in which we work out in detail how Lemma \ref{lem: to the boundary hypersimplex} may be used to pass from (1) a maximal weakly separated collection of 3-element subsets of vertices of $\Delta_{3,7}$ through (2) a maximal matroidal blade arrangement, to obtain (3) an arrangement of $7$ trees which are dual to the respective matroid subdivisions induced on the $n$ boundary copies of $\Delta_{2,6}$.  

However, we remark that using Lemma \ref{lem: to the boundary hypersimplex} one can pass directly from a weakly separated collection of vertices on $\Delta_{k,n}$ to a weakly separated collection of vertices on $\Delta_{k-1,n-1}$.
\begin{figure}[h!]
	\centering
	\includegraphics[width=0.7\linewidth]{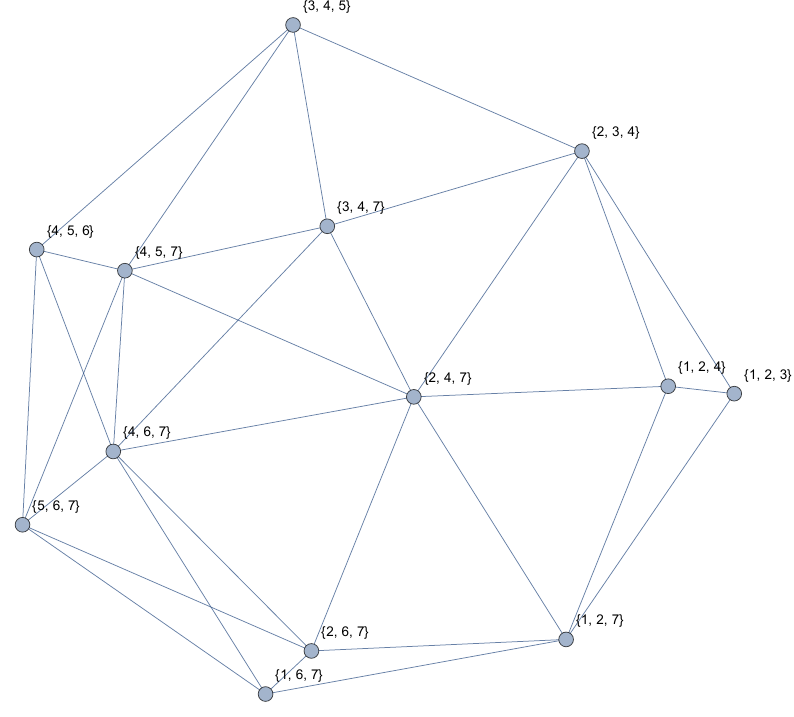}
	\caption{Locations of blades in the arrangement $$\mathcal{C}\cup \{e_{123},e_{234},\ldots, e_{712}\}=\left\{e_{124},e_{247},e_{267},e_{347},e_{457},e_{467},e_{123},e_{234},\ldots, e_{712}\right\}$$ in the hypersimplex $\Delta_{3,7}$.  Thus, in the graph, which is shown embedded in the 1-skeleton of $\Delta_{k,n}$, two vertices are connected by an edge if and only if they differ by a root $e_i-e_j$.  See also \cite{GalashinPostnikovWilliams}.}
	\label{fig:graph37collectionplabicgraph}
\end{figure}

With respect to the given cyclic order $\sigma = ((1,2,3,4,5,6,7))$, the weakly separated collection $\mathcal{C} = \left\{e_{124},e_{247},e_{267},e_{347},e_{457},e_{467}\right\}$ of vertices of $\Delta_{3,7}$ (see Figure \ref{fig:graph37collectionplabicgraph}) determines a collection of decorated ordered set partitions; we shall compute the matroid subdivisions induced by the corresponding blades on the seven boundary copies of $\Delta_{2,6}\simeq  \partial_{(\{j\},1)}(\Delta_{3,7}) \subset \Delta_{3,7}$ and then dualize to get a collection of planar trees.

Denote $\beta_J = ((1,2,\ldots, n))_{e_J}$ for $J\in \binom{\lbrack n\rbrack}{k}$. 

The matroid subdivision corresponding to $\mathcal{C}$ is induced by the superposition of the following blades:
\begin{eqnarray}\label{eqn:collection37}
	\beta_{124}\cap \Delta_{3,7} & = & ((12567_2 34_1)) \cap \Delta_{3,7}\nonumber\\
	\beta_{247}\cap \Delta_{3,7} & = & ((12_1 34_1 567_1)) \cap \Delta_{3,7}\nonumber\\
	\beta_{267}\cap \Delta_{3,7} & = & ((12_1 34567_2)) \cap \Delta_{3,7}\\
	\beta_{347}\cap \Delta_{3,7} & = & ((1234_2 567_1)) \cap \Delta_{3,7}\nonumber\\
	\beta_{457}\cap \Delta_{3,7} & = & ((12345_2 67_1)) \cap \Delta_{3,7}\nonumber\\
	\beta_{467}\cap \Delta_{3,7} & = & ((1234_1 567_2)) \cap \Delta_{3,7}\nonumber.
\end{eqnarray}

We obtain via Lemma \ref{lem: to the boundary hypersimplex} the seven blade arrangements:
\begin{eqnarray*}
	\partial_1(\mathcal{C}) & = & \{e_{24},e_{47},e_{57}\} \mapsto \{((2567_1 34_1)),((234_1 567_1)),((2345_1 67_1))\} \\
	\partial_2(\mathcal{C}) & = & \{e_{14},e_{47},e_{57}\} \mapsto \{((1567_1 34_1)),((134_1 567_1)), ((1345_1 67_1))\} \\
	\partial_3(\mathcal{C}) & = & \{e_{27},e_{47},e_{57}\} \mapsto \{((12_1 4567_1)),((124_1 567_1)),((1245_1 67_1))\} \\
	\partial_4(\mathcal{C}) & = & \{e_{27},e_{37},e_{57}\} \mapsto \{((12_1 3567_1)),((123_1 567_1)),((1235_1 67_1))\}\\
	\partial_5(\mathcal{C}) & = & \{e_{24},e_{27},e_{47}\} \mapsto \{((1267_1 34_1)),((12_1 3467_1)),((1234_1 67_1))\}\\
	\partial_6(\mathcal{C}) & = & \{e_{24},e_{27},e_{47}\} \mapsto \{((1257_1 34_1)),((12_1 3457_1)),((1234_1 57_1))\}   \\
	\partial_7(\mathcal{C}) & = & \{e_{24},e_{26},e_{46}\} \mapsto \{((1256_1 34_1)),((12_1 3456_1)),((1234_1 56_1))\}.
\end{eqnarray*}
Note again that one may use Lemma \ref{lem: to the boundary hypersimplex} to pass directly from the weakly separated collection $$\mathcal{C} = \left\{e_{124},e_{247},e_{267},e_{347},e_{457},e_{467}\right\}$$ of vertices of $\Delta_{3,7}$ to seven weakly separated collections of vertices of $\Delta_{2,6}$, after deleting frozen and redundant vertices.

These can in turn be represented by the arrangement of seven trees in Figure \ref{fig:tree-arrangement}.  The three 2-block set partitions in each line above are determined by the internal edges of the corresponding tree below, as can be seen directly by inspection.

\begin{figure}[h!]
	\centering
	\includegraphics[width=0.8\linewidth]{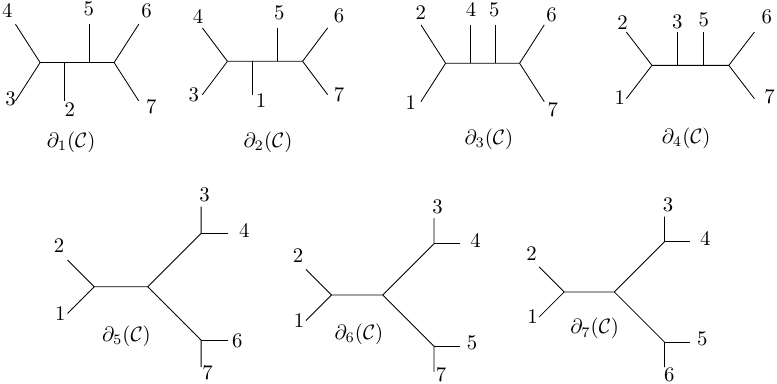}
	\caption{Matroid subdivision induced by the blade arrangement of Equation \eqref{eqn:collection37}, represented as an arrangement of trees dual to the matroid subdivisions induced on the seven boundary copies of $\Delta_{2,6}$.}
	\label{fig:tree-arrangement}
\end{figure}

In Examples \ref{example: D48} and \ref{example: bicolored trees} we summarize some preliminary findings for $\Delta_{4,8}$, derived by enumerating weakly separated collections of vertices.
\begin{example}\label{example: D48}
	Note that for $\Delta_{4,8}$, where there are $\binom{8}{4}-8 = 62$ nonfrozen vertices, maximal weakly separated collections of vertices have $(4-1)(8-4-1)=9$ elements. 
	
	We find 1048 weakly separated pairs.  The new feature at $k=4$ is the 4-split induced by the blade $\beta_{e_{2468}}$. 
	
	One interesting feature is that exactly 24 such pairs contain the vertex $e_{2468}$, while analogously for $(3,6)$ there are 6 weakly separated pairs containing the vertex $e_{246}$.  
	 
Below we give one of the more complex matroidal blade arrangements, derived from the maximal weakly separated collection of vertices 
$$\left\{e_{1248},e_{1268},e_{1468},e_{2348},e_{2468},e_{3458},e_{3468},e_{4568},e_{4678}\right\}.$$
From this we obtain the maximal matroidal blade arrangement (here ``$\equiv $'' means that the two sides agree when intersected with the hypersimplex $\Delta_{4,8}$):
 \begin{eqnarray*}
	\beta_{1248}  & \equiv & ((125678_3 34_1))\\
	\beta_{1268} & \equiv & ((1278_3 3456_1))\\
	\beta_{1468} & \equiv & ((178_1 234_1 56_1))\\
	\beta_{2348} & \equiv & ((1234_3 5678_1))\\
	\beta_{2468} & \equiv & ((12_1 34_1 56_1 78_1))\\
	\beta_{3458} & \equiv & ((12345_3 678_1))\\
	\beta_{3468} & \equiv & ((1234_2 56_1 78_1))\\
	\beta_{4568} & \equiv & ((123456_3 78_1))\\
	\beta_{4678} & \equiv & ((1234_1 5678_3)).
\end{eqnarray*}

\end{example}

\begin{rem}
	Another new feature of $k=4$: while for $\Delta_{3,6}$ the superposition of the blades $((12_1 34_1 56_1))$ and $((12_1 56_1 34_1))$ induce a matroid subdivision, for $\Delta_{4,8}$ the analogous property does not hold: by directly checking the basis exchange relations on the maximal cells, one can verify that the blades 
	$((12_1 34_1 56_12 78_1)$ and $((12_1 78_1 56_1 34_1))$ induce a subdivision that is \textit{not} matroidal.
\end{rem}
We conclude with one of the simplest maximal matroidal blade arrangements: here each blade $((1,2,\ldots, n))_{e_J}$ induces a 2-split of $\Delta_{4,8}$.

\begin{example}\label{example: bicolored trees}
	Let
	$$\mathcal{C} = \left\{e_{2678}, e_{3678},e_{2378},e_{4678},e_{3478},e_{2348}, e_{4578},e_{3458}, e_{4568}\right\}.$$
	This corresponds to the collection of blades
	\begin{eqnarray*}
		&((12_1 345678_3)), ((123_1 45678_3)), ((123_2 45678_2)), ((1234_1 5678_3)),((1234_2 5678_2)), ((1234_3 5678_1)),&\\
		&((12345_2 678_2)),((12345_3 678_1)),((123456_3 78_1)).&
	\end{eqnarray*}
\end{example}

\section{Acknowledgements}
We thank A. Guevara, S. Mizera and J. Tevelev for helpful comments on the final draft, F. Borges and F. Cachazo for many discussions about collections of Feynman diagrams and related topics, and D. Speyer for comments and for pointing out a typo in Corollary \ref{cor:blade splits}.  We also thank the referees for a careful reading.

This research was supported in part by Perimeter Institute for Theoretical Physics. Research at Perimeter Institute is supported by the Government of Canada through the Department of Innovation, Science and Economic Development Canada and by the Province of Ontario through the Ministry of Research, Innovation and Science.

\appendix
\section{Enumerating matroidal blade arrangements}

We enumerated maximal weakly separated collections, (that is, the number of maximal matroidal blade arrangements on $\Delta_{k,n}$) in Mathematica with the help of the FindClique algorithm; the counts are given in the table below, for rows with $n=4,5,\ldots, 12$ and columns with $k=2,3,\ldots, n-2$.

$$\begin{tabular}{c|ccccccccc}
$n\setminus k$ &&&&&&&&\\
\hline 
4 & 2 &  &  &  &  &  &  & & \\
5  & 5 & 5 &  &  &  &  &  &  &\\
6 & 14 & 34 & 14 &  &  &  &  &  & \\
7 & 42 & 259 & 259 & 42 &  &  &  &  & \\
8 & 132 & 2136 & 5470 & 2136 & 132 &  &  & &  \\
9 & 429 & 18600 & 122361 & 122361 & 18600 & 429 & & &  \\
10 & 1430 & 168565 & 2889186 & 7589732 & 2889186 & 168565 & 1430 & & \\
11 & 4862 & 1574298 &71084299 & & &71084299 & 1574298 & 4862 & \\
12 & 16796 & 15051702 & & & & & & 15051702 & 16796
\end{tabular} $$

Likewise, the number of maximal weakly separated collections containing only subsets with two cyclic intervals (that is, the number of maximal matroidal blade arrangements on $\Delta_{k,n}$ where each blade induces a 2-split) is given below.  
$$\begin{tabular}{c|cccccccccccc}
$n\setminus k$ &&&&&&&&\\
\hline 
4 &  2 &  &  &  &  &  &  &  &  \\
5&5 & 5 &  &  &  &  &  &  &  \\
6&14 & 18 & 14 &  &  &  &  &  &  \\
7&42 & 63 & 63 & 42 &  &  &  &  &  \\
8&132 & 216 & 328 & 216 & 132 &  &  &  &  \\
9&429 & 729 & 1683 & 1683 & 729 & 429 &  &  &  \\
10&1430 & 2430 & 8530 & 13020 & 8530 & 2430 & 1430 &  &  \\
11&4862 & 8019 & 42801 & 99825 & 99825 & 42801 & 8019 & 4862 &  \\
12&16796 & 26244 & 212988 & 759204 & 1161972 & 759204 & 212988 & 26244 & 16796 \\
\end{tabular} $$

%
%


\begin{thebibliography}{}
		\bibitem{ArdilaBilley} F. Ardila and S. Billey. ``Flag arrangements and triangulations of products of simplices.'' Advances in Mathematics 214, no. 2 (2007): 495-524.
		\bibitem{worldsheetABHY} N. Arkani-Hamed, Y. Bai, S. He, and G. Yan. ``Scattering forms and the positive geometry of kinematics, color and the worldsheet.'' Journal of High Energy Physics 2018, no. 5 (2018): 96.
		\bibitem{BandeltDress} H-J. Bandelt and A. Dress. ``A canonical decomposition theory for metrics on a finite set.'' Advances in mathematics 92, no. 1 (1992): 47-105.
		\bibitem{CachazoBorges} F. Borges and F. Cachazo. ``Generalized Planar Feynman Diagrams: Collections.''  arXiv preprint arXiv:1910.10674 (2019).
		\bibitem{CGUZ} F. Cachazo, A. Guevara, B. Umbert, and Y. Zhang. ``Planar matrices and arrays of Feynman diagrams.'' arXiv preprint arXiv:1912.09422 (2019).
		\bibitem{DeltaAlgebra} F. Cachazo, N. Early, A. Guevara and S. Mizera. ``$\Delta$-algebra and scattering amplitudes.'' Journal of High Energy Physics 2019, no. 2 (2019): 5.
		\bibitem{CHY1} F. Cachazo, S. He, and E. Yuan. ``Scattering of massless particles in arbitrary dimensions.'' Physical review letters 113, no. 17 (2014): 171601.
		\bibitem{Cachazo Scattering 2 CHY} F. Cachazo, S. He, and E. Yuan. ``Scattering of massless particles: scalars, gluons and gravitons.'' Journal of High Energy Physics 2014, no. 7 (2014): 33.
		\bibitem{CHYQFTs} F. Cachazo, S. He, and E. Yuan. ``Scattering equations and matrices: from Einstein to Yang-Mills, DBI and NLSM.'' Journal of High Energy Physics 2015, no. 7 (2015): 1-43.
		\bibitem{CachazoRojas} F. Cachazo and J.  Rojas. ``Notes on Biadjoint Amplitudes, ${\rm Trop}\, G (3, 7) $ and $ X (3, 7) $ Scattering Equations.'' arXiv preprint arXiv:1906.05979 (2019).
		\bibitem{TropGrassmannianScattering} F. Cachazo, N. Early, A. Guevara, and S. Mizera. ``Scattering equations: from projective spaces to tropical grassmannians.'' Journal of High Energy Physics 2019, no. 6 (2019): 39.
		\bibitem{Danilov} V. Danilov, A. Karzanov, and G. Koshevoy. ``On maximal weakly separated set-systems.'' Journal of Algebraic Combinatorics 32, no. 4 (2010): 497-531.
		\bibitem{Danilov2} V. Danilov, A. Karzanov, and G. Koshevoy. ``Combined tilings and separated set-systems.'' Selecta Mathematica 23, no. 2 (2017): 1175-1203.
		\bibitem{TropicalConvexity} M. Develin and B. Sturmfels. ``Tropical convexity.'' Doc. Math 9, no. 1-27 (2004): 7-8.
		\bibitem{TropicalGrassmannianCluster Drummond} J. Drummond, J. Foster, \"{O} G\"{u}rdo\v{g}an, and C. Kalousios. ``Tropical Grassmannians, cluster algebras and scattering amplitudes.'' arXiv preprint arXiv:1907.01053 (2019).
		\bibitem{EarlyHStar} N. Early. ``Conjectures for Ehrhart $ h^* $-vectors of Hypersimplices and Dilated Simplices.'' arXiv preprint arXiv:1710.09507 (2017).
		\bibitem{EarlyBlades} N. Early. ``Honeycomb tessellations and canonical bases for permutohedral blades.'' arXiv preprint arXiv:1810.03246 (2018).
		\bibitem{EarlyReiner} N. Early and V. Reiner. ``On configuration spaces and Whitehouse's lifts of the Eulerian representations.'' Journal of Pure and Applied Algebra (2019).
		\bibitem{EarlyPlanarBasis} N. Early. ``Planar kinematic invariants, matroid subdivisions and generalized Feynman diagrams.'' arXiv preprint arXiv:1912.13513 (2019).		
		\bibitem{FraserLe} C. Fraser and I. Le. ``Tropicalization of Positive Grassmannians.'' arXiv preprint arXiv:1710.05014 (2017).
		\bibitem{GalashinPostnikov} P. Galashin and A. Postnikov. ``Purity and separation for oriented matroids.'' arXiv preprint arXiv:1708.01329 (2017).
		\bibitem{Galashin Zonotopal Tilings} P. Galashin. ``Plabic graphs and zonotopal tilings.'' Proceedings of the London Mathematical Society 117, no. 4 (2018): 661-681.
		\bibitem{GalashinPostnikovWilliams} 	P. Galashin, A. Postnikov and L. Williams.  ``Higher secondary polytopes and regular plabic graphs.''  arXiv preprint arXiv:1909.05435 (2019).
		\bibitem{SoftTheoremtropGrassmannian} D. Garcia-Sep\'{u}lveda and A. Guevara. ``A Soft Theorem for the Tropical Grassmannian.'' arXiv preprint arXiv:1909.05291 (2019).
		\bibitem{HermannJoswig} S. Herrmann and M. Joswig. ``Splitting polytopes.'' M\"{u}nster J. Math 1 (2008): 109-141.
		\bibitem{Hermann How to draw} S. Herrmann, A. Jensen, M. Joswig, and B. Sturmfels. ``How to draw tropical planes.'' the electronic journal of combinatorics 16, no. 2 (2009): 6.
		\bibitem{SongAmplitudes} S. He, L. Ren, and Y. Zhang. ``Notes on polytopes, amplitudes and boundary configurations for Grassmannian string integrals.'' Journal of High Energy Physics 2020, no. 4 (2020): 1-38.
		\bibitem{Hiroshi} H. Hirai. ``A geometric study of the split decomposition.'' Discrete \& Computational Geometry 36, no. 2 (2006): 331-361.
		\bibitem{Horn} S. Horn.  ``A Topological Representation Theorem for tropical oriented matroids.'' Journal of Combinatorial Theory, Series A 142 (2016): 77-112.
		\bibitem{KarpmanKodama} R. Karpman and Y. Kodama. ``Triangulations and soliton graphs for totally positive Grassmannian.'' arXiv preprint arXiv:1808.01587 (2018).
		\bibitem{DonghyunKim} D. Kim. ``A combinatorial formula for the Ehrhart $ h^{*} $-vector of the hypersimplex.'' arXiv preprint arXiv:1810.02255 (2018).
		\bibitem{Lafforgue} L. Lafforgue, ``Chirurgie des grassmanniennes,'' CRM Monograph Series,Volume: 19; 2003
		\bibitem{AlcovedPolytopes} T. Lam  and A. Postnikov. ``Alcoved polytopes, i.'' Discrete \& Computational Geometry 38, no. 3 (2007): 453-478.
		\bibitem{LamPostnikov} T. Lam, and A. Postnikov. ``Polypositroids.'' arXiv preprint arXiv:2010.07120 (2020).
		\bibitem{LeclercZelevinsky} B. Leclerc and A. Zelevinsky. ``Quasicommuting families of quantum plucker coordinates.'' In Kirillov's seminar on representation theory, vol. 35, p. 85. 1998.
		\bibitem{Maurer} S. Maurer. ``Matroid basis graphs. I.'' Journal of Combinatorial Theory, Series B 14, no. 3 (1973): 216-240.
		\bibitem{OcneanuVideo} A. Ocneanu.  ``Higher representation theory.'' Harvard Physics 267, Lecture 34.  \url{https://youtu.be/9gHzFLfPFFU}, 2017.
		\bibitem{OlarteLocalDressians} J. Olarte, M. Panizzut, and B. Schr\"{o}ter. ``On local Dressians of matroids.'' In Algebraic And Geometric Combinatorics On Lattice Polytopes-Proceedings Of The Summer Workshop On Lattice Polytopes, p. 309. World Scientific, 2019.
		\bibitem{WeakSeparationPostnikov} S. Oh, A. Postnikov and D. E. Speyer. ``Weak separation and plabic graphs.'' Proceedings of the London Mathematical Society 110, no. 3 (2015): 721-754.
		\bibitem{PostnikovPermutohedra} A. Postnikov. ``Permutohedra, associahedra, and beyond.'' International Mathematics Research Notices 2009.6 (2009): 1026-1106.
		\bibitem{Schroter} B. Schr\"{o}ter. Matroidal Subdivisions, Dressians and Tropical Grassmannians. PhD thesis, Technische. Universit\"{a}t Berlin (2018)
		\bibitem{SchroterMultisplits} B. Schr\"{o}ter. ``Multi-splits and tropical linear spaces from nested matroids.'' Discrete \& Computational Geometry 61, no. 3 (2019): 661-685.
		\bibitem{SpeyerTropGrass} D. Speyer and B. Sturmfels. ``The tropical grassmannian.'' Advances in Geometry 4, no. 3 (2004): 389-411.
		\bibitem{SpeyerWilliams} D. Speyer and L. Williams. ``The tropical totally positive Grassmannian.'' Journal of Algebraic Combinatorics 22, no. 2 (2005): 189-210.
		\bibitem{Speyer2008} D. Speyer. ``Tropical linear spaces.''SIAM Journal on Discrete Mathematics 22, no. 4 (2008): 1527-1558.
		\bibitem{Speyer2009} D. Speyer. ``A matroid invariant via the K-theory of the Grassmannian.'' Advances in Mathematics 221, no. 3 (2009): 882-913.
		\bibitem{PitmanStanley} R. Stanley and J. Pitman. ``A polytope related to empirical distributions, plane trees, parking functions, and the associahedron.'' Discrete \& Computational Geometry 27, no. 4 (2002): 603-602.
		\bibitem{Sturmfels Speyer Tropical intro} D. Speyer and B. Sturmfels. ``Tropical mathematics.'' Mathematics Magazine 82, no. 3 (2009): 163-173.	
	\end{thebibliography}
\end{document}